\newtheorem{theo}{Theorem}[section]
\newtheorem{lemma}[theo]{Lemma}
\newtheorem{coro}[theo]{Corollary}
\newtheorem{prop}[theo]{Proposition}
\theoremstyle{remark}
\numberwithin{equation}{section}
\def\N{\mathbb{N}}
\def\t{\theta}
\def\ut{\underline{\theta}}
\def\eqsp{\;}
\def\Xset{\mathsf{X}} 
\def\Xsigma{\mathcal{X}} 
\def\Rset{\mathbb R}
\def\tv{\mathrm{TV}} 
\def\hatH{\widehat{H}}
\def\PP{\mathbb{P}} 
\def\PE{\mathbb{E}} \def\F{\mathcal{F}}
\def\g{\gamma_{T_k}}
\newcommand{\eqdef}{\ensuremath{\stackrel{\mathrm{def}}{=}}}
\newcommand{\aslim}{\ensuremath{\stackrel{\mathrm{a.s.}}{\longrightarrow}}}
\newcommand{\un}{\ensuremath{\mathbbm{1}}}
\newcommand{\pscal}[2]{\langle #1, #2 \rangle}
\DeclareMathAlphabet{\mathpzc}{OT1}{pzc}{m}{it}
\newcommand{\dps}{\displaystyle }
\newcommand{\eps}{\varepsilon}
\renewcommand{\leq}{\leqslant}
\renewcommand{\geq}{\geqslant}
\newcounter{hypoconbis}
\newcounter{saveconbis}
\newcommand\debutA{\begin{list} {\textbf{A\arabic{hypoconbis}}}{\usecounter{hypoconbis}}\setcounter{hypoconbis}{\value{saveconbis}}}
\newcommand\finA{\end{list}\setcounter{saveconbis}{\value{hypoconbis}}}
   {\ \\ {\bf Proof #1. }}%
   {\hfill\mbox{\rule{2 true mm}{3 true mm}}\vskip 2 ex\noindent}
\begin{document}

\title{Convergence of the Wang-Landau algorithm}
\author{ G. Fort$^1$, B. Jourdain$^2$, E. Kuhn$^3$, T Leli\`evre$^{2,4}$ and G. Stoltz$^{2,4}$ \\
\footnotesize{1: LTCI, CNRS \& Telecom ParisTech, 46 rue Barrault, 75634 Paris Cedex 13, France} \\
\footnotesize{2: Universit\'e Paris Est, CERMICS, Ecole des Ponts ParisTech, 
6 \& 8 Av. Pascal, 77455 Marne-la-Vall\'ee Cedex 2, France}\\
\footnotesize{3: INRA Unité MIA, Domaine de Vilvert, 78352 Jouy-en-Josas Cedex, France}\\
\footnotesize{4: Project-team MICMAC, INRIA Rocquencourt, Domaine de Voluceau, B.P. 105, 78153 Le Chesnay Cedex, France}  
}

\maketitle

\begin{abstract}
We analyze the convergence properties of the Wang-Landau algorithm. This sampling
method belongs to the general class of adaptive importance sampling
strategies which use the free energy along a chosen reaction coordinate as a
bias.  Such algorithms are very helpful to enhance the sampling properties of Markov Chain
Monte Carlo algorithms, when the dynamics is metastable. We prove the convergence of the Wang-Landau algorithm and  an
associated central limit theorem.
\end{abstract}



\section{Introduction}

The Wang-Landau algorithm was originally proposed in the physics literature to
efficiently sample the density of states of Ising-type
systems~\cite{wang-landau-01,WL01PRL}. From a computational statistical
point of view, it can be seen as some adaptive importance sampling strategy
combined with a Metropolis algorithm: the instrumental distribution is updated
at each iteration of the algorithm in order to have a sampling of the
configuration space as uniform as possible along a given direction.  There are numerous physical and biochemical
works using this technique to overcome sampling problems such as the ones
encountered in the computation of macroscopic properties around critical points
and phase transitions. The original
paper~\cite{WL01PRL} is cited more than one thousand times, according
to Web of Knowledge. The success of the technique motivated its use and study in the statistics
literature, see~\cite{Liang05,LLC07,atchade-liu-10,JR11,BJDD11} for instance
for previous mathematical and numerical studies.

\subsection{Free energy biasing techniques}

\bigskip 

The Wang-Landau algorithm belongs to the class of {\em free energy biasing
techniques}~\cite{lelievre-rousset-stoltz-07-b} which have been introduced in
computational statistical physics to efficiently sample thermodynamic
ensembles and to compute free energy differences. These algorithms can
be seen as {\em adaptive importance sampling techniques}, the biasing factor
being adapted on-the-fly in order to flatten the target probability
measure along a given direction. Let us explain this with more details.

Let $\pi$ be a multimodal probability measure over a high-dimensional space
$\Xset \subseteq \Rset^D$. Classical algorithms to sample $\pi$ (such as a
Metropolis-Hastings procedure with local proposal moves) typically converge
very slowly to equilibrium since high probability regions are separated by low
probability regions. Averages have to be taken over very long trajectories in
order to visit all the modes of the target probability measure $\pi$. The idea
of free energy biasing techniques is to {\em flatten the target probability
  along a well-chosen direction} through an importance sampling procedure in
order to more easily sample $\pi$. More precisely, assume that we are given a
measurable function ${\mathcal O}$ defined on $\Xset$ and with values in a low
dimensional compact space, or in a discrete space. This function is sometimes
called a reaction coordinate or an order parameter in the physics literature.
Let us introduce ${\mathcal O} * \pi$ the image of the measure $\pi$ by
${\mathcal O}$: for any test function $\varphi$ on the image ${\mathcal O}(\Xset)$ of $\Xset$ by ${\mathcal O}$, 
$\int_{{\mathcal O}(\Xset)}\varphi(y){\mathcal O} * \pi(dy)=\int_\Xset\varphi({\mathcal O}(x))\pi(dx)$.
The free energy biased probability measure $\pi^\star$ is defined by the two
following properties: \textit{(i)} the image ${\mathcal O} * \pi^\star$ of
$\pi^\star$ by ${\mathcal O}$ is the uniform measure on ${\mathcal O}(\Xset)$ and \textit{(ii)} for each $y\in{\mathcal O}(\Xset)$, the conditional distributions of $x$ given ${\mathcal O}(x)=y$ under $\pi(dx)$ and $\pi^\star(dx)$ coincide i.e. there exists a measurable function $h:{\mathcal O}(\Xset)\to\Rset_+$ such that 
$\pi^\star(dx)=h({\mathcal O}(x))\pi(dx)$.

Let us give two prototypical examples. When ${\mathcal O}=\xi$ is a smooth function
with values in a continuous space, for example $\xi: \Xset \to {\mathbb T}$
(where ${\mathbb T}={\mathbb R} / {\mathbb Z}$ is the one-dimensional torus), we have
\begin{equation}\label{eq:Ocont}
\pi^\star (dx) = (1/ \rho) \circ \xi (x) \pi (dx) \eqsp,
\end{equation}
where the measure $\xi *
\pi$ is assumed to admit the density $\rho: \mathbb{T} \to \Rset_+$ with respect to the Lebesgue measure on ${\mathbb{T}}$. In this case,
$A(z)=- \ln \rho(z)$ can be interpreted as a free energy~\cite{LRS10}.
This explains the name ``free
energy biasing techniques''. When ${\mathcal O}=I$ is a function with values in a
discrete finite set (this will be the case considered in this paper), $I: \Xset \to \{1, \ldots, d\}$, we have
\begin{equation}\label{eq:Odisc}
\pi^\star (dx) = \frac{1}{d}\sum_{i=1}^d \frac{\un_{I(x)=i}}{\theta_{\star}(i)}
\pi(dx) \eqsp,
\end{equation}
where $\theta_{\star}(i)=\pi\left(\{x\in\Xset, \, I(x)=i\}\right)=I * \pi(i)$ for $i\in\{1, \ldots, d\}$.

The bottom line of free energy biasing techniques is that it should be easier
to sample $\pi^\star$ than to sample $\pi$ since, by construction, ${\mathcal
  O}*\pi^\star$ is the uniform probability measure. Then, sampling from $\pi$
could be obtained by importance sampling from $\pi^\star$. The fact that
$\pi^\star$ is indeed much easier to sample than $\pi$ actually depends on the
choice of ${\mathcal O}$. It is not an easy task to define and to design in
practice a good choice for ${\mathcal O}$ and we do not discuss further these
aspects here. This is related to the choice of a ``good'' reaction coordinate
in the physics literature, which is a very debatable subject. We refer for
example to~\cite{CLS12} for such an analysis in the context of free energy
biasing techniques used to sample posterior distributions in Bayesian
statistics.

Of course, the difficulty is that in general, ${\mathcal O}*\pi$ is unknown
(equivalently $\rho$ in~\eqref{eq:Ocont}, or $\theta_{\star}$
in~\eqref{eq:Odisc}, are unknown) so that it is not possible to sample from
$\pi^\star$. The idea is then {\em to approximate ${\mathcal O}*\pi$ on the
  fly} in order to, in the longtime limit, sample from $\pi^\star$. This is the
adaptive feature of these algorithms: the importance sampling factor is
computed as time goes, in order to penalize states (namely level sets of
${\mathcal O}$) which have already been visited. To approximate $\pi^\star$ at
a given time, one could either use the occupation measure of the Markov chain
up to the current time (this is typically what is done in practice in the
molecular dynamics community) or one could use an approximation over many
Markov chains running in
parallel~\cite{lelievre-rousset-stoltz-07-b,minoukadeh-chipot-lelievre-10}.
Moreover, one could either think of approximating ${\mathcal O}*\pi$ (these are
the so-called Adaptive Biasing Potential (ABP) techniques) or, in the case ${\mathcal
  O}$ is a continuous order parameter, approximating $A'(z)$ (these are the
so-called Adaptive Biasing Force (ABF) techniques~\cite{DP01,HC04}). This gives rise
to many algorithms in the literature (see for instance the classification and references in~\cite{lelievre-rousset-stoltz-07-b}), which
are more or less efficient and more or less difficult to analyze
mathematically. We refer for example to~\cite{DP01,HC04} for ABF techniques
using the occupation measure, to~\cite{minoukadeh-chipot-lelievre-10,JLR10} for
ABF techniques using many replicas in parallel, to~\cite{wang-landau-01} for
ABP approaches using the occupation measure and
to~\cite{babin-roland-sagui-08,BJDD11} for ABP approaches using many replicas
in parallel. Before discussing the efficiency and the mathematical analysis of
these algorithms, let us emphasize that in many applications, computing the
measure ${\mathcal O}*\pi$ (or equivalently the free energy) is actually the
main goal~\cite{LRS10}.

Roughly speaking, from a practical point of view, most ABP approaches
(like the Wang-Landau algorithm) are more
involved to use since they typically require to introduce a vanishing adaption
mechanism. Indeed, even if one starts with a very good approximation
of ${\mathcal O}*\pi$, and thus with a probability measure very close
to $\pi^\star$, the adaptive mechanism will introduce a non-zero biasing factor
to penalize visited level sets of ${\mathcal O}$, as time goes. One
crucial feature of ABP approaches is thus to penalize less and less (as time goes) the
visited states, so that in the longtime limit, no adaption is
performed anymore. The way this adaption mechanism is performed is made precise below in the Wang-Landau case. We would also like to mention that some ABP
techniques without externally imposed vanishing adaption have been
proposed, like the self-healing umbrella sampling~\cite{marsili-barducci-chelli-procacci-schettino-06,dickson-legoll-lelievre-stoltz-fleurat-lessard-10},
but we do not discuss them here.
ABF approaches do not require such a vanishing adaption mechanism
since the approximation of $A'(z)$ is based on conditional measures
given the value of ${\mathcal O}$, which are not affected by the biasing factor
(since it only depends on ${\mathcal O}$). However, ABF techniques
cannot be used for discrete order parameters.

In terms of mathematical analysis, approximations based on many
replicas in parallel are typically easier to analyze, since they can
be related (in the limit of infinitely many replicas) to mean field
models for which powerful longtime convergence analysis techniques can
be used. We refer for example to~\cite{LRS08,lelievre-minoukadeh-09} for such an
analysis for an ABF technique. In~\cite{LRS08} for example, it is shown that the method is
efficient if for each $y\in{\mathcal O}(\Xset)$, the conditional distribution of $x$ given ${\mathcal O}(x)=y$ under $\pi(dx)$ has good mixing properties (namely large Logarithmic
Sobolev Inequality constants). The convergence analysis and, more
importantly, the study of the efficiency of free energy biased
techniques for approximations based on the occupation measure are much
more involved since correlations in time of the Markov process play a
crucial role. The aim of this paper is to propose a convergence analysis for
the Wang-Landau algorithm, which is an ABP approach. 

We would like to stress that the convergence results are a necessary first
step in the study of the Wang-Landau algorithm, but are by no means the end of
the story. Indeed, the real practical interest of adaptive techniques are their
improved convergence properties.  Although this improvement is obvious to
practitioners, it is mathematically more difficult to formalize. We refer to the
companion paper~\cite{FJKLS2} for a study of the efficiency of the
algorithm on a very simple toy model. Further works to mathematically
formalize the efficiency of the Wang-Landau algorithm are required.

\subsection{Objectives and main results}

In this paper, we consider the Wang-Landau algorithm applied to the case ${\mathcal O}=I$ with $I:\Xset\to\{1,\ldots,d\}$. It both computes  a (penalty) sequence $\{\theta_n,
n\geq 0 \}$ approximating (in the longtime limit) the probability measure $I * \pi$ and samples draws $\{X_n,
n\geq 0 \}$ distributed (in the longtime limit) according to $\pi^\star$. The update of the penalty
sequence follows a Stochastic Approximation
algorithm~\cite{RM51,benveniste:metivier:priouret:1987} and  is of the form
\[
\theta_{n+1} = \theta_n + \gamma_{n+1} \mathcal{H}_n(X_{n+1}, \theta_n) \eqsp.
\]
Different strategies about the field $\mathcal{H}_n$ and the adaption schedule
$\{\gamma_n, n\geq 1\}$ have been proposed in the literature.  In the
original paper~\cite{wang-landau-01}, the authors came
up with a stochastic adaption schedule hereafter called {\em flat histogram
  Wang-Landau}. In this procedure, the updating parameter~$\gamma_n$ remains
 constant up to the (random) time when the sampling along the chosen order parameter~${\mathcal O}$ is
 approximately uniform, the ``amount of uniformity'' being measured
 according to the current value of $\gamma_n$. Then $\gamma_n$ is lowered 
 and a new updating procedure of the weights starts with a constant stepsize. Another strategy consists in a deterministic update of
the adaption sequence $\{\gamma_n ,n\geq 1 \}$. 

Despite the Wang-Landau algorithm has been successfully applied for
many problems of practical interest, there are many
open questions about its longtime behavior and its efficiency. Such a longtime
behavior study relies on the convergence of stochastic approximation algorithms
with Markovian
inputs~\cite{benveniste:metivier:priouret:1987,andrieu:moulines:priouret:2005}
combined with the convergence of adaptive Markov chain Monte Carlo
samplers~\cite{fort:moulines:priouret:2011}; for both parts, the stability of
the sequence $\{\theta_n, n\geq 0\}$ is a fundamental
property. Stability here means that the sequence $\{\theta_n, n\geq
0\}$ remains in a compact subset of the probability
measures on $\{1,\ldots,d\}$ with support equal to the support of $I * \pi$ (as
explained in Section~\ref{sec:cvg}, this is related to a recurrence property).

The asymptotic behavior of the flat histogram Wang-Landau algorithm, when
$\mathcal{H}_n$ is such that in some sense, $\theta_n$ counts the number of
visits to the level sets of $\mathcal{O}$, has been considered in~\cite{atchade-liu-10,JR11}. One crucial step is to show that the time $\tau$
to reach the flat histogram criterium is finite with probability one.
In~\cite{atchade-liu-10}, it is proved for a specific field $\mathcal{H}_n$,
that $\tau$ is finite almost-surely, the sequence $\{\theta_n, n\geq 0\}$ is
stable and converges almost-surely. A strong law of large numbers for the draws
$\{X_n,n \geq 0\}$ is also established for a wide family of unbounded
functions. In \cite{JR11}, the authors show that the precise form of $\mathcal{H}_n$
plays a role on the convergence of the flat histogram Wang-Landau
algorithm (see Section~\ref{sec:linearized_WL} for more details).


In this paper, we consider the Wang-Landau algorithm with a deterministic
adaption sequence $\{\gamma_n, n\geq 1\}$ (see again
Section~\ref{sec:linearized_WL} for a precise definition of the algorithm). The aim of this article is to address both the convergence of $\{\theta_n, n\geq 0\}$ to $I * \pi$ and the
convergence of $\{X_n, n\geq 0\}$ to $\pi^\star$.  More precisely, we
prove first that the sequence $\{\theta_n, n\geq 0\}$ is stable, which is a
crucial point for applications: no {\em ad hoc} stabilization
techniques (such as truncation at randomly varying bounds~\cite{chen:guo:gao:1988}) is required.  We also prove the
almost-sure convergence of $\{\theta_n, n\geq 0\}$ as  well as a Central Limit Theorem.  We then prove
the ergodicity and a strong law of large numbers for the draws $\{X_n, n\geq
0\}$.

Concerning the convergence result, we would like to mention the
previous work~\cite{LLC07} where some results about the longtime analysis for Wang-Landau with
deterministic adaption can be found. In this paper, the authors
combine the Wang-Landau algorithm with a reprojection technique on a fixed compact subset of probability
measures on $\{1,\ldots,d\}$ with support equal to the support of $I * \pi$ so that the
sequence $\{\theta_n, n\geq 0\}$ is stable by definition; then, they prove the
convergence of the sequence whenever the limiting point is in the interior of
the reprojection subset.  Therefore, our results extend the work in~\cite{LLC07}
by precisely analyzing the stability of the algorithm, by addressing the
convergence of $\{\theta_n, n\geq 0\}$ under weaker assumptions and by proving
additional asymptotic analysis.

\medskip

The paper is organized as follows. We describe in
Section~\ref{sec:description_algorithm} the algorithm we consider and compare
it to previously proposed Wang-Landau type algorithms. We then study
its asymptotic behavior in Section~\ref{sec:cvg}. We first prove in
Section~\ref{sec:stability} a fundamental stability result. Then we deduce
convergence properties relying on previous results on stochastic approximation with
Markovian inputs and on the theory of adaptive Markov chain Monte Carlo
samplers. The proofs of the results presented in Sections~\ref{sec:cvg}
are gathered in Section~\ref{sec:proof:cvg}.

\section{Description of the Wang-Landau algorithm}
\label{sec:description_algorithm}

\subsection{Notation and preliminaries}

The system that we consider is described by a normalized target probability density
$\pi$ defined on a Polish space $\Xset$, endowed with a reference
measure~$\lambda$ defined on the Borel $\sigma$-algebra $\Xsigma$. Notice that, as for classical Metropolis-Hastings procedure, the practical implementation of the
algorithm only requires to specify $\pi$ up to a multiplicative
constant.
 In statistical physics, $\Xset$ typically is the set of all
admissible configurations of the system while $\pi$ is a Gibbs measure with
density $\pi(x) = Z_\beta^{-1} \exp(-\beta U(x))$, $U$ being the potential
energy function and $\beta$ the inverse temperature.  In condensed matter
physics for instance, actual simulations are performed on systems composed of
$N$ particles in dimension 2 or 3, living in a cubic box with periodic
boundary conditions. In this case, $\Xset = (L\mathbb{T})^{2N}$ or
$\Xset = (L\mathbb{T})^{3N}$, where $L$ is the length of the sides of
the box and
$\mathbb{T} = \mathbb{R} / \mathbb{Z}$ is the one-dimensional torus.

Consider now a partition $\Xset_1, \ldots,
\Xset_d$ of $\Xset$ in $d \geq 2$ elements, and define, for any $i \in \{1, \ldots, d \}$,
\begin{equation}
  \label{eq:def:thetastar}
 \t_\star(i) \eqdef \int_{\Xset_i} \pi(x) \lambda(dx) \eqsp.
\end{equation}
In the following, $\Xset_i$ will be called the $i$-th {\em stratum}.  Each
weight $\t_\star(i)$, which is assumed to be positive, gives the relative
likelihood of the stratum $\Xset_i \subset \Xset$.  In practice, the
partitioning could be obtained by considering some smooth function $\xi \, : \,
\Xset \to [a,b]$ (called a reaction coordinate in the physics literature) and
defining, for $i=1,\dots,d-1$,
\begin{equation}
  \label{eq:def_Xi}
\Xset_i = \xi^{-1}\Big([\alpha_{i-1},\alpha_{i})\Big) \eqsp,
\end{equation}
and $\Xset_d =  \xi^{-1}\left([\alpha_{d-1},\alpha_{d}]\right)$, with 
$a = \alpha_0 < \alpha_1 < \dots \alpha_d = b$ (possibly, $a=-\infty$
and/or $b=+\infty$). In the notation of the introduction, the order
parameter is thus the discrete function $I: \Xset \to \{1, \ldots,
d\}$ defined by
\begin{equation}
  \label{eq:definition:fonctionI}
  \forall x \in \Xset, \, I(x)=i \text{ if and only if } x \in \Xset_i \eqsp.
\end{equation}
As mentioned above, the choice of an appropriate function~$I$ is a difficult issue,
and is mostly based on intuition at the time being: practitioners 
identify some slowly evolving degree of freedom
responsible for the metastable behavior of the system
(the fact that trajectories generated by the numerical 
method remain trapped for a long time 
in some region of the phase space, and only occasionally hop to
another region, where they also remain trapped). There are however ways to 
quantify the relevance of the choice of the reaction coordinate, see for instance
the discussion in~\cite{CLS12}.

The above discussion motivates the fact that the weights $\t_\star(i)$
typically span several orders of magnitude, some sets $\Xset_i$ having very
large weights, and other ones being very unlikely under~$\pi$. Besides,
trajectories bridging two very likely states may need to go through unlikely
regions.  To efficiently explore the configuration space, and sample numerous
configurations in all the strata $\Xset_i$, it is therefore a natural idea to resort
to importance sampling strategies and reweight appropriately
each subset~$\Xset_i$. A possible way to do so is the following.  Let $\Theta$
be the subset of (non-degenerate) probability measures on $\{1, \ldots, d \}$ given by
\[
\Theta = \left\{
\t = (\t(1), \ldots, \t(d)) \ \left| \ 0 < \t(i) <1 \ \text{for all $i \in
  \{1, \ldots, d \}$ and} \ \sum_{i=1}^d \t(i) =1 \right. \right\} \eqsp.
\]
For any $\t \in \Theta$, we define the probability density $\pi_\t$ on $(\Xset,
\Xsigma)$ (endowed with the reference measure $\lambda$) as
\begin{equation}
  \label{eq:def:pitheta}
 \pi_\t(x) = \left(\sum_{i=1}^d \frac{\t_\star(i)}{\t(i)}\right)^{-1} \ 
\sum_{i=1}^d \frac{\pi(x)}{\t(i)} \ \un_{\Xset_i}(x) \eqsp. 
\end{equation}
This measure is such that the weight of the set $\Xset_i$ under $\pi_\t$ is
proportional to $\t_\star(i)/\t(i)$. In particular, all the strata $\Xset_i$
have the same weight under~$\pi_{\t_\star}$. Unfortunately, $\t_\star$ is
unknown and sampling under $\pi_{\theta_\star}$ is typically
unfeasible. 

The Wang-Landau algorithm precisely is a
way to overcome these difficulties: at each iteration of the algorithm, a
weight vector $\t_n = (\t_n(1), \ldots, \t_n(d))$ is updated based on the past
behavior of the algorithm and a point is drawn from a Markov kernel $P_{\t_n}$
with invariant density $\pi_{\t_n}$. The intuition for the convergence of
this algorithm is that if $\{\t_n, n\geq 0 \}$ converges to $\t_\star$ then the
draws are asymptotically distributed according to the density $\pi_{\t_\star}$. Conversely, if the draws are
under $\pi_{\t_\star}$, then the update of $\{\t_n, n\geq 0 \}$ is chosen such that it
converges to $\t_\star$. We will derive below sufficient conditions on the sequence $\{\gamma_n,n\geq 1\}$ of step-sizes used to update $\{\t_n, n\geq 0 \}$ and on the Markov kernels $\{P_\t, \t \in
\Theta\}$ in order to prove the convergence of a version of the Wang-Landau
algorithm, namely a linearized Wang-Landau algorithm with a
deterministic adaption where the step-size $\gamma_n$ is used at the $n$-th
iteration of the Markov chain.

\subsection{The linearized Wang-Landau algorithm with  deterministic
  adaption}
\label{sec:linearized_WL}

 We now
describe the algorithm we study in this article.  Let $\{\gamma_n, n \geq 1\}$
be a $[0,1)$-valued deterministic sequence. For any $\t \in \Theta$, denote by
$P_\t$ a Markov transition kernel onto $(\Xset, \Xsigma)$ with unique
stationary distribution $\pi_\t(x)\lambda(dx)$; for example, $P_\t$ is
one step of a Metropolis-Hastings
algorithm~\cite{MRRTT53,Hastings70} with target probability measure $\pi_\t(x)\lambda(dx)$.

Consider an initial value $X_0 \in \Xset$ and an initial set of weights $\t_0
\in \Theta$ (typically, in absence of any prior information, $\t_0(i) = 1/d$).
Define the process $\{(X_n, \t_n), n\geq 0 \}$ as follows: given the current
value $(X_n, \t_n)$,
\begin{enumerate}[\quad (1)]
\item Draw $X_{n+1}$ under the conditional
  distribution $P_{\t_n}(X_n, \cdot)$;
\item Set $i = I(X_{n+1})$ where $I$ is given by
  (\ref{eq:definition:fonctionI}). The weights are then updated as
\begin{equation}
\label{eq:update_weights_linearized}
\left\{ \begin{array}{ll}
    \t_{n+1}(i) =   \t_n(i) + \gamma_{n+1} \ \t_n(i)  \left(1 -  \t_n(i)\right),  & \\
    \t_{n+1}(k) = \t_n(k) - \gamma_{n+1} \ \t_n(k) \ \t_n(i) & \text{for} \ k \neq i.
\end{array} \right.
\end{equation}
\end{enumerate}
Note that since $\gamma_n \in [0,1)$, $\t_{n} \in \Theta$ for any $n
\geq 0$. As explained in the introduction, the idea of the updating
strategy~\eqref{eq:update_weights_linearized} is that the weights of the
visited stratas are increased, in order to penalize already visited states. 
The update of the probability vector $\t_n$ can be recast equivalently
into the stochastic
approximation framework upon writing
\begin{equation}
  \label{eq:SSA_formulation}
  \t_{n+1} = \t_n + \gamma_{n+1} \, H(X_{n+1},\t_n) \eqsp,
\end{equation}
where $H: \Xset \times \Theta \to [-1,1]^d$ is defined componentwise by
\begin{equation}
  \label{eq:def:ChampsH}
 H_i(x,\t) = \t(i) \left( \un_{\Xset_i}(x) - \t(I(x)) \right)\eqsp, 
\end{equation}
with the function $I$ given by (\ref{eq:definition:fonctionI}).

The updating strategy~(\ref{eq:update_weights_linearized}) (or
equivalently~\eqref{eq:SSA_formulation}) is a modification of the original
Wang-Landau algorithm obtained by (i) using a deterministic schedule for the evolution of the step-sizes used to modify
the values of the weights (instead of reducing the value of these step-sizes at
random times when the empirical frequencies of the strata are sufficiently uniform: this is the flat histogram version of the
Wang-Landau algorithm mentioned in the introduction) and (ii) linearizing at first order in
$\gamma_n$ the update of the weight $\t_n$.

Concerning this second
point,  the standard Wang-Landau update is 
\begin{equation}
\label{eq:NL_update}
\t_{n+1}(i) = \t_n(i) \frac{1 + \gamma_{n+1} \un_{\Xset_i}(X_{n+1})}{\dps 1 + \gamma_{n+1}\theta_n(I(X_{n+1}))}\eqsp.
\end{equation}
The update~\eqref{eq:update_weights_linearized} is obtained
from~\eqref{eq:NL_update} in the limit of small $\gamma_n$. For the stability
and the convergence analysis in Section~\ref{sec:cvg}, we adopt this linear
update. The main advantage is that it makes the proof of convergence
simpler: with the  standard Wang-Landau update, an additional remainder term would
have to be considered in Proposition~\ref{prop:remainder:cvg}. Nevertheless, since $\gamma_n$ converges to zero, the stability and convergence results
stated in Section~\ref{sec:cvg} could be proved using similar arguments for the standard Wang-Landau update (see Section~\ref{sec:stabnl} below concerning the stability).

By contrast, we would like to emphasize here that this distinction between the two
updating strategies~\eqref{eq:update_weights_linearized}
and~\eqref{eq:NL_update} {\em does matter} when
considering the flat histogram criterium for the vanishing adaption
procedure, as proved in~\cite{JR11}. 
Indeed it is shown in~\cite{JR11} that the linearized version of the update~\eqref{eq:update_weights_linearized} allows to
satisfy in finite time the uniformity criterion required in the original
Wang-Landau algorithm, whereas this is not guaranteed for the nonlinear
update~\eqref{eq:NL_update}.

\section{Convergence of the Wang-Landau algorithm}
\label{sec:cvg}
The proof of the convergence of the Wang-Landau algorithm described in
Section~\ref{sec:linearized_WL} relies on its
reformulation~\eqref{eq:SSA_formulation} as a stochastic approximation
procedure. Since the draws $\{X_n, n\geq 1\}$ satisfy for any measurable
non-negative function $f$:
\begin{equation}
  \label{eq:MarkovDynamic}
  \PE\left[f(X_{n+1}) \vert \F_n \right] = P_{\t_n}f(X_n) \eqsp,
\end{equation}
where $\F_n$ denotes the $\sigma$-field $\sigma(\t_0, X_0, X_1, \ldots, X_n )$,
it is a so-called "stochastic approximation procedure with Markovian dynamics" (see
\textit{e.g.}~\cite{benveniste:metivier:priouret:1987}).

The main difficulty, when proving the
almost-sure convergence of such algorithms, is the stability, namely how to
ensure that the sequence $\{\t_n, n\geq 0\}$ remains in a compact subset of~$\Theta$.  We use a traditional approach to answer this question: we first
prove that our algorithm satisfies a recurrence property {\em i.e.}  the sequence $\{\t_n, n\geq 0\}$
visits infinitely often a compact subset of~$\Theta$; we then show that there
exists a Lyapunov function with respect to the {\em mean-field} function $h:
\Theta \to [-1,1]^d$
\begin{equation} \label{eq:mean-field}
  h(\t) =\int_\Xset H(x,\t) \, \pi_\t(x) \, \lambda(dx) = \left( \sum_{j=1}^d
    \frac{\theta_\star(j)}{\theta(j)}\right)^{-1} ( \theta_\star -
  \theta) \eqsp, 
\end{equation}
with strong enough properties so that the recurrence property implies
stability.  Different strategies based on truncations are proposed in the
literature to circumvent the stability problem (see
\textit{e.g.}~\cite{kushner:yin:1997}). The most popular technique is the
truncation to a fixed compact set but this is not a satisfactory solution since
the choice of this compact is delicate: a necessary condition for convergence
is that the compact contains the unknown desired limit. An adaptive truncation
has been proposed by~\cite{chen:guo:gao:1988} (and analyzed for
example in~\cite{andrieu:moulines:priouret:2005,lelong}) which avoids the main drawbacks
of the deterministic truncation approach.  We prove in
Section~\ref{sec:stability} that, under conditions on the target density $\pi$
and the step-size sequence $\{\gamma_n, n\geq 1\}$, the algorithm
(\ref{eq:SSA_formulation}) is recurrent, so that such truncation techniques are
not required.

In Section~\ref{sec:convergence_weight}, we address the almost-sure
convergence of the weight sequence $\{\t_n, n\geq 0\}$.  We then obtain in
Section~\ref{sec:convergence_averages} the convergence in distribution and a
strong Law of large numbers for the samples $\{X_n, n \geq 0 \}$. Finally, we
obtain a central limit theorem in Section~\ref{sec:asymptotic_variance_weight}
for the weight sequence $\{\t_n, n\geq 0 \}$.

\subsection{Assumptions on the Metropolis dynamics and on the  adaption rate}
\label{sec:assumptions}
Our conditions fall into three categories: conditions on the equilibrium
measure (see~A\ref{hyp:targetpi}), on the transition kernels $\{P_\t, \t
\in\Theta \}$ (see~A\ref{hyp:kernel}) and conditions on the step-size sequence
$\{\gamma_n, n \geq 1\}$ (see~A\ref{hyp:stepsize}). It is assumed that

\debutA
\item \label{hyp:targetpi} The probability density $\pi$ with respect to the
  measure $\lambda$ is such that $0 < \inf_{\Xset}\pi \leq \sup_\Xset \pi <
  \infty$.  In addition, $\inf_{1 \leq i \leq d } \t_\star(i)> 0$ where
  $\t_\star$ is given by~(\ref{eq:def:thetastar}).  \finA

  The first part of Assumption A\ref{hyp:targetpi} is satisfied, for example, for smooth
  positive densities on a compact state space $\Xset \subset \Rset^D$ with the
  Lebesgue measure as the reference measure $\lambda$, or for a positive
  probability measure on a discrete finite state space $\Xset=\{1,\ldots,K\}$
  with the uniform measure as the reference measure. Since $\inf_\Xset \pi$ is
  assumed to be positive, the second part of the assumption is satisfied as
  soon as $\inf_{1\leq i\leq d}\lambda(\Xset_i) > 0$. The minorization
  condition on $\pi$ certainly is the most restrictive assumption: it is
  introduced in order to prove the recurrence of the
  algorithm~(\ref{eq:SSA_formulation}). This condition can be removed by adding
  a stabilization step to (\ref{eq:SSA_formulation}) (such as a truncation
  technique at random varying bounds~\cite{chen:guo:gao:1988,kushner:yin:1997})
  in order to ensure the recurrence.
  
The second assumption is:
\debutA
  \item \label{hyp:kernel} For any $\t \in \Theta$, $P_\t$ is a
    Metropolis-Hastings transition kernel with invariant distribution $\pi_\t \ 
    d\lambda$, where $\pi_\t$ is given by (\ref{eq:def:pitheta}), and with symmetric
    proposal kernel $q(x,y) \lambda(dy)$ satisfying $\inf_{\Xset^2} q >
    0$.
\finA 
The transition probability for a symmetric Metropolis-Hastings dynamics reads
\[
P_\t(x,dy) = q(x,y) \alpha_\t(x,y) \, \lambda(dy) + \delta_x(dy) \ \left(1-\int_\Xset q(x,z) \alpha_\t(x,z) \, \lambda(dz) \right) \eqsp,
\]
with 
\[
\alpha_\t(x,y) = 1 \wedge \frac{\pi_\t(y)q(y,x)}{\pi_\t(x)q(x,y)}=1 \wedge \frac{\pi_\t(y)}{\pi_\t(x)} \eqsp,
\]
the last equality being a consequence of the symmetry of $q$.  Assumption
A\ref{hyp:kernel} is satisfied for instance when $\Xset = \mathbb{T}^n$ (a
cubic simulation cell endowed with periodic boundary conditions), and $q(x,y) =
\widetilde{q}(y-x)$ for a positive and even density $\widetilde{q}$ such that
$\inf_\Xset \widetilde{q} >0$.

The minorization condition on $q$ implies
that the transition kernels $\{ P_\t, \t \in \Theta \}$ are uniformly
(geometrically) ergodic, as stated in Proposition~\ref{prop:unifergo} below.  This property allows a simple presentation of the
main ingredients for the limiting behavior analysis of the algorithm.
Extensions to a more general case could be done by using the same tools as in
\cite{fort:moulines:priouret:2011} (see also~\cite[Section 3]{andrieu:moulines:priouret:2005}) and controlling the dependence upon $\t$ of
the ergodic behavior. These technical steps are out of the scope of
this paper.

We prove in Section~\ref{proof:prop:unifergo} the following result:
\begin{prop}
  \label{prop:unifergo}
  Under A\ref{hyp:targetpi} and A\ref{hyp:kernel}, there exists $\rho
  \in (0,1)$ such that for all $\t \in \Theta$, for all $x \in \Xset$
  and for all $A \in \Xsigma$, it holds:
  \begin{align} 
    & \qquad P_\t(x,A) \geq \rho \, \int_A
    \pi_\t(x) \, \lambda(dx) \eqsp, \label{eq:minorization_bound} \\
   & \sup_{\t \in \Theta} \sup_{x \in \Xset} \left\| P_\t^n(x, \cdot) - \pi_\t \, d\lambda
    \right\|_\tv \leq 2  (1-\rho)^n, \label{eq:uniformeergodicite}
  \end{align}
where for a signed measure $\mu$, the total variation norm
is defined as
\[
\|\mu\|_\tv = \sup_{\{f \, : \, \sup_\Xset |f| \leq 1 \}} |\mu(f)| \eqsp.
\]
\end{prop}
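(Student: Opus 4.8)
The plan is to derive the uniform minorization condition \eqref{eq:minorization_bound} first and then deduce the uniform geometric ergodicity \eqref{eq:uniformeergodicite} as a standard consequence. The starting observation is that under A\ref{hyp:targetpi} and A\ref{hyp:kernel}, the biased density $\pi_\t$ defined in \eqref{eq:def:pitheta} is itself uniformly bounded above and below: since $\inf_\Xset \pi \leq \pi(x) \leq \sup_\Xset \pi$, since $\theta(i) \in (0,1)$, and since the normalizing factor $\left(\sum_{j=1}^d \theta_\star(j)/\theta(j)\right)^{-1}$ lies in a fixed compact subinterval of $(0,\infty)$ uniformly in $\t \in \Theta$ (using $\inf_i \theta_\star(i) > 0$ and $\sum_i \theta_\star(i) = 1$), one obtains constants $0 < m \leq M < \infty$, independent of $\t$, with $m \leq \pi_\t(x) \leq M$ for all $x$ and all $\t$. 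The key point is that the lower bound $m$ involves $\inf_\Xset \pi$ and $\sup_i (1/\theta(i)) \geq 1$, so it is genuinely uniform; here the minorization hypothesis on $\pi$ in A\ref{hyp:targetpi} is exactly what is needed.

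Next I would write out the Metropolis--Hastings kernel explicitly as given in the excerpt, $P_\t(x,dy) = q(x,y)\alpha_\t(x,y)\,\lambda(dy) + \delta_x(dy)\,r_\t(x)$ with $r_\t(x) \geq 0$, and simply drop the rejection part to get the lower bound $P_\t(x,A) \geq \int_A q(x,y)\alpha_\t(x,y)\,\lambda(dy)$. On the acceptance probability I would bound $\alpha_\t(x,y) = 1 \wedge (\pi_\t(y)/\pi_\t(x)) \geq m/M$ using the two-sided bounds on $\pi_\t$, and on the proposal I would use $q(x,y) \geq \inf_{\Xset^2} q =: q_- > 0$ from A\ref{hyp:kernel}. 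Combining, $P_\t(x,A) \geq q_-\,(m/M)\,\lambda(A)$. To turn this into the claimed form with $\pi_\t\,d\lambda$ on the right, I would use the upper bound once more: $\lambda(A) \geq (1/M)\int_A \pi_\t(x)\,\lambda(dx)$, so that \eqref{eq:minorization_bound} holds with $\rho = q_- m / M^2$, which is in $(0,1)$ after possibly shrinking it (it is automatically at most $1$ since integrating over $A = \Xset$ gives $1 = P_\t(x,\Xset) \geq \rho$).

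The passage from the one-step Doeblin minorization \eqref{eq:minorization_bound} to \eqref{eq:uniformeergodicite} is classical: a minorization $P_\t(x,\cdot) \geq \rho\,\nu_\t(\cdot)$ with $\nu_\t = \pi_\t\,d\lambda$ the invariant measure yields, by the standard coupling argument, $\|P_\t^n(x,\cdot) - \nu_\t\|_\tv \leq 2(1-\rho)^n$ uniformly in $x$; since $\rho$ does not depend on $\t$, taking the supremum over $\t \in \Theta$ gives the stated bound. I would cite the usual reference for this coupling inequality rather than reproving it. I do not anticipate a serious obstacle here; the only point requiring genuine care is the uniformity in $\t$ of the lower bound on $\pi_\t$, i.e.\ checking that the normalization constant in \eqref{eq:def:pitheta} stays bounded away from $0$ and $\infty$ as $\t$ ranges over all of $\Theta$ (including near the boundary where some $\theta(i) \to 0$), which is where $\inf_i \theta_\star(i) > 0$ is used in an essential way.
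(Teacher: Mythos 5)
There is a genuine gap in the first step. You claim that $\pi_\t$ admits a lower bound $m>0$ uniform in $\t\in\Theta$, on the grounds that the normalizing factor $\bigl(\sum_{j=1}^d \t_\star(j)/\t(j)\bigr)^{-1}$ stays in a fixed compact subinterval of $(0,\infty)$. This is false: $\Theta$ is the \emph{open} simplex, and as some coordinate $\t(j)\to 0$ the sum $\sum_j \t_\star(j)/\t(j)$ tends to $+\infty$ (since $\t_\star(j)>0$ by A1), so the normalizing factor tends to $0$. Concretely, for $d=2$, $x\in\Xset_1$, $\t=(1-\eps,\eps)$, one has $\pi_\t(x)=\frac{\pi(x)/(1-\eps)}{\t_\star(1)/(1-\eps)+\t_\star(2)/\eps}\to 0$ as $\eps\to 0$. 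Consequently your bound $\alpha_\t(x,y)\geq m/M$ has no uniform constant: writing $x\in\Xset_j$, $y\in\Xset_k$, the acceptance ratio is $1\wedge\frac{\pi(y)\,\t(j)}{\pi(x)\,\t(k)}$, which can be arbitrarily small when $\t(j)/\t(k)\to 0$. This is not a removable technicality — a minorization of the form $P_\t(x,A)\geq c\,\lambda(A)$ with $c>0$ uniform in $\t$ is actually \emph{false} (take $A=\Xset_k$ with $\t(k)$ of order one and $x$ in a stratum of vanishing weight), which is precisely why the statement is a minorization by $\pi_\t\,d\lambda$ and not by $\lambda$.

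The repair is the paper's (one-line) trick: instead of bounding the acceptance probability below by a constant, use
\[
1\wedge\frac{\pi_\t(y)}{\pi_\t(x)}\;=\;\frac{\pi_\t(y)}{\pi_\t(x)\vee\pi_\t(y)}\;\geq\;\frac{\pi_\t(y)}{\sup_\Xset\pi_\t}\eqsp,
\]
so that the factor $\pi_\t(y)$ is retained inside the integral and only the \emph{upper} bound $\sup_\Xset\pi_\t\leq \sup_\Xset\pi/\min_i\t_\star(i)$ (which is genuinely uniform in $\t$) is needed; this yields $\rho=(\inf_{\Xset^2}q)(\sup_\Xset\pi)^{-1}\min_i\t_\star(i)$ directly. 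Your remaining steps — dropping the rejection mass, using $\inf q>0$, and passing from the one-step Doeblin condition to \eqref{eq:uniformeergodicite} by the standard coupling/Meyn--Tweedie argument — are correct and coincide with the paper's.
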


We finally introduce conditions on the magnitude of the step-size sequence.
\debutA
\item \label{hyp:stepsize} The sequence $\{\gamma_n, n \geq 1\}$ is a
  $[0,1)$-valued deterministic sequence such that 
  \begin{enumerate}[a)]
  \item \label{hyp:pasdecroiss} $\{\gamma_n, n \geq 1 \}$ is a non-increasing
    sequence and $\lim_n \gamma_n = 0$;
  \item \label{hyp:stepsize:item2} $\sum_n \gamma_n  =\infty$; 
  \item \label{hyp:stepsize:item3} $\sum_n \gamma_n^2 < \infty$.
   \end{enumerate}
   \finA For ease of exposition, it is assumed in
   A\ref{hyp:stepsize} that the sequence is
   non-increasing and with values strictly smaller than $1$. These
   hypotheses can be weakened by assuming that  they are only
   satisfied ultimately: for
   some constant $n_0$, the sequence $\{\gamma_n, \, n \ge n_0\}$ is
   non-increasing  and with values strictly smaller than $1$, and 
   $\gamma_n \le 1$ for $n < n_0$. Examples of step-size sequence satisfying
   assumption A\ref{hyp:stepsize} are the polynomial schedules $\gamma_n =
   \gamma_\star/n^\alpha$ with $1/2 < \alpha \leq 1$. As already observed in
   Section~\ref{sec:linearized_WL}, the condition $\gamma_n \in [0,1)$ implies
   that if $\t_0 \in \Theta$, then for any $n \geq 1$, $\theta_n \in \Theta$.
   Assumption A\ref{hyp:stepsize}\ref{hyp:pasdecroiss} is introduced for the
   proof of the recurrence property. Assumptions
   A\ref{hyp:stepsize}\ref{hyp:stepsize:item2}-\ref{hyp:stepsize:item3} are
   standard conditions for the stability and the convergence of a stochastic
   approximation scheme since the pioneering work~\cite{RM51}.

\subsection{Recurrence property of the weight sequence  $\{\t_n, n \geq 0\}$}
\label{sec:stability}
We state in this section that, almost surely, there
exists a compact subset of $\Theta$ such that $\theta_n$ belongs to this
compact subset for infinitely many $n$. For any $n \ge 0$, set
\begin{equation}
  \label{eq:def:underlinetheta}
  {\underline{\theta}_n} = \min_{1\leq j\leq d}\theta_n(j) \eqsp.
\end{equation}
We prove in Section~\ref{sec:proof_recomp} the following theorem:
\begin{theo}
  \label{recomp}
  Assume A\ref{hyp:targetpi}, A\ref{hyp:kernel} and
  A\ref{hyp:stepsize}\ref{hyp:pasdecroiss}. Then, $\dps \PP\left( \limsup_{n\to\infty}\underline{\theta}_n>0 \right) =1$.
\end{theo}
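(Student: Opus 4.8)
The plan is as follows. The statement is immediate when $\prod_{n\ge1}(1-\gamma_n)>0$: the \emph{multiplicative} form of the update~\eqref{eq:update_weights_linearized} gives $\t_{n+1}(i)\ge\t_n(i)(1-\gamma_{n+1})$ for every $i$, hence $\t_n(i)\ge\t_0(i)\prod_{k=1}^n(1-\gamma_k)$ and $\underline{\t}_n\ge(\min_i\t_0(i))\prod_{k\ge1}(1-\gamma_k)>0$ for all $n$. So I may assume $\sum_n\gamma_n=\infty$ and proceed by contradiction, assuming $\PP(\underline{\t}_n\to0)>0$. The Lyapunov function I would use is $V(\t)\eqdef\sum_{i=1}^d\t_\star(i)\log(\t_\star(i)/\t(i))$, the Kullback--Leibler divergence of $\t$ from $\t_\star$: it is nonnegative on $\Theta$ and $V(\t)\to+\infty$ exactly as $\underline{\t}\to0$. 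Two elementary facts are the engine. First, the product structure of $H$ makes the factor $\t(i)$ cancel in $\pscal{\nabla V(\t)}{H(x,\t)}=-\sum_i\t_\star(i)(\un_{\Xset_i}(x)-\t(I(x)))=\t(I(x))-\t_\star(I(x))$, which is bounded by $1$ uniformly in $(x,\t)$ --- this is crucial since it removes any blow-up near the boundary of $\Theta$. Second, averaging this identity under $\pi_\t$ gives the mean-field drift $\pscal{\nabla V(\t)}{h(\t)}=-\sum_i\pi_\t(\Xset_i)\t_\star(i)+c(\t)$ with $c(\t)=(\sum_j\t_\star(j)/\t(j))^{-1}$; since $\sum_i\pi_\t(\Xset_i)=1$, $\t_\star(i)\ge\t_{\star,\min}\eqdef\min_j\t_\star(j)>0$ and $c(\t)\le\underline{\t}/\t_{\star,\min}$, one gets $\pscal{\nabla V(\t)}{h(\t)}\le-\t_{\star,\min}+\underline{\t}/\t_{\star,\min}\le-\t_{\star,\min}/2$ as soon as $\underline{\t}\le\t_{\star,\min}^2/2=:\varepsilon_\star$.

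Next, from $\t_{n+1}(i)/\t_n(i)=1+\gamma_{n+1}(\un_{\Xset_i}(X_{n+1})-\t_n(I(X_{n+1})))$ and $\log(1+u)\ge u-u^2$ for $u\ge-1/2$, I get for all $n$ large enough that $\gamma_{n+1}\le1/2$
\[
V(\t_{n+1})-V(\t_n)\le\gamma_{n+1}\pscal{\nabla V(\t_n)}{H(X_{n+1},\t_n)}+\gamma_{n+1}^2 .
\]
Splitting $\pscal{\nabla V(\t_n)}{H(X_{n+1},\t_n)}=\pscal{\nabla V(\t_n)}{h(\t_n)}+\xi_{n+1}$ with $\xi_{n+1}\eqdef\pscal{\nabla V(\t_n)}{H(X_{n+1},\t_n)-h(\t_n)}$, summing from a (random) index $N$ with $\underline{\t}_n\le\varepsilon_\star$ for all $n\ge N$ (such $N$ exists on $\{\underline{\t}_n\to0\}$), and using $V\ge0$:
\[
-V(\t_N)\le V(\t_{m+1})-V(\t_N)\le-\frac{\t_{\star,\min}}{2}\sum_{k=N}^m\gamma_{k+1}+\sum_{k=N}^m\gamma_{k+1}\xi_{k+1}+\sum_{k=N}^m\gamma_{k+1}^2 .
\]
Since $\gamma_n\downarrow0$ one has $\sum_{k\le m}\gamma_{k+1}^2=o(\sum_{k\le m}\gamma_{k+1})$ while $\sum_{k\le m}\gamma_{k+1}\to\infty$; thus a contradiction (left side bounded below, right side tending to $-\infty$) --- hence the theorem --- follows as soon as I establish $\sum_{k=1}^m\gamma_{k+1}\xi_{k+1}=o(\sum_{k=1}^m\gamma_{k+1})$ a.s. (a finite initial segment being absorbed).

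To control the noise I would use the Poisson equation: let $g_\t$ solve $g_\t-P_\t g_\t=\phi(\cdot,\t)-\pi_\t(\phi(\cdot,\t))$ with $\phi(x,\t)\eqdef\t(I(x))-\t_\star(I(x))$, i.e.\ $g_\t=\sum_{j\ge0}(P_\t^j-\pi_\t)(\phi(\cdot,\t)-\pi_\t(\phi(\cdot,\t)))$; by Proposition~\ref{prop:unifergo} this series converges with $\sup_{\t\in\Theta}\sup_\Xset|g_\t|\le4/\rho$, uniformly up to the boundary of $\Theta$ --- here the bound $|\phi|\le1$ coming from the cancellation above is what matters. Since $\xi_{k+1}=(g_{\t_k}-P_{\t_k}g_{\t_k})(X_{k+1})=e_{k+1}+\big(P_{\t_k}g_{\t_k}(X_k)-P_{\t_k}g_{\t_k}(X_{k+1})\big)$ with $e_{k+1}\eqdef g_{\t_k}(X_{k+1})-P_{\t_k}g_{\t_k}(X_k)$ a martingale increment, summing against $\gamma_{k+1}$ gives a martingale part which is $o(\sum_{k\le m}\gamma_{k+1})$ a.s.\ by the strong law for martingales (using $\sum_n\gamma_n=\infty$ and $\|g\|_\infty\le4/\rho$), while an Abel summation turns the remaining part into a bounded telescoping term, a term $\sum_k|\gamma_{k+1}-\gamma_k|\,\|g\|_\infty$ --- finite by the monotonicity assumption A\ref{hyp:stepsize}\ref{hyp:pasdecroiss} --- and a term $\sum_k\gamma_{k+1}\|P_{\t_k}g_{\t_k}-P_{\t_{k-1}}g_{\t_{k-1}}\|_\infty$, which I need to bound by $C\sum_k\gamma_k^2=o(\sum_k\gamma_k)$ through $\|P_{\t_k}g_{\t_k}-P_{\t_{k-1}}g_{\t_{k-1}}\|_\infty\le C\gamma_k$ with $C$ depending only on $\rho$ and $\t_\star$. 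Altogether $\sum_{k\le m}\gamma_{k+1}\xi_{k+1}=o(\sum_{k\le m}\gamma_{k+1})$ a.s., closing the argument.

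The main obstacle is exactly this last uniform-in-$\t$ estimate, because the trajectory we are ruling out approaches the boundary of $\Theta$, where naive Lipschitz bounds on $\t\mapsto P_\t$ and $\t\mapsto g_\t$ blow up. What rescues it is again the multiplicative nature of the update: $\t_k(i)/\t_{k-1}(i)=1+\gamma_k(\un_{\Xset_i}(X_k)-\t_{k-1}(I(X_k)))=1+O(\gamma_k)$, so the Metropolis acceptance ratios $\pi_\t(y)/\pi_\t(x)=(\pi(y)/\pi(x))(\t(I(x))/\t(I(y)))$ change only by a factor $1+O(\gamma_k)$ from step $k-1$ to step $k$, and since they are clipped by $1\wedge(\cdot)$ the acceptance probabilities --- hence $\sup_x\|P_{\t_k}(x,\cdot)-P_{\t_{k-1}}(x,\cdot)\|_\tv$ and $\|\pi_{\t_k}-\pi_{\t_{k-1}}\|_\tv$ --- are $O(\gamma_k)$ \emph{uniformly in the position of $\t_{k-1}$ in $\Theta$}. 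Combining this with the uniform ergodicity of Proposition~\ref{prop:unifergo}, the Lipschitz continuity of $\t\mapsto\phi(\cdot,\t)$ (which is linear in $\t$), and a resolvent identity for $g_{\t_k}-g_{\t_{k-1}}$ together with $\|g_\t\|_\infty\le4/\rho$, one obtains $\|P_{\t_k}g_{\t_k}-P_{\t_{k-1}}g_{\t_{k-1}}\|_\infty\le C\gamma_k$ as needed.
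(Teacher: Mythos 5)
Your proposal is correct in outline, and it takes a genuinely different route from the paper's. The paper never touches the mean field at this stage: it introduces the return times $T_k$ of the chain to the stratum of currently smallest weight, shows that $T_{k+1}-T_k$ has geometric tails uniformly in $k$ by exhibiting an explicit sequence of stratum visits whose conditional probability is bounded below (Lemma~\ref{lem:minopam}), and then runs a supermartingale argument for $-\ln \underline{\theta}_{T_k-1}$, balancing the guaranteed multiplicative gain at each return against the losses in between (Lemma~\ref{lem:surm}). Your argument is instead a global ODE-method proof with the Kullback--Leibler divergence $V$ as Lyapunov function, and its viability rests on one observation the paper does not exploit: the cancellation $\pscal{\nabla V(\t)}{H(x,\t)}=\t(I(x))-\t_\star(I(x))$, which is bounded by $1$ on $\Theta\times\Xset$ and $1$-Lipschitz in $\t$ uniformly in $x$. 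This is what lets you run the Poisson-equation noise decomposition \emph{up to the boundary of} $\Theta$, where the paper's own perturbation estimate (Lemma~\ref{lem:RegularitePoisson}, with its factor $1/\inf_i\t(i)$) degenerates --- which is exactly why the paper must establish recurrence before invoking the stochastic-approximation machinery, whereas you use a scalar version of that machinery to prove recurrence itself. The remaining inputs are all uniform and available without any stability: the minorization of Proposition~\ref{prop:unifergo}, and the $\mathrm{O}(\gamma_{k+1})$ control of $\|\pi_{\t_{k+1}}d\lambda-\pi_{\t_k}d\lambda\|_\tv$ and of $\sup_x\|P_{\t_{k+1}}(x,\cdot)-P_{\t_k}(x,\cdot)\|_\tv$ along the trajectory (Corollary~\ref{coro:RegulariteKernel}, which uses only the multiplicative form of the update). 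Two details deserve care in a full write-up. First, for $\sup_\Xset|g_{\t_{k+1}}-g_{\t_k}|\leq C\gamma_{k+1}$ you should indeed argue via the resolvent identity $(I-P_\t)(g_\t-g_{\t'})=(\phi(\cdot,\t)-\pi_\t\phi(\cdot,\t))-(\phi(\cdot,\t')-\pi_{\t'}\phi(\cdot,\t'))+(P_\t-P_{\t'})g_{\t'}$ together with the control of the centering constant $\pi_\t(g_\t-g_{\t'})=-(\pi_\t-\pi_{\t'})g_{\t'}$, as you indicate; a naive term-by-term comparison of the defining series only yields $\mathrm{O}(\gamma_{k+1}\log(1/\gamma_{k+1}))$ --- which would incidentally still suffice, since $\sum_{k\leq m}\gamma_k^2\log(1/\gamma_k)=\mathrm{o}\big(\sum_{k\leq m}\gamma_k\big)$ under A\ref{hyp:stepsize}\ref{hyp:pasdecroiss}. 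Second, since Theorem~\ref{recomp} does not assume A\ref{hyp:stepsize}\ref{hyp:stepsize:item3}, the martingale step must cover both $\sum_n\gamma_n^2<\infty$ (a.s.\ convergence of $\sum_k\gamma_{k+1}e_{k+1}$) and $\sum_n\gamma_n^2=\infty$ (Chow/Kronecker, giving $\sum_{k\leq m}\gamma_{k+1}e_{k+1}=\mathrm{o}\big(\sum_{k\leq m}\gamma_{k+1}^2\big)=\mathrm{o}\big(\sum_{k\leq m}\gamma_{k+1}\big)$). As a bonus, in the case $\sum_n\gamma_n=\infty$ your argument yields the stronger conclusion $\limsup_n\underline{\theta}_n\geq\varepsilon_\star$ almost surely, with the deterministic constant $\varepsilon_\star=\min_i\t_\star(i)^2/2$.
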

The proof is based on the following consideration. The
value of the smallest weight increases when the chain goes into the
corresponding stratum (see the updating
formula~(\ref{eq:update_weights_linearized})). Under the stated assumptions, we prove that the chain $\{X_n, n\geq 0\}$
returns in the strata of smallest weights often enough for the smallest
weight to remain isolated from~$0$.

\subsection{Convergence of the weight sequence  $\{\t_n, n \geq 0\}$}
\label{sec:convergence_weight}
In this subsection, the almost-sure convergence of the sequence $\{\t_n, n\geq
0 \}$ to $\t_\star$ is addressed. We prove in Section~\ref{proof:theo:cvg} the
following  convergence result:
\begin{theo}
  \label{theo:cvg}
  Assume A\ref{hyp:targetpi}, A\ref{hyp:kernel} and A\ref{hyp:stepsize}. Then, $\dps \PP\left( \lim_{n\to\infty} \t_n = \t_\star \right) = 1$.
\end{theo}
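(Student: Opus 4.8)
The plan is to combine the recurrence property of Theorem~\ref{recomp} with the classical machinery for stochastic approximation schemes with Markovian dynamics, using a Lyapunov function for the mean-field ODE $\dot\t = h(\t)$ whose unique zero in $\Theta$ is $\t_\star$. Recall from~\eqref{eq:mean-field} that $h(\t) = \bigl(\sum_j \t_\star(j)/\t(j)\bigr)^{-1}(\t_\star - \t)$, so $h$ points toward $\t_\star$ in a strong sense. A natural candidate Lyapunov function is the relative-entropy-type quantity $W(\t) = \sum_{i=1}^d \t_\star(i)\ln\bigl(\t_\star(i)/\t(i)\bigr)$, or alternatively $W(\t) = \tfrac12\sum_i (\t(i)-\t_\star(i))^2$; for either one, $\pscal{\nabla W(\t)}{h(\t)} < 0$ for $\t \neq \t_\star$, with the gradient taken on the hyperplane $\sum_i \t(i) = 1$. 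The first step is therefore to verify this strict Lyapunov inequality and to record the regularity of $h$ and $W$ on compact subsets of $\Theta$ (both are smooth there since $\t(i)$ stays bounded away from $0$).

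Next I would pass from recurrence to actual convergence. Write the recursion~\eqref{eq:SSA_formulation} as $\t_{n+1} = \t_n + \gamma_{n+1} h(\t_n) + \gamma_{n+1} e_{n+1}$ with noise $e_{n+1} = H(X_{n+1},\t_n) - h(\t_n)$, and decompose $e_{n+1}$ via the Poisson equation associated with $P_\t$: there exists a solution $\widehat{H}(\cdot,\t)$ to $\widehat H(x,\t) - P_\t\widehat H(x,\t) = H(x,\t) - h(\t)$, and the uniform geometric ergodicity of Proposition~\ref{prop:unifergo} (estimate~\eqref{eq:uniformeergodicite}) together with the boundedness of $H$ gives uniform bounds on $\widehat H$ and, crucially, on the Lipschitz-in-$\t$ modulus of $\widehat H$ on compact subsets of $\Theta$ (this is where one uses that $\t \mapsto P_\t$ and $\t \mapsto \pi_\t$ are Lipschitz on such compacts, which follows from~\eqref{eq:def:pitheta} since the densities are bounded above and below). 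Summation by parts then turns $\sum_k \gamma_{k+1} e_{k+1}$ into a convergent martingale term (controlled by $\sum \gamma_k^2 < \infty$ via A\ref{hyp:stepsize}\ref{hyp:stepsize:item3}) plus remainder terms that are summable in $\gamma_k^2$ or vanish because $\gamma_k \to 0$. This is exactly the content one expects from Proposition~\ref{prop:remainder:cvg} alluded to in the text, so I would invoke the standard stochastic-approximation convergence theorem (e.g.\ along the lines of Benveniste--M\'etivier--Priouret or Andrieu--Moulines--Priouret): once $\{\t_n\}$ is almost surely confined to a compact subset of $\Theta$ infinitely often and the noise is handled, the trajectory shadows the mean-field flow, $W(\t_n)$ converges, and the limit set is contained in $\{h = 0\} = \{\t_\star\}$.

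The one genuinely delicate point — and the step I expect to be the main obstacle — is upgrading the recurrence statement $\PP(\limsup_n \ut_n > 0) = 1$ of Theorem~\ref{recomp} to the stability statement that $\{\t_n\}$ stays in a \emph{fixed} compact subset of $\Theta$ for all large $n$, almost surely. Recurrence only says the sequence returns infinitely often to some compact (and a priori the compact may depend on $\omega$); one must rule out excursions toward the boundary $\partial\Theta$ that, although they always come back, could come back arbitrarily close to the boundary infinitely often. The standard fix is to use the Lyapunov function as a confinement device: on any excursion starting from a sublevel set $\{W \le M\}$, the combination of the strict decrease of $W$ along $h$ and the smallness of the cumulative noise over a single excursion (again via the Poisson-equation/summation-by-parts control, now applied on a random time window) shows the excursion cannot increase $W$ by more than a vanishing amount, so that in fact $\{\t_n\}$ eventually enters and never leaves a slightly enlarged sublevel set. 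Making this rigorous requires carefully tracking that the per-excursion noise bound is uniform over the relevant compacts and tends to $0$, which is where A\ref{hyp:stepsize}\ref{hyp:pasdecroiss}--\ref{hyp:stepsize:item3} and the uniform ergodicity are used together. Once stability is in hand, the convergence $\t_n \to \t_\star$ follows from the ODE method as described above, and the almost-sure statement of Theorem~\ref{theo:cvg} is complete.
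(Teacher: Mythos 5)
Your proposal follows essentially the same route as the paper: the relative-entropy Lyapunov function $V(\t)=\sum_i\t_\star(i)\ln(\t_\star(i)/\t(i))$ (Proposition~\ref{prop:Lyapunov}), the Poisson-equation decomposition of the noise into a martingale plus telescoping/regularity remainders controlled by $\sum_n\gamma_n^2<\infty$ and Lemmas~\ref{lem:RegularitePi}--\ref{lem:RegularitePoisson} (Proposition~\ref{prop:remainder:cvg}), and the Andrieu--Moulines--Priouret theorems to upgrade the recurrence of Theorem~\ref{recomp} to stability and then convergence. One caveat: your alternative quadratic choice $W(\t)=\tfrac12\sum_i(\t(i)-\t_\star(i))^2$ would not do for the confinement step, since its sublevel sets are not compact subsets of $\Theta$ (they touch the boundary $\{\min_i\t(i)=0\}$, where the Lipschitz estimates on $P_\t$ and $\hatH_\t$ blow up); the entropy function is needed precisely because $\{V\le M\}$ is compact in $\Theta$, as in Proposition~\ref{prop:Lyapunov}\ref{prop:Lyapunov:item2}.
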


The proof relies on~\cite{andrieu:moulines:priouret:2005} which provides
sufficient conditions for convergence of stochastic approximation techniques.
The first step consists in rewriting the weight
update~\eqref{eq:SSA_formulation} as
\begin{equation}
  \label{eq:reformulation_using_mean_field}
  \t_{n+1} = \t_n + \gamma_{n+1} h(\t_n) + \gamma_{n+1} \Big(H(X_{n+1},\t_n) -h(\t_n) \Big) \eqsp,
\end{equation}
where $h$ is given by (\ref{eq:mean-field}).  The heuristic idea is that, if
the step-size quickly is sufficiently small, and the Metropolis dynamics converges sufficiently
fast to equilibrium for $\t$ fixed (a result given by
Proposition~\ref{prop:unifergo}), the update of $\t_n$ is indeed close to an
update with the averaged drift $h(\t_n)$. However, in order for the updates of
the weights to be non-negligible, the step-sizes should not be too small. The
balance between these two opposite effects is encoded in the conditions
A\ref{hyp:stepsize}\ref{hyp:stepsize:item2}-\ref{hyp:stepsize:item3}.

{F}rom a technical viewpoint, the proof of the theorem relies on two main tools.
The first one (see Proposition~\ref{prop:Lyapunov}) is to show that the
function $V:\Theta \to \Rset_+$ given by
\begin{equation}
  \label{eq:def:Lyapunov:V}
  V(\t) \eqdef \sum_{i=1}^d \t_\star(i) \log\left(
  \frac{\t_\star(i)}{\t(i)}\right)
\end{equation}
is a Lyapunov function with respect to the mean-field $h$, namely $\langle
\nabla V(\theta), h(\theta) \rangle < 0$ for $\t \neq \t_\star$ and $\langle
\nabla V(\theta_\star), h(\theta_\star) \rangle = 0$ (here, $\langle
\cdot,\cdot\rangle$ denotes the scalar product in $\Rset^d$).  This motivates
the fact that $\{\t_n, n\geq 0\}$ may converge to $\t_\star$.  The second
important result establishes that the remainder term $\gamma_{n+1}
\left(H(X_{n+1},\t_n) -h(\t_n) \right)$
in~\eqref{eq:reformulation_using_mean_field} vanishes in some sense (see
Proposition~\ref{prop:remainder:cvg}). This step is quite technical and
requires regularity-in-$\t$ of the transition kernels $P_\t$ and the invariant
distributions $\pi_\t$ (see Lemmas~\ref{lem:RegularitePi}
and~\ref{lem:RegulariteKernel}). The conclusion then follows
from~\cite[Theorem~2.3]{andrieu:moulines:priouret:2005} and
Theorem~\ref{recomp}.

\subsection{Ergodicity and Law of large numbers for the samples $\{X_k, k \geq 0\}$}
\label{sec:convergence_averages}

In this subsection, we discuss the asymptotic behavior of the chain $\{X_k, k
\geq 0\}$. The main result is the following (see Section~\ref{proof:theo:ergoLLN:X}
for the proof).

\begin{theo}
\label{theo:ergoLLN:X}
Assume A\ref{hyp:targetpi}, A\ref{hyp:kernel} and A\ref{hyp:stepsize}. Then, for
any bounded measurable function $f$,
  \begin{align}
    & \lim_{n\to\infty} \PE\left[f(X_n)\right] = \int_\Xset f(x) \, \pi_{\t_\star}(x) \, 
    \lambda(dx)
    \eqsp, \label{theo:ergoLLN:X:item1} \\
    & \frac{1}{n} \sum_{k=1}^n f(X_k) \aslim \int_\Xset f(x) \, \pi_{\t_\star}(x) \, \lambda(dx) \eqsp. \label{theo:ergoLLN:X:item2}
  \end{align}
\end{theo}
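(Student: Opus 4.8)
The plan is to place Theorem~\ref{theo:ergoLLN:X} within the framework of adaptive Markov chain Monte Carlo, using three ingredients that are available by the time this theorem is proved: the almost-sure convergence $\t_n \to \t_\star$ (Theorem~\ref{theo:cvg}), the geometric ergodicity of the kernels $P_\t$ which is \emph{uniform in $\t$} (Proposition~\ref{prop:unifergo}), and the regularity in $\t$ of $\t\mapsto P_\t$ and $\t\mapsto\pi_\t$ (Lemmas~\ref{lem:RegularitePi} and~\ref{lem:RegulariteKernel}). A fourth, elementary observation is the \emph{diminishing adaptation} property: since $\t_{n+1}-\t_n = \gamma_{n+1} H(X_{n+1},\t_n)$ with $\sup_{\Xset\times\Theta}|H|\le 1$, we get $|\t_{n+1}-\t_n|\le\gamma_{n+1}\to 0$, hence by Lemma~\ref{lem:RegulariteKernel} the kernels used at consecutive steps become arbitrarily close in total variation.

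For the convergence in distribution~\eqref{theo:ergoLLN:X:item1}, fix $f$ with $\sup_\Xset|f|\le1$ and $\eps>0$, and use Proposition~\ref{prop:unifergo} to choose $k$ with $2(1-\rho)^k\le\eps$, so that $\sup_{\t,x}\bigl|P_\t^k f(x)-\pi_\t(f)\bigr|\le\eps$. Conditioning on $\F_{n-k}$, I would compare the trajectory $X_{n-k+1},\dots,X_n$ of the algorithm (generated by the path-dependent kernels $P_{\t_{n-k}},\dots,P_{\t_{n-1}}$) with the trajectory started from $X_{n-k}$ and run with the frozen kernel $P_{\t_\star}$, coupling them one step at a time. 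The resulting error satisfies $\bigl|\PE[f(X_n)]-\PE[P_{\t_\star}^k f(X_{n-k})]\bigr|\le 2\,\PP(\text{coupling fails})$, and the latter is bounded by $k$ times $\sup_{m\ge n-k}\sup_x\|P_{\t_m}(x,\cdot)-P_{\t_\star}(x,\cdot)\|_\tv\le C\sup_{m\ge n-k}|\t_m-\t_\star|$ (Lemma~\ref{lem:RegulariteKernel}); since $\t_n\to\t_\star$ almost surely, $\sup_{m\ge n-k}|\t_m-\t_\star|$ tends to $0$ almost surely (and is bounded), so by dominated convergence this probability tends to $0$ as $n\to\infty$ for $k$ fixed. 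Combining with $\bigl|\PE[P_{\t_\star}^k f(X_{n-k})]-\pi_{\t_\star}(f)\bigr|\le\eps$, letting $n\to\infty$ and then $\eps\to0$ yields~\eqref{theo:ergoLLN:X:item1}. (Alternatively one invokes the general ergodicity theorem for adaptive MCMC of~\cite{fort:moulines:priouret:2011}, whose containment hypothesis is trivial here because the convergence in Proposition~\ref{prop:unifergo} is uniform in $\t$.)

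For the strong law~\eqref{theo:ergoLLN:X:item2}, I would use a Poisson-equation/martingale decomposition. For each $\t$, set $\widehat{f}_\t\eqdef\sum_{m\ge0}P_\t^m\bigl(f-\pi_\t(f)\bigr)$; Proposition~\ref{prop:unifergo} makes the series converge with $\sup_\t\|\widehat{f}_\t\|_\infty<\infty$, it solves $\widehat{f}_\t-P_\t\widehat{f}_\t=f-\pi_\t(f)$, and propagating the Lipschitz bounds of Lemmas~\ref{lem:RegularitePi}--\ref{lem:RegulariteKernel} through the geometric series shows that $\t\mapsto\widehat{f}_\t$ and $\t\mapsto P_\t\widehat{f}_\t$ are Lipschitz uniformly on $\Theta$. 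Then
\[
f(X_k)-\pi_{\t_{k-1}}(f)=\underbrace{\widehat{f}_{\t_{k-1}}(X_k)-P_{\t_{k-1}}\widehat{f}_{\t_{k-1}}(X_{k-1})}_{\Delta M_k}+\underbrace{P_{\t_{k-1}}\widehat{f}_{\t_{k-1}}(X_{k-1})-P_{\t_{k-1}}\widehat{f}_{\t_{k-1}}(X_k)}_{R_k}\eqsp.
\]
By~\eqref{eq:MarkovDynamic}, $(\sum_{k=1}^n\Delta M_k)_{n\ge1}$ is an $\{\F_n\}$-martingale with bounded increments, so $n^{-1}\sum_{k=1}^n\Delta M_k\to0$ almost surely. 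For the remainder, an Abel summation gives $\sum_{k=1}^n R_k=P_{\t_0}\widehat{f}_{\t_0}(X_0)-P_{\t_{n-1}}\widehat{f}_{\t_{n-1}}(X_n)+\sum_{k=1}^{n-1}\bigl(P_{\t_k}\widehat{f}_{\t_k}(X_k)-P_{\t_{k-1}}\widehat{f}_{\t_{k-1}}(X_k)\bigr)$; the boundary terms are bounded, and each summand of the last sum is $O(\gamma_k)$ because $|\t_k-\t_{k-1}|\le\gamma_k$ and $\t\mapsto P_\t\widehat{f}_\t$ is Lipschitz. Since $\gamma_k\to0$ by A\ref{hyp:stepsize}\ref{hyp:pasdecroiss}, $n^{-1}\sum_{k=1}^n\gamma_k\to0$ (Cesàro), hence $n^{-1}\sum_{k=1}^n R_k\to0$ almost surely. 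Thus $n^{-1}\sum_{k=1}^n\bigl(f(X_k)-\pi_{\t_{k-1}}(f)\bigr)\to0$ almost surely; finally $\pi_{\t_{k-1}}(f)\to\pi_{\t_\star}(f)$ almost surely by Theorem~\ref{theo:cvg} and the continuity of $\t\mapsto\pi_\t(f)$ (Lemma~\ref{lem:RegularitePi}), so a last Cesàro argument gives~\eqref{theo:ergoLLN:X:item2}.

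The main obstacle in both parts is not the probabilistic skeleton above but the quantitative regularity-in-$\t$ estimates: showing that the Poisson solution $\widehat{f}_\t$ (and $P_\t\widehat{f}_\t$) is bounded and Lipschitz uniformly on $\Theta$ requires carefully combining the Lipschitz bounds on $\t\mapsto P_\t$ and $\t\mapsto\pi_\t$ with the uniform geometric decay, which is exactly where Lemmas~\ref{lem:RegularitePi}--\ref{lem:RegulariteKernel} and the uniform minorization of Proposition~\ref{prop:unifergo} are needed. Making the step-by-step coupling of~\eqref{theo:ergoLLN:X:item1} fully rigorous (handling the path-dependence of the successive kernels $P_{\t_m}$) also requires a little care, but it is standard in the adaptive MCMC literature.
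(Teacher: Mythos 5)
Your proof is correct, but it takes a more self-contained route than the paper. The paper proves both statements by invoking the general adaptive-MCMC results of~\cite{fort:moulines:priouret:2011}: \eqref{theo:ergoLLN:X:item1} follows from their Theorem~2.1 after checking invariance, a diminishing-adaptation condition (via Corollary~\ref{coro:RegulariteKernel}) and the convergence $\pi_{\t_n}(f)\to\pi_{\t_\star}(f)$; \eqref{theo:ergoLLN:X:item2} follows from their Theorem~2.7 after checking a drift/minorization condition with $V=1$ and the summability $\sum_k k^{-1}\sup_x\|P_{\t_k}(x,\cdot)-P_{\t_{k-1}}(x,\cdot)\|_\tv\leq C\sum_k\gamma_k/k<\infty$, which uses A\ref{hyp:stepsize}\ref{hyp:stepsize:item3}. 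You instead unpack these black boxes: a finite-horizon freezing/coupling argument at $\t_\star$ for the convergence in distribution (the paper's verification freezes at $\t_{n-r_\eps}$ instead, but the mechanism is identical, and you correctly note that the uniform ergodicity of Proposition~\ref{prop:unifergo} trivializes the containment issue), and a Poisson-equation martingale decomposition plus Abel summation for the strong law --- the same skeleton the paper uses in Proposition~\ref{prop:remainder:cvg} and in the proof of Theorem~\ref{theo:CLT:theta}, here applied to $f$ rather than to $H$. Your SLLN argument only needs $\gamma_k\to0$ and a Ces\`aro step rather than $\sum_k\gamma_k^2<\infty$, so it is marginally more elementary at that point; what it costs you is having to establish the regularity of $\t\mapsto\widehat f_\t$ yourself rather than inheriting it from the cited theorem.

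One phrasing to fix: $\t\mapsto\widehat f_\t$ and $\t\mapsto P_\t\widehat f_\t$ are \emph{not} Lipschitz uniformly on $\Theta$ in the Euclidean metric --- the constants in Lemmas~\ref{lem:RegularitePi}--\ref{lem:RegulariteKernel} and in Lemma~\ref{lem:RegularitePoisson} involve the ratios $|1-\t'(i)/\t(i)|$ and hence blow up like $1/\inf_i\t(i)$ near $\partial\Theta$. Your bound $|(P_{\t_k}\widehat f_{\t_k}-P_{\t_{k-1}}\widehat f_{\t_{k-1}})(X_k)|=O(\gamma_k)$ is nevertheless correct, because the multiplicative form of the update, $\t_k(i)/\t_{k-1}(i)=1+\gamma_kY_k(i)$ with $|Y_k(i)|\leq1$ (see~\eqref{majpoids}), makes consecutive iterates $\gamma_k$-close in exactly the ratio sense that these lemmas require, independently of how small $\inf_i\t_{k-1}(i)$ is; this is the same observation that yields Corollary~\ref{coro:RegulariteKernel} and the bound on $R^{(2)}$ in Proposition~\ref{prop:remainder:cvg}. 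State the estimate that way and the argument is complete.
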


This theorem shows that the distribution of the sample $X_n$
 converges to $\pi_{\t_\star}(x) \lambda(dx)$, where, we recall
\[\pi_{\t_\star}(x)=
\frac{1}{d} \sum_{i=1}^d \frac{\pi(x)}{\t_\star(i)} \un_{\Xset_i}(x)
\eqsp.
\]
Moreover, the empirical mean of the samples $\{f(X_k), k \geq 0\}$ converges to $\int
f \, \pi_{\t_\star} \, d\lambda$. Hence, although the weights $\t_n$ evolve in the adaptive
algorithm, ergodic averages can be thought of as averages with fixed weights $\t_\star$.

In many practical cases, averages with respect to~$\pi$ are of interest.  In
this case, the Wang-Landau procedure is used as some adaptive importance
sampling strategy. In order to obtain averages according to~$\pi$ along a
trajectory of the algorithm, some reweighting has to be considered. A natural
strategy is to use some stratified-type weighted sum of the samples $\{X_k, \,
k\geq 1\}$:
\[
\mathcal{I}^{}_n(f) \eqdef d \sum_{i=1}^d \t_n(i) \left(  
\frac{1}{n}\dps \sum_{k=1}^nf(X_k) \un_{\Xset_i}(X_k)\right).
\] 
We prove in Section~\ref{proof:theo:ergoLLN:Stratified} the following result:
\begin{theo}
  \label{theo:ergoLLN:Stratified}
Assume A\ref{hyp:targetpi}, A\ref{hyp:kernel} and A\ref{hyp:stepsize}. Then for
any bounded measurable function $f$,
\begin{align}
  \label{theo:ergoLLN:Stratified:item1}
  & \lim_{n\to\infty} d \, \PE\left[ \sum_{i=1}^d \t_n(i) \, f(X_n) \, 
    \un_{\Xset_i}(X_n) \right] = \int_\Xset f(x) \, \pi(x) \, \lambda(dx) \eqsp, \\
  \label{theo:ergoLLN:Stratified:item2}
 & \mathcal{I}_n(f) \aslim \int_\Xset f(x) \, \pi(x) \, \lambda(dx) \eqsp.
  \end{align}
\end{theo}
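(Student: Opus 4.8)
The plan is to reduce both assertions to Theorem~\ref{theo:ergoLLN:X} together with the almost-sure convergence $\t_n \to \t_\star$ from Theorem~\ref{theo:cvg}. The starting observation is the algebraic identity
\[
d \sum_{i=1}^d \t_\star(i)\, f(x)\, \un_{\Xset_i}(x) = d\sum_{i=1}^d \t_\star(i)\, \frac{f(x)\un_{\Xset_i}(x)}{\t_\star(i)} \cdot \frac{1}{d}
\]
which, after integration against $\pi_{\t_\star}(x)\lambda(dx) = \frac{1}{d}\sum_{i=1}^d \frac{\pi(x)}{\t_\star(i)}\un_{\Xset_i}(x)\lambda(dx)$, gives
\[
\int_\Xset \Big( d\sum_{i=1}^d \t_\star(i)\, f(x)\un_{\Xset_i}(x)\Big)\, \pi_{\t_\star}(x)\,\lambda(dx)
= \int_\Xset f(x)\,\pi(x)\,\lambda(dx)\eqsp.
\]
Thus both limits claimed in the theorem are exactly what one obtains by applying Theorem~\ref{theo:ergoLLN:X} to the bounded measurable test function $g_\star(x) \eqdef d\sum_{i=1}^d \t_\star(i)\, f(x)\un_{\Xset_i}(x)$; the whole problem is to pass from the unknown weights $\t_n$ appearing in the statement to the ``frozen'' weights $\t_\star$ in $g_\star$, controlling the error $\t_n - \t_\star$.

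For the almost-sure statement~\eqref{theo:ergoLLN:Stratified:item2}, I would write
\[
\mathcal{I}_n(f) = d\sum_{i=1}^d \t_\star(i)\Big(\tfrac1n\sum_{k=1}^n f(X_k)\un_{\Xset_i}(X_k)\Big)
+ d\sum_{i=1}^d \big(\t_n(i)-\t_\star(i)\big)\Big(\tfrac1n\sum_{k=1}^n f(X_k)\un_{\Xset_i}(X_k)\Big)\eqsp.
\]
The first sum is $\tfrac1n\sum_{k=1}^n g_\star(X_k)$, which converges a.s.\ to $\int f\pi\,d\lambda$ by~\eqref{theo:ergoLLN:X:item2} applied to $g_\star$. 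In the second sum, each Cesàro average $\tfrac1n\sum_{k=1}^n f(X_k)\un_{\Xset_i}(X_k)$ is bounded by $\sup|f|$ uniformly in $n$, while $\t_n(i)-\t_\star(i)\to 0$ a.s.\ by Theorem~\ref{theo:cvg}; hence the second sum vanishes a.s., and~\eqref{theo:ergoLLN:Stratified:item2} follows.

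For the convergence in expectation~\eqref{theo:ergoLLN:Stratified:item1}, I would use the same splitting at time $n$:
\[
d\,\PE\Big[\sum_{i=1}^d \t_n(i) f(X_n)\un_{\Xset_i}(X_n)\Big]
= \PE\big[g_\star(X_n)\big] + d\sum_{i=1}^d \PE\Big[\big(\t_n(i)-\t_\star(i)\big) f(X_n)\un_{\Xset_i}(X_n)\Big]\eqsp.
\]
The first term converges to $\int f\pi\,d\lambda$ by~\eqref{theo:ergoLLN:X:item1} applied to $g_\star$. For the second term, the summand is bounded in absolute value by $\sup|f|\,\PE[|\t_n(i)-\t_\star(i)|]$; since $|\t_n(i)-\t_\star(i)|\le 1$ and $\t_n(i)\to\t_\star(i)$ almost surely, dominated convergence gives $\PE[|\t_n(i)-\t_\star(i)|]\to 0$, so this term vanishes. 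I expect the main (and only real) obstacle to be making sure the hypotheses of Theorems~\ref{theo:cvg} and~\ref{theo:ergoLLN:X} are genuinely in force---they are, since we assume A\ref{hyp:targetpi}, A\ref{hyp:kernel} and the full A\ref{hyp:stepsize}---and checking that $g_\star$ is indeed bounded and measurable, which is immediate from $\inf_i\t_\star(i)>0$ and the measurability of the strata; no delicate estimate beyond the uniform boundedness of $f$ and the a.s.\ convergence of the weights is needed.
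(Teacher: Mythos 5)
Your proposal is correct and follows essentially the same route as the paper: both results are obtained by splitting off the term with frozen weights $\t_\star$, applying Theorem~\ref{theo:ergoLLN:X} to it (your single test function $g_\star$ is just the linear combination of the $f\un_{\Xset_i}$ that the paper treats termwise), and killing the remainder via Theorem~\ref{theo:cvg} together with boundedness of $f$ (plus dominated convergence for the statement in expectation). No gaps.
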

There are of course many other reweighting strategies. We have
discussed one possible choice, but we do not claim that the above estimator
is the best one.

\subsection{Central limit theorem for the weight sequence}
\label{sec:asymptotic_variance_weight}

In this section, we state a Central Limit Theorem on the error $\t_n -
\t_\star$
.  We show that the rate of
convergence depends upon the step-size sequence $\{\gamma_n, n\geq 1 \}$ and
discuss an averaging strategy in order to reach the optimal rate of
convergence. An additional assumption is required on the sequence  $\{\gamma_n, n\geq 1 \}$: \debutA
\item \label{hyp:CLT} $\lim_n \gamma_n \sqrt{n} = 0$, and one of the following
  condition holds:
  \begin{enumerate}[\quad (i)]
  \item \label{hyp:CLT:item1} $\log(\gamma_n / \gamma_{n+1}) = \mathrm{o}(\gamma_n)$;
  \item \label{hyp:CLT:item2} $\log(\gamma_n / \gamma_{n+1}) \sim \gamma_n / \gamma_\star$
    with $\gamma_\star > d/2$.
  \end{enumerate}
\finA 
The latter conditions are satisfied
for sequences $\gamma_n = \gamma_\star /n^\alpha$, when $\alpha \in (1/2, 1)$ for~(i), or when $\alpha=1$ and $\gamma_\star > d/2$ for~(ii).
Under this additional assumption, the following result holds (see Section~\ref{proof:theo:CLT:theta} for the
proof).  

\begin{theo}
  \label{theo:CLT:theta}
  Assume that A\ref{hyp:targetpi}, A\ref{hyp:kernel}, A\ref{hyp:stepsize} and
  A\ref{hyp:CLT} hold.  Then $\{ \gamma_n^{-1/2} \left( \t_n - \t_\star
  \right), \, n\geq 1 \}$ converges in distribution to a centered Gaussian
  distribution with variance-covariance matrix $\sigma^2 U_\star$ where
  $\sigma^2 = d/2$ in case A\ref{hyp:CLT}\eqref{hyp:CLT:item1} and $\sigma^2 = \gamma_\star d /(2 \gamma_\star
  -d)$ in case A\ref{hyp:CLT}\eqref{hyp:CLT:item2},
  \begin{equation}
    \label{eq:covar_theta}
    U_\star \eqdef \int_\Xset \left\{ \hatH_{\t_\star}(x) \hatH_{\t_\star}^T(x) -
    P_{\t_\star} \hatH_{\t_\star}(x)  \  P_{\t_\star} \hatH_{\t_\star}^T(x) \right\} \, 
    \pi_{\t_\star}(x) \, \lambda(dx)\eqsp,
  \end{equation}
  and 
  \[
  \hatH_{\t_\star}\eqdef \sum_{n \geq 0} P_{\t_\star}^n \left(I -
    \pi_{\t_\star} \right) H(\cdot, \t_\star) = \sum_{n \geq 0} P_{\t_\star}^n
  \left( H(\cdot, \t_\star) - h(\t_\star) \right)\eqsp.
  \]
\end{theo}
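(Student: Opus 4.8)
\emph{Proof strategy and linearisation.} The plan is to deduce the statement from the almost sure convergence $\t_n\to\t_\star$ (Theorem~\ref{theo:cvg}), by rewriting~\eqref{eq:reformulation_using_mean_field} as a linear stochastic approximation recursion near $\t_\star$ whose noise is, up to a negligible remainder, a martingale-increment sequence, and then applying a classical central limit theorem for stochastic approximation with Markovian dynamics (as in~\cite{benveniste:metivier:priouret:1987}). Set $u_n\eqdef\t_n-\t_\star$; since every increment of $\t_n$ has zero sum, $u_n$ lies in the hyperplane $T\eqdef\{v\in\Rset^d:\sum_{i=1}^d v_i=0\}$. Differentiating~\eqref{eq:mean-field} gives $\nabla h(\t_\star)=-\tfrac1d\,\mathrm{Id}_d$, so that~\eqref{eq:reformulation_using_mean_field} reads
\[
u_{n+1}=u_n-\frac{\gamma_{n+1}}{d}\,u_n+\gamma_{n+1}\,\varrho(u_n)+\gamma_{n+1}\left(H(X_{n+1},\t_n)-h(\t_n)\right),
\]
with $\varrho(u)=h(\t_\star+u)+\tfrac1d u=\mathrm{O}(|u|^2)$ near the origin. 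The assumption $\lim_n\gamma_n\sqrt n=0$ guarantees that, at the normalisation $\gamma_n^{-1/2}$, the deterministic and higher-order terms (the error $\gamma_{n+1}\varrho(u_n)$ and the boundary contributions of the noise) do not contribute in the limit.

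\emph{The Poisson (resolvent) decomposition.} By the uniform geometric ergodicity of $P_\t$ (Proposition~\ref{prop:unifergo}) and the identity $\int_\Xset H(x,\t)\,\pi_\t(x)\,\lambda(dx)=h(\t)$, the resolvent $\hatH_\t=\sum_{k\geq0}P_\t^k\left(H(\cdot,\t)-h(\t)\right)$ is well defined, uniformly bounded in $\t$, and solves $\hatH_\t-P_\t\hatH_\t=H(\cdot,\t)-h(\t)$; moreover $\t\mapsto\hatH_\t$, hence $\t\mapsto P_\t\hatH_\t$, is Lipschitz, which follows from the regularity of $\t\mapsto\pi_\t$ and $\t\mapsto P_\t$ (Lemmas~\ref{lem:RegularitePi} and~\ref{lem:RegulariteKernel}) together with the geometric bound. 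Writing
\[
H(X_{n+1},\t_n)-h(\t_n)=\left(\hatH_{\t_n}(X_{n+1})-P_{\t_n}\hatH_{\t_n}(X_n)\right)+\left(P_{\t_n}\hatH_{\t_n}(X_n)-P_{\t_n}\hatH_{\t_n}(X_{n+1})\right),
\]
the first bracket $M_{n+1}$ is an $\F_n$-martingale increment, whereas the contribution of the second bracket, once multiplied by $\gamma_{n+1}$ and summed, is handled by an Abel summation: the resulting sum, whose terms are controlled by $|\gamma_{k+1}-\gamma_{k+2}|$, by $|\t_{k+1}-\t_k|=\mathrm{O}(\gamma_{k+1})$ (through the Lipschitz regularity of $\t\mapsto P_\t\hatH_\t$) and by boundary terms, is $\mathrm{o}(\gamma_n^{1/2})$ precisely under A\ref{hyp:stepsize} and A\ref{hyp:CLT}.

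\emph{Identification of the covariance and conclusion.} The conditional covariance of the martingale increment equals $\PE\left[M_{n+1}M_{n+1}^T\mid\F_n\right]=P_{\t_n}\left(\hatH_{\t_n}\hatH_{\t_n}^T\right)(X_n)-\left(P_{\t_n}\hatH_{\t_n}\right)\left(P_{\t_n}\hatH_{\t_n}\right)^T(X_n)$; using $\t_n\to\t_\star$, the ergodicity of $\{X_n\}$ towards $\pi_{\t_\star}\lambda$ (Proposition~\ref{prop:unifergo} and Theorem~\ref{theo:ergoLLN:X}), and the invariance identity $\int_\Xset P_{\t_\star}(\hatH_{\t_\star}\hatH_{\t_\star}^T)\,d(\pi_{\t_\star}\lambda)=\int_\Xset\hatH_{\t_\star}\hatH_{\t_\star}^T\,d(\pi_{\t_\star}\lambda)$, the Cesàro averages of these conditional covariances converge to $U_\star$ of~\eqref{eq:covar_theta}; the Lindeberg condition is immediate since $\hatH_\t$ is uniformly bounded. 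Applying the stochastic approximation central limit theorem with linear part $L=-\tfrac1d\,\mathrm{Id}_d$ acting on $T$, the required stability condition is that $L+\eta\,\mathrm{Id}_d$ be a Hurwitz matrix (all eigenvalues with negative real part), with $\eta=0$ under A\ref{hyp:CLT}\eqref{hyp:CLT:item1} (automatic) and $\eta=1/(2\gamma_\star)$ under A\ref{hyp:CLT}\eqref{hyp:CLT:item2} --- for which $-1/d+1/(2\gamma_\star)<0$ is exactly the hypothesis $\gamma_\star>d/2$. The limiting variance then solves $(L+\eta\,\mathrm{Id}_d)\Sigma+\Sigma(L+\eta\,\mathrm{Id}_d)^T+U_\star=0$, and since $L$ is scalar this gives $\Sigma=\left(2(1/d-\eta)\right)^{-1}U_\star=\sigma^2U_\star$, with $\sigma^2=d/2$ in case~(i) and $\sigma^2=\gamma_\star d/(2\gamma_\star-d)$ in case~(ii), which is the announced result.

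\emph{Main obstacle.} The delicate point is the negligibility at rate $\gamma_n^{1/2}$ of the Abel-summed remainder of the resolvent decomposition: it requires a quantitative Lipschitz control of $\t\mapsto P_\t\hatH_\t$ --- which, through the geometric series defining $\hatH_\t$, reduces to the regularity estimates of Lemmas~\ref{lem:RegularitePi} and~\ref{lem:RegulariteKernel} --- used jointly with the fine step-size conditions, notably $\log(\gamma_n/\gamma_{n+1})=\mathrm{o}(\gamma_n)$, respectively $\sim\gamma_n/\gamma_\star$, which govern the increments $|\gamma_{k+1}-\gamma_{k+2}|$ arising in the summation by parts and, equivalently, fix the effective drift correction $\eta$.
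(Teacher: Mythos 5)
Your proposal follows essentially the same route as the paper: the same resolvent decomposition $H(X_{n+1},\t_n)-h(\t_n)=\bigl(\hatH_{\t_n}(X_{n+1})-P_{\t_n}\hatH_{\t_n}(X_n)\bigr)+\bigl(P_{\t_n}\hatH_{\t_n}(X_n)-P_{\t_n}\hatH_{\t_n}(X_{n+1})\bigr)$ into a bounded martingale increment plus a telescoping remainder, the same use of $\nabla h(\t_\star)=-d^{-1}\mathrm{Id}$, and the same Lyapunov-equation computation of $\sigma^2$ (your formula $\sigma^2=\bigl(2(1/d-\eta)\bigr)^{-1}$ with $\eta=0$ or $1/(2\gamma_\star)$ reproduces the paper's two cases exactly). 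The treatment of the remainder by Abel summation, using the Lipschitz control of $\t\mapsto P_\t\hatH_\t$ from Lemmas~\ref{lem:RegularitePi} and~\ref{lem:RegulariteKernel}, is also what the paper does for its condition~C3.

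The one step you assert rather than prove is the convergence of the conditional covariances, and as written it is the genuine gap. You state that ``the Ces\`aro averages of these conditional covariances converge to $U_\star$'', but the quantity $\PE[M_{k+1}M_{k+1}^T\mid\F_k]=\Xi(X_k,\t_k)$ does not converge (it fluctuates with $X_k$ forever), and for step sizes $\gamma_n=\gamma_\star/n^\alpha$ with $\alpha<1$ the condition actually required by the CLT is the vanishing of the $\gamma_n$-weighted partial sums, $\gamma_n\sum_{k\le n}\bigl(\Xi(X_k,\t_k)-U_\star\bigr)\to 0$, which is strictly stronger than Ces\`aro convergence because $\gamma_n\,n\to\infty$. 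The paper closes this by introducing a second Poisson equation, for the matrix-valued function $x\mapsto\Xi(x,\t_\star)$, splitting the fluctuation into an a.s.-vanishing part (driven by $\t_k\to\t_\star$ and the regularity lemmas) plus a martingale whose partial sums are $\mathrm{O}(\sqrt n)$ in $L^1$; the hypothesis $\gamma_n\sqrt n\to 0$ is what kills that martingale part. In your write-up $\gamma_n\sqrt n\to 0$ is only invoked for the drift remainder $\varrho(u_n)$ and for boundary terms, so this second Poisson-equation argument (or an equivalent averaging argument supplied by whichever CLT you invoke from~\cite{benveniste:metivier:priouret:1987}) needs to be made explicit for the proof to be complete.
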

Notice that $\hatH_{\t_\star}$ is the Poisson solution associated
to the pair~$(P_{\t_\star}, H(\cdot, \t_\star))$, namely $\hatH_{\t_\star}$ is
a solution to: find $g: \Xset \to \Rset$ such that
\[
g - P_{\t_\star} g = H(\cdot, \t_\star) - \int_\Xset H(x, \t_\star) \, \pi_{\t_\star}(x) \, 
\lambda(dx) \eqsp. 
\]
By Proposition~\ref{prop:unifergo} and the results of \cite[Chapter~17]{meyn:tweedie:2009}, 
such a function exists and is unique up to an additive constant.

Theorem~\ref{theo:CLT:theta} shows that the rate of convergence depends upon the
step-size sequence $\{\gamma_n, n\geq 1 \}$: when $\gamma_n =
\gamma_\star / n^\alpha$ for $\alpha \in (1/2, 1]$, the maximal rate
of convergence is reached with $\alpha=1$ and the rate is
$\mathrm{O}(n^{-1/2})$.  

When $\gamma_n = \gamma_\star/n$, Theorem~\ref{theo:CLT:theta} states
that $\{ \sqrt{n} \left( \t_n - \t_\star
  \right), \, n\geq 1 \}$ converges in distribution to a centered Gaussian
  distribution with variance-covariance matrix $d  U_\star \gamma_\star^2 /(2 \gamma_\star
  -d)$, which is minimum for $\gamma_\star=d$ (in which case the
  variance-covariance matrix is $d^2  U_\star$). It is actually not
  possible to further reduce the asymptotic variance by introducing a gain matrix $\Gamma$ in the
  algorithm~(\ref{eq:SSA_formulation}) which yields the update
\[
\check \t_{n+1} = \check \t_{n} + \gamma_{n+1} \Gamma \ H(X_{n+1}, \check \t_n)
\eqsp.
\]
It is proved in~\cite[Proposition 4 p.112]{benveniste:metivier:priouret:1987}
that for a large family of gain matrices (so-called ``admissible gains'') a Central
Limit Theorem still holds for the sequence of random variables $\{\sqrt{n}(\t_n
- \t_\star),n \geq 0\}$ and the minimal variance-covariance matrix $d^2
U_\star$  is indeed
reached for $\Gamma = d \gamma_\star^{-1} {\rm
  Id}$.

{F}rom a practical point of view, it is known that stochastic approximation
algorithms are more efficient when the step-size sequence decreases at a slow
rate: in the polynomial schedule, this means that $\gamma_n =
\gamma_\star/n^\alpha$ with $\alpha$ close to $1/2$. As shown by
Theorem~\ref{theo:CLT:theta}, this yields a slower rate of convergence.
Nevertheless, combining Wang-Landau update with an {\em averaging technique}
allows to reach the optimal rate of convergence and the optimal
variance-covariance matrix: by applying~\cite[Theorem 1.4]{fort:2012}, it can
be proved that $\{\sqrt{n} \left( \frac{1}{n} \sum_{k=1}^n \t_k - \t_\star
\right), n\geq 1 \}$ converges in distribution to a centered Gaussian
distribution with variance-covariance matrix $d^2 U_\star$. The proof of this
claim is along the same lines as the proof of Theorem~\ref{theo:CLT:theta} and
details are therefore omitted.

\section{Proofs}
\label{sec:proof:cvg}
In the following, we denote by $\lfloor x \rfloor$ the integer part of $x \in \mathbb{R}$ namely
the integer such that $\lfloor x \rfloor \le x < \lfloor x \rfloor +1$. We will
also use the notation $\lceil x \rceil$ for the integer such that $ \lceil x
\rceil - 1 < x \le \lceil x \rceil$.

\subsection{Proof of Proposition~\ref{prop:unifergo}}
\label{proof:prop:unifergo}
We prove (\ref{eq:minorization_bound}); the second assertion follows by
\cite[Theorem~16.2.4]{meyn:tweedie:2009}.  Since $q$ is symmetric, it holds by
definition of the Metropolis kernel that
\[
P_\t(x,A) \geq \int_A q(x,y) \left(1 \wedge \frac{\pi_\t(y)}{\pi_\t(x)}
\right) \lambda(dy) \geq \frac{\inf_{\Xset^2} q}{\sup_\Xset \pi_\t} \int_A
\pi_\t(y) \, \lambda(dy) \eqsp.
\]
Under A\ref{hyp:kernel}, $\inf_\Xset q >0$. Furthermore, since $\t(i)>0$ and
  $\t_\star(i)>0$ for any $i \in \{1, \ldots, d \}$,
\[
\sup_\Xset \pi_\t = \left(\sum_{k=1}^d \frac{\t_\star(k)}{\t(k)} \right)^{-1} \ 
\sup_\Xset \left(\sum_{i=1}^d \frac{\pi}{\t(i)} \un_{\Xset_i}\right) 
\leq \sup_\Xset
\sum_{i=1}^d \frac{\dps \frac{\pi}{\t(i)} \un_{\Xset_i}}{\dps \frac{\t_\star(i)}{\t(i)} }
\leq \frac{\sup_\Xset \pi}{\dps \min_{i \in \{1,\ldots, d \}} \t_\star(i)} \eqsp.
\]
The right-hand side is finite by A\ref{hyp:targetpi} and does not depend upon
$\t$. Therefore, \eqref{eq:minorization_bound} holds with $ \rho \eqdef
\left( \inf_{\Xset²} q \right) (\sup_\Xset \pi)^{-1} \, \min_{1\leq i\leq d}\t_\star(i)$.

\subsection{Proof of Theorem~\ref{recomp}}
\label{sec:proof_recomp}

Define the smallest index of stratum with smallest weight according to
$\theta_n$ {\em i.e.} 
\begin{equation}
  \label{eq:StratumLowestWeight}
  I_n\eqdef \min\{i \, : \, \theta_n(i)=\underline{\theta}_n\} \eqsp,
\end{equation}
where $\ut_n$ is given by (\ref{eq:def:underlinetheta}). We also introduce the
stopping times $T_k$ as the times of return in the stratum of smallest weight:
$T_0=0$ and, for $k\geq 1$, 
\[
T_k=\inf\{n>T_{k-1}\,:\,X_n\in\Xset_{I_n}\} \eqsp,
\]
with the convention that $\inf\emptyset=\infty$.  With these notations,
Theorem~\ref{recomp} is implied by the following proposition, the proof of
which is the goal of this section.
\begin{prop}
  \label{auxrecomp}
  Under A\ref{hyp:targetpi}, A\ref{hyp:kernel} and
  A\ref{hyp:stepsize}\ref{hyp:pasdecroiss}, it holds
  \begin{align}
    \label{eq:recomp_1}
   &  \PP\Big(\forall k\in\N, \ T_k<\infty \Big) = 1 \eqsp, \\
    \label{eq:recomp_2}
 &  \PP\left(\limsup_{k\to\infty} \underline{\theta}_{T_{k}-1} >0\right)=1 \eqsp,
  \end{align}
  where $\ut_n$ is given by (\ref{eq:def:underlinetheta}).
\end{prop}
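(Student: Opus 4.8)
My plan is to prove the two statements together by an inductive/recursive argument on the excursions between successive return times $T_k$, exploiting the uniform minorization bound \eqref{eq:minorization_bound} from Proposition~\ref{prop:unifergo}. The key structural observation is the one highlighted after the theorem: at time $T_k$, the chain enters the stratum $\Xset_{I_{T_k-1}}$ of smallest weight, so by the update rule \eqref{eq:update_weights_linearized} the smallest weight is \emph{increased} (multiplicatively by a factor $1+\gamma_{T_k}(1-\underline\theta_{T_k-1})$), while between return times each weight can only decrease by factors $1-\gamma_n\theta_n(i)\ge 1-\gamma_n$. So the whole point is to show the chain returns to the current minimal stratum often enough — and on a fast enough time scale relative to the decay of $\gamma_n$ — that these upward kicks dominate.

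First I would establish \eqref{eq:recomp_1}, i.e. that each $T_k$ is a.s.\ finite. Conditionally on $\F_n$, by the minorization bound $P_{\theta_n}(X_n,\Xset_j)\ge \rho\,\pi_{\theta_n}(\Xset_j)$, and one checks using A\ref{hyp:targetpi} that $\pi_\theta(\Xset_j)\ge c_0\,\theta(j)^{-1}/(\sum_k\theta_\star(k)/\theta(k))\ge c_0\,\theta_\star(j)\ge c_1>0$ uniformly — actually the relevant bound is on the \emph{current minimal} stratum, for which $\pi_\theta(\Xset_{I_n}) \ge$ something controlled below by $\min_i\theta_\star(i)/\sup\pi\cdot(\dots)$; in any case there is a uniform constant $\eps_0>0$ with $P_{\theta_n}(X_n,\Xset_{I_n})\ge \eps_0$ for all $n$ and all histories. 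Hence at every step there is probability at least $\eps_0$ of hitting the current minimal stratum, so by a Borel--Cantelli / conditional-probability argument (the events are not independent but their conditional probabilities are bounded below, so one uses the extended Borel--Cantelli lemma, e.g.\ \cite[Corollary~5.29]{...} or a direct supermartingale argument) the return happens a.s., giving $T_k<\infty$ for all $k$ with probability one.

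The harder part is \eqref{eq:recomp_2}. Here the plan is: introduce the gap $\Delta_k = T_k - T_{k-1}$ between consecutive returns, and track $\log\underline\theta_{T_k-1}$. Along the $k$-th excursion the minimal weight first gets the upward kick at $T_k$, then decreases by at most $\sum_{n=T_k}^{T_{k+1}-1}\gamma_{n}$ (in log scale, roughly), while the kick contributes roughly $+\gamma_{T_k}(1-\underline\theta_{T_k-1})$. Because $\{\gamma_n\}$ is non-increasing (this is exactly where A\ref{hyp:stepsize}\ref{hyp:pasdecroiss} enters), the increment $\log\underline\theta_{T_{k+1}-1}-\log\underline\theta_{T_k-1}$ is bounded below by roughly $\gamma_{T_k}(1-\underline\theta_{T_k-1}) - C\,\Delta_{k+1}\,\gamma_{T_k}$, so the sign of the drift is governed by whether $1-\underline\theta_{T_k-1}$ beats a multiple of $\Delta_{k+1}$. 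One then needs a tail bound on $\Delta_{k+1}$: since each step hits the minimal stratum with conditional probability $\ge\eps_0$, $\Delta_{k+1}$ is stochastically dominated by a geometric variable, so $\Delta_{k+1}$ has exponential tails uniformly. The strategy is then to argue by contradiction: if $\underline\theta_{T_k-1}$ were to converge to $0$ along a subsequence, then for those $k$ the factor $1-\underline\theta_{T_k-1}$ is close to $1$ and the upward kick $\gamma_{T_k}\cdot(\text{const})$ dominates the expected downward drift $\eps_0^{-1}\gamma_{T_k}$ provided the excursion lengths are not anomalously long — and the exponential tail control on $\Delta_k$, combined with a Borel--Cantelli argument, rules out persistently long excursions. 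Making this rigorous, I expect the cleanest route is to build an explicit submartingale (or positive supermartingale for the reciprocal $1/\underline\theta_{T_k-1}$) out of these increments and invoke the martingale convergence / non-negative supermartingale theorem to conclude $\limsup_k \underline\theta_{T_k-1}>0$ a.s.

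The main obstacle, and where I would spend the most care, is the coupling between the excursion length $\Delta_{k+1}$ and the amount of downward drift suffered by $\underline\theta$ during that excursion: one must show that even though a long excursion hurts, long excursions are exponentially rare \emph{uniformly in the current state and weights}, and that the non-increasing nature of $\gamma_n$ prevents the cumulative decay over an excursion from being much larger than $\Delta_{k+1}\gamma_{T_k}$. A secondary subtlety is that the index $I_n$ of the minimal stratum can change \emph{within} an excursion (before the next return), so "returning to the minimal stratum" is a moving target; one handles this by noting that any decrease of $\underline\theta_n$ between returns is still bounded by the same $\sum\gamma_n$ estimate regardless of which index currently achieves the minimum, and by taking $I_n$ to be the smallest such index (as defined in \eqref{eq:StratumLowestWeight}) to make everything well-defined and measurable.
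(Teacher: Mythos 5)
Your overall strategy (returns to the minimal stratum produce multiplicative upward kicks, returns happen often enough, and a supermartingale for $-\log\underline{\theta}$ closes the argument) is the same as the paper's, but two of your key steps have genuine gaps, and they are precisely where the paper's proof does its real work.

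First, for \eqref{eq:recomp_1}: your uniform bound $P_{\theta_n}(X_n,\Xset_{I_n})\geq \eps_0$ is correct (moving \emph{into} the lowest-weight stratum is always accepted with probability one under the Metropolis rule, so \eqref{eq:minorization_bound} gives $\eps_0=\rho\,\underline{\theta}_\star$), but hitting the stratum that is minimal \emph{before} the update is not the event defining $T_{k+1}$. The return time requires $X_n\in\Xset_{I_n}$ with $I_n$ the minimal index \emph{after} the update at time $n$; as the computation leading to \eqref{incthetastar} shows, this post-update condition is exactly what guarantees the clean increase $\underline{\theta}_{T_k}=\underline{\theta}_{T_k-1}(1+\gamma_{T_k}(1-\underline{\theta}_{T_k-1}))$. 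If several strata have weights within a factor $\approx 1+\gamma_{n+1}$ of the minimum, a single visit to the currently minimal stratum \emph{deterministically} overshoots it past its neighbours, so the visit is not a return in the sense of $T_{k+1}$ and the minimum can even decrease slightly (take two tied minimal strata: visiting one lowers the other). Your closing remark about $I_n$ being ``a moving target'' does not address this. The paper's fix is the event $A_m$ in \eqref{defam}: it forces the chain to visit all the near-minimal strata (those indexed by $i_m$ in \eqref{eqdefim}) in \emph{decreasing} order of weight before hitting the smallest one, so that the smallest stratum is still minimal after its own update; Lemma~\ref{lem:minopam} then gives the uniform lower bound $p$ on $\PP(A_m|\mathcal{F}_{T_k+md})$, at the price of a $d$-step construction and a constant $p\sim(c\,\underline{\theta}_\star)^d$ rather than a one-step $\eps_0$.

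Second, and more seriously, for \eqref{eq:recomp_2}: your drift estimate does not close. With the crude bound $\theta_n(I(X_{n+1}))\leq 1$ the log-decrease over one excursion is of order $\Delta_{k+1}\gamma_{T_k}$, while the upward kick is at most $\gamma_{T_k}(1-Y_k)\leq\gamma_{T_k}$; since $\Delta_{k+1}\geq 1$ always and $\PE[\Delta_{k+1}]\sim d/p\gg 1$ under the geometric domination, the expected downward drift \emph{always} exceeds the kick, so the claimed domination ``$\gamma_{T_k}\cdot(\text{const})$ beats $\eps_0^{-1}\gamma_{T_k}$'' is false as stated. The decrease per step is actually $\gamma_{n+1}\theta_n(I(X_{n+1}))$, and the argument only works because, when $Y_k$ is small, the chain visits almost exclusively strata whose weights are themselves small (of order $Y_k^{1/(2(d-1))}$), so each downward factor is far smaller than the kick $\gamma_{T_k}(1-Y_k)$. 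Establishing this is the heart of the paper's Lemma~\ref{lem:surm}: the upper bound \eqref{majotrans} on the probability of jumping into a high-weight stratum, the sets $\Xset(k)$ in \eqref{eq:definition:X(k)}, the events $B_m$ and the quantities $\rho_m$ in \eqref{eq:definitions:rho:Bm}, and Lemma~\ref{lem:majopab} combine to show the excursion stays in low-weight strata with probability close to one, which is what makes $\PE(v(Y_{k+1})|\mathcal{G}_k)\leq v(Y_k)$ for $Y_k\leq\bar y$. Your proposal contains no substitute for this control, and without it the supermartingale property you want to invoke is simply not available. (Your final step --- partitioning according to whether $Y_k$ exceeds $\bar y$ infinitely often and applying the nonnegative supermartingale convergence theorem on the excursions below $\bar y$ --- does match the paper's conclusion of the proof of \eqref{eq:recomp_2}.)
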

When finite, the stopping times $T_k$ are such that $\underline{\theta}_{T_k} -
\ut_{T_k-1}$ admits a known increase. Indeed, by the update
rule~\eqref{eq:update_weights_linearized},
\begin{align*}
  \theta_{T_k-1}(I_{T_k})&=\frac{\theta_{T_k}(I_{T_k})}{1+\gamma_{T_k}(1-\theta_{T_k-1}(I_{T_k}))}<\theta_{T_k}(I_{T_k})\\&\leq
  \min_{j\neq I_{T_k}}\theta_{T_k}(j)<\min_{j\neq
    I_{T_k}}\frac{\theta_{T_k}(j)}{1-\g\theta_{T_k-1}(I_{T_k})}=\min_{j\neq
    I_{T_k}}\theta_{T_k-1}(j) \eqsp,
\end{align*}
so that
\begin{align}
  \label{incthetastar}
  I_{T_k-1}=I_{T_k}\mbox{ and
  }\underline{\theta}_{T_k}=\underline{\theta}_{T_k-1}(1+\gamma_{T_k}(1-\underline{\theta}_{T_k-1}))
  \eqsp.
\end{align} In the evolution from $\underline{\theta}_{T_k-1}$ to $\underline{\theta}_{T_{k+1}-1}$, the increase provided by the return to the stratum $\Xset_{I_{T_k}}$ at time $T_k$ compensates the decrease of $\underline{\theta}_n$ generated by the subsequent visits to the other strata for $n\in\{T_k+1,\ldots,T_{k+1}-1\}$, provided that $T_{k+1}-T_{k}$ is small enough. This is indeed possible since the decrease arises from multiplicative factors $1-\gamma_n \theta(I(X_n))$, where $\gamma_n \theta(I(X_n))$ is typically much smaller than the term $\gamma_{T_k}(1-\underline{\theta}_{T_k-1})$ appearing in~\eqref{incthetastar}.

\subsubsection{Proof of~\eqref{eq:recomp_1}}
To prove the first assertion, we proceed by induction on $k$ and suppose that $\PP(T_k<\infty)=1$. This assertion is true for $k=0$. To check the condition $\PP(T_{k+1}<\infty)=1$, we are going to construct a specific sequence ensuring that $X_n$ returns in the stratum of smallest weight at some point (see~\eqref{defam} below), and show that this sequence has a positive probability of occurrence (see Lemma~\ref{lem:minopam} below).

For $m\in\N$, let $\theta_{T_k+md}((1)_m) \leq \theta_{T_k+md}((2)_m)\leq
\hdots \leq \theta_{T_k+md}((d)_m)$ denote the increasing reordering of
$(\theta_{T_k+md}(i))_{1\leq i\leq d}$ (notice that
$\theta_{T_k+md}((1)_m)=\underline{\theta}_{T_k+md}$), and define
\begin{equation}
   i_m=\max\{i\leq d:\theta_{T_k+md}((i)_m)<
\underline{\theta}_{T_k+md}(1+\gamma_1)/(1-\gamma_1)\}\label{eqdefim}.
\end{equation} The indices
$(1)_m,\dots,(i_m)_m$ are all the indices of the strata with weights close
enough to the minimal weight. We then consider the sequence obtained by
visiting successively the strata with indices $(i)_m$ for $i \leq i_m$, in
decreasing order. This corresponds to the event
\begin{equation}
  \label{defam}
   A_m = \Big\{ X_{T_k+md+1}\in\Xset_{(i_m)_m},X_{T_k+md+2}\in\Xset_{(i_m-1)_m},
   \hdots,X_{T_k+md+i_m}\in\Xset_{(1)_m} \Big\} \eqsp.
\end{equation}
On $A_m$, the weights are not updated for $j\geq i_m+1$, so that
\[
\frac{\theta_{T_k+md+i_m-1}((j)_m)}{\theta_{T_k+md+i_m-1}((1)_m)}=\frac{\theta_{T_k+md}((j)_m)}{\theta_{T_k+md}((1)_m)}\geq\frac{1+\gamma_1}{1-\gamma_1}>\frac{1+\gamma_{T_k+md+i_m}(1-\theta_{T_k+md+i_m-1}((1)_m))}{1-\gamma_{T_k+md+i_m}\theta_{T_k+md+i_m-1}((1)_m)}
\eqsp,
\]
where we have used successively the definition of~$i_m$
and~A\ref{hyp:stepsize}\ref{hyp:pasdecroiss}.  The inequality between the left-most and right-most terms rewrites
$\theta_{T_k+md+i_m}((j)_m)>\theta_{T_k+md+i_m}((1)_m)$. Now, for
$j\in\{2,\hdots,i_m\}$, it holds on $A_m$
\begin{align*}
  \frac{\theta_{T_k+md+i_m}((j)_m)}{\theta_{T_k+md+i_m}((1)_m)}
  &=\frac{\theta_{T_k+md}((j)_m)}{\theta_{T_k+md}((1)_m)}\times\frac{\dps
    1+\frac{\gamma_{T_k+md+i_m+1-j}}{1-\gamma_{T_k+md+i_m+1-j}\theta_{T_k+md+i_m-j}((j)_m)}}{\dps
    1+\frac{\gamma_{T_k+md+i_m}}{1-\gamma_{T_k+md+i_m}\theta_{T_k+md+i_m-1}((1)_m)}}
  \eqsp.
\end{align*}
The second factor on the right-hand side is larger than~1 on $A_m$ since 
$\gamma_{T_k+md+i_m}\leq \gamma_{T_k+md+i_m+1-j}$ by A\ref{hyp:stepsize}\ref{hyp:pasdecroiss}
and, using the fact that
the stratum $\Xset_{(1)_m}$ is not visited until the last step,
\[
\begin{aligned}
  \theta_{T_k+md+i_m-1}((1)_m) & < \theta_{T_k+md+i_m-j}((1)_m)=\theta_{T_k+md+i_m-j}((j)_m)\times \frac{\theta_{T_k+md}((1)_m)}{\theta_{T_k+md}((j)_m)} \\
  & \leq \theta_{T_k+md+i_m-j}((j)_m) \eqsp.
\end{aligned}
\]
Therefore, the stratum with smallest weight at iteration $T_k+md+i_m$ is still $(1)_m$, which means that $I_{T_k+md+i_m}=(1)_m$ on $A_m$ and
\begin{equation}\label{majodeltatk}
  \mathrm{on} \ A_m, \qquad T_{k+1} \leq T_k+md+i_m \leq T_k+(m+1)d \eqsp.
\end{equation}
To deduce that $\PP(T_{k+1}<\infty)=1$, we use the following lemma (whose proof is postponed to Section~\ref{sec:stability_lemma_proofs}). 

\begin{lemma}
  \label{lem:minopam}
  Under A\ref{hyp:targetpi} and A\ref{hyp:kernel}, there exists a constant $p
  \in (0,1]$ not depending on $k$ such that almost-surely
   \begin{equation*}
     \forall m\in\N, \qquad \PP(A_m|{\mathcal F}_{T_k+md}) \geq p \eqsp.
   \end{equation*}
\end{lemma}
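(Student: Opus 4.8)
The plan is to bound $\PP(A_m\mid\F_{T_k+md})$ from below by peeling off one Metropolis step at a time and invoking, at each step, the minorization~\eqref{eq:minorization_bound} of Proposition~\ref{prop:unifergo}. Write $N\eqdef T_k+md$ and work on $\{T_k<\infty\}$ (which carries full probability by the induction hypothesis). Since $X_N$, $\t_N$, the increasing reordering $(\cdot)_m$ of the weights and the index $i_m$ are all $\F_N$-measurable, I will argue on each $\F_N$-event $\{i_m=j\}$, $j\in\{1,\dots,d\}$, treating $j$ and the reordering as fixed there. On $\{i_m=j\}$, let $\sigma(\ell)\eqdef(j-\ell)_m$ for $\ell=0,\dots,j-1$ be the strata prescribed by $A_m$, and let $\t^{(0)}=\t_N,\t^{(1)},\dots,\t^{(j)}$ be the weight vectors obtained from $\t^{(0)}$ by applying~\eqref{eq:update_weights_linearized} for successive visits to $\Xset_{\sigma(0)},\dots,\Xset_{\sigma(j-1)}$; on $A_m$ one has $\t_{N+\ell}=\t^{(\ell)}$. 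Conditioning from the last step backwards and using $P_\t(x,\Xset_i)\geq\rho\,\pi_\t(\Xset_i)$ at each of the $j$ steps, I expect to obtain, on $\{i_m=j\}$,
\[
\PP(A_m\mid\F_N)\;\geq\;\rho^{\,j}\,\prod_{\ell=0}^{j-1}\pi_{\t^{(\ell)}}\big(\Xset_{\sigma(\ell)}\big)\eqsp.
\]

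The core of the proof is then a lower bound on each factor $\pi_{\t^{(\ell)}}\big(\Xset_{\sigma(\ell)}\big)$ uniform in $\ell$, $m$ and $k$. Note first that no such bound is available over arbitrary $\t\in\Theta$, since by~\eqref{eq:def:pitheta} $\pi_\t(\Xset_i)$ can be arbitrarily small; one has however the elementary estimate $\pi_\t(\Xset_i)=\big(\t_\star(i)/\t(i)\big)\big(\sum_r\t_\star(r)/\t(r)\big)^{-1}\geq\t_\star(i)\,\big(\min_r\t(r)\big)/\t(i)$, using $\sum_r\t_\star(r)/\t(r)\leq(\min_r\t(r))^{-1}$ and $\sum_r\t_\star(r)=1$. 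Applied with $\t=\t^{(\ell)}$ and $i=\sigma(\ell)$, this requires an upper bound on $\t^{(\ell)}(\sigma(\ell))$ and a lower bound on $\min_r\t^{(\ell)}(r)$, both relative to $\underline{\theta}_N$. For the first, since $\Xset_{\sigma(\ell)}$ is not visited during the first $\ell$ steps of $A_m$, the $\sigma(\ell)$-th weight has only been multiplied by factors $\leq 1$, whence $\t^{(\ell)}(\sigma(\ell))\leq\t_N((j-\ell)_m)\leq\t_N((j)_m)<\underline{\theta}_N(1+\gamma_1)/(1-\gamma_1)$ --- the middle inequality because $(\cdot)_m$ is increasing, and the last one being exactly the definition~\eqref{eqdefim} of $i_m=j$. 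For the second, a short induction on $\ell$ based on~\eqref{eq:update_weights_linearized} and A\ref{hyp:stepsize}\ref{hyp:pasdecroiss} (at each step the visited coordinate increases, while every other coordinate is multiplied by a factor in $[1-\gamma_1,1)$) gives $\min_r\t^{(\ell)}(r)\geq(1-\gamma_1)^{\ell}\,\underline{\theta}_N$. Since $\ell\leq i_m-1\leq d-1$ and $\t_\star(\sigma(\ell))\geq\min_r\t_\star(r)>0$ by A\ref{hyp:targetpi}, these two bounds combine into
\[
\pi_{\t^{(\ell)}}\big(\Xset_{\sigma(\ell)}\big)\;\geq\;\Big(\min_{1\leq r\leq d}\t_\star(r)\Big)\,\frac{(1-\gamma_1)^{d}}{1+\gamma_1}\;\eqdef\;c_0\;>\;0\eqsp.
\]

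Collecting the estimates, and using $1\leq i_m\leq d$, one gets $\PP(A_m\mid\F_N)\geq(\rho\,c_0)^{i_m}\geq(\rho\,c_0)^{d}$ on each $\{i_m=j\}$, hence almost surely; taking $p\eqdef(\rho\,c_0)^{d}$, which lies in $(0,1]$ and depends only on $\pi$, $q$, $\t_\star$ and $\gamma_1$ (not on $k$ or $m$), proves the lemma. The only genuinely delicate point is the uniform control of the factors $\pi_{\t^{(\ell)}}\big(\Xset_{\sigma(\ell)}\big)$: one must crucially use that $A_m$ visits precisely the strata whose weight is within a factor $(1+\gamma_1)/(1-\gamma_1)$ of the minimal weight, and that along these at most $d$ updates the minimal weight can be reduced by at most a factor $(1-\gamma_1)^{d-1}$; everything else is bookkeeping.
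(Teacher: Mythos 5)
Your proof is correct and follows essentially the same route as the paper's: a backward peeling of the $i_m$ Metropolis steps prescribed by $A_m$, with each one-step transition probability into the target stratum bounded below by a uniform constant, the definition~\eqref{eqdefim} of $i_m$ controlling the first step and the monotone evolution of the not-yet-visited weights controlling the subsequent ones. The only difference is in the one-step estimate: the paper minorizes $P_\theta(x,\Xset_i)$ by $c\,\theta_\star(i)\left(\theta(I(x))/\theta(i)\wedge 1\right)$ and checks that this ratio is $\geq 1$ at every intermediate step, whereas you go through $P_\theta(x,\cdot)\geq\rho\,\pi_\theta$ together with a worst-case bound on $\pi_\theta(\Xset_i)$, which costs an extra factor $\min_j\theta_\star(j)$ per step and hence yields a slightly smaller, but equally valid, constant $p$.
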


Lemma~\ref{lem:minopam} implies that, for $m\in\N$,
\begin{align*}
  \PP\big(T_{k+1}>T_k+(m+1)d\big)& \leq \PP(A_0^c\cap A_1^c\cap \hdots \cap A_m^c) \\
  & =\PE\left(\un_{\{A_0^c\cap A_1^c\cap \hdots \cap A_{m-1}^c\}}\big(1-\PP(A_m|{\mathcal F}_{T_k+md})\big)\right)\\
  &\leq (1-p)\PP(A_0^c\cap A_1^c\cap \hdots \cap A_{m-1}^c) \eqsp,
\end{align*}
which inductively leads to $\PP(T_{k+1}>T_k+(m+1)d)\leq (1-p)^{m+1}$. The conclusion follows by taking the limit $m\to\infty$ in the latter inequality.

\subsubsection{Proof of~\eqref{eq:recomp_2}}
The proof of the second assertion relies on the following lemma (proved in
Section~\ref{sec:stability_lemma_proofs}).  For $k \geq 1$, set
\begin{equation}
  \label{eq:definitions:Gm}
{\mathcal G}_k \eqdef {\mathcal F}_{T_k} \eqsp, \qquad Y_k \eqdef
\underline{\theta}_{T_{k}-1} \eqsp,
\end{equation}
where $\ut_\ell$ is defined by (\ref{eq:def:underlinetheta}).

\bigskip

\begin{lemma}
\label{lem:surm}
Let $v:(0,1]\ni t\mapsto -\ln(t)\in{\mathbb R}_+$. Assume that
A\ref{hyp:targetpi}, A\ref{hyp:kernel} and A\ref{hyp:stepsize}\ref{hyp:pasdecroiss} hold. Then, there exist $\underline{k}\in\N$ and
$\bar{y}\in (0,1)$ such that almost-surely,
\[
\forall k\geq\underline{k}, \qquad Y_k\leq\bar{y} \Rightarrow
\PE(v(Y_{k+1})|{\mathcal G}_k)\leq v(Y_k) \eqsp.
\]
\end{lemma}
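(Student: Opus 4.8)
The plan is to follow one excursion of the chain between two consecutive return times $T_k$ and $T_{k+1}$, and to show that the single multiplicative increase of the smallest weight at time $T_k$ dominates, in $\mathcal{G}_k$-conditional expectation, the multiplicative decreases suffered at the subsequent steps, as soon as $Y_k$ is small and $k$ is large. Throughout, we use that a.s. $T_k<\infty$ for all $k$ (see~\eqref{eq:recomp_1}); since the $T_k$ are strictly increasing integers, $T_k\geq k$, so by A\ref{hyp:stepsize}\ref{hyp:pasdecroiss} one has $\gamma_n\leq\gamma_{T_k}\leq\gamma_k$ for all $n>T_k$, which is what makes a \emph{deterministic} index $\underline{k}$ possible.

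\textbf{Gain at the return time.} By~\eqref{incthetastar}, $\underline{\theta}_{T_k}=Y_k\bigl(1+\gamma_{T_k}(1-Y_k)\bigr)$, so, using $v=-\ln$, $\ln(1+x)\geq x/(1+x)$ and $\gamma_{T_k}(1-Y_k)\leq 1$,
\[
v(\underline{\theta}_{T_k})=v(Y_k)-\ln\bigl(1+\gamma_{T_k}(1-Y_k)\bigr)\leq v(Y_k)-\tfrac{1}{2}\gamma_{T_k}(1-Y_k)\eqsp.
\]
On $\{Y_k\leq\bar{y}\}$ this is at most $v(Y_k)-c_1\gamma_{T_k}$ with $c_1\eqdef(1-\bar{y})/2$.

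\textbf{Decreases in between.} For every $n\geq 1$, writing $i=I(X_n)$, the update~\eqref{eq:update_weights_linearized} multiplies each $\theta_{n-1}(j)$ with $j\neq i$ by $1-\gamma_n\theta_{n-1}(i)\in(0,1)$ and increases $\theta_{n-1}(i)$, hence $\underline{\theta}_n\geq\bigl(1-\gamma_n\theta_{n-1}(I(X_n))\bigr)\underline{\theta}_{n-1}$. Iterating this over $n\in\{T_k+1,\dots,T_{k+1}-1\}$, using $Y_{k+1}=\underline{\theta}_{T_{k+1}-1}$ together with the previous display, and then $-\ln(1-x)\leq 2x$ for $x\leq1/2$ (valid here as soon as $\gamma_{\underline{k}}\leq1/2$), we obtain, on $\{Y_k\leq\bar{y}\}$ with $k\geq\underline{k}$,
\[
v(Y_{k+1})\leq v(Y_k)-c_1\gamma_{T_k}+2\gamma_{T_k}\sum_{n=T_k+1}^{T_{k+1}-1}\theta_{n-1}(I(X_n))\eqsp.
\]
It therefore suffices to prove that, for $\bar{y}$ small enough and $k\geq\underline{k}$,
\[
\PE\Bigl[\sum_{n=T_k+1}^{T_{k+1}-1}\theta_{n-1}(I(X_n))\ \Big|\ \mathcal{G}_k\Bigr]\leq\tfrac{c_1}{4}\qquad\text{on }\{Y_k\leq\bar{y}\}\eqsp,
\]
since then $\PE[v(Y_{k+1})\mid\mathcal{G}_k]\leq v(Y_k)-\tfrac{c_1}{2}\gamma_{T_k}\leq v(Y_k)$ ($v(Y_k)$ and $\gamma_{T_k}$ being $\mathcal{G}_k$-measurable).

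\textbf{The key estimate and main obstacle.} To prove the last bound I would split each term according to whether the visited stratum is \emph{light}, $\theta_{n-1}(I(X_n))\leq\eta$ (contributing at most $\eta$), or \emph{heavy}, $\theta_{n-1}(I(X_n))>\eta$. For the heavy contributions I would use, on the one hand, that by Lemma~\ref{lem:minopam} the excursion length $T_{k+1}-T_k$ has, conditionally on $\mathcal{G}_k$, geometric tails whose parameters do not depend on $k$, and on the other hand, that when $Y_k\leq\bar{y}$ is small the minorization~\eqref{eq:minorization_bound} together with the explicit form~\eqref{eq:def:pitheta} of $\pi_{\theta_n}$ (which assigns mass proportional to $\theta_\star(i)/\theta_n(i)$ to $\Xset_i$, hence a large mass to the lightest strata) forces the chain, from a heavy stratum, to jump at the next step into the minimal stratum — thus ending the excursion — with a probability bounded below uniformly in $n$ and in the configuration; heavy visits are thus rare. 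Letting $\eta$ and then $\bar{y}$ tend to $0$ would close the argument. I expect this last estimate to be the crux of the proof: there is a single $\mathrm{O}(\gamma_{T_k})$ gain at the return time to balance against an accumulated loss that is a priori also of order $\gamma_{T_k}$ over a random-length excursion, so everything hinges on showing that when $Y_k\leq\bar{y}$ with $\bar{y}$ small, the penalised dynamics is pushed so strongly toward the minimal strata that the excursion is both short and spent where $\theta_n(I(X_n))$ is small. This is exactly where Proposition~\ref{prop:unifergo} and Lemma~\ref{lem:minopam} enter, and where the constants $\underline{k}$, $\bar{y}$ (and the threshold $\eta$) must be chosen carefully.
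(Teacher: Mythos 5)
Your reduction is sound and coincides with the first half of the paper's argument: the inequality $\underline{\theta}_n\geq\bigl(1-\gamma_n\theta_{n-1}(I(X_n))\bigr)\underline{\theta}_{n-1}$, the gain $\ln\bigl(1+\gamma_{T_k}(1-Y_k)\bigr)\geq\tfrac12\gamma_{T_k}(1-Y_k)$ at the return time, and the bound $-\ln(1-x)\leq 2x$ give exactly the paper's inequality~\eqref{minoy}, and the problem is indeed reduced to showing that $\PE\bigl[\sum_{n=T_k+1}^{T_{k+1}-1}\theta_{n-1}(I(X_n))\mid\mathcal{G}_k\bigr]$ is smaller than a fixed small constant when $Y_k\leq\bar y$. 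The genuine gap is in the mechanism you propose for the heavy strata, and it cannot be repaired as stated. The minorization~\eqref{eq:minorization_bound} gives $P_{\theta_n}(x,\Xset_{I_n})\geq\rho\,\pi_{\theta_n}(\Xset_{I_n})\geq\rho\,\theta_\star(I_n)\geq\rho\min_i\t_\star(i)=:q$, but this lower bound does \emph{not} improve as $Y_k\to0$: several strata may carry comparably small weights, so $\pi_{\theta_n}(\Xset_{I_n})$ need not be close to $1$, and $\rho<1$ in any case. Your argument therefore only shows that the number $N_H$ of heavy visits in one excursion satisfies $\PE[N_H\mid\mathcal{G}_k]\leq 1/q$, a \emph{fixed} constant (typically much larger than $1$) which does not tend to $0$ as $\eta$ or $\bar y$ tend to $0$. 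Since each heavy visit contributes up to $1$ to the sum, the heavy contribution to the loss is of order $2\gamma_{T_k}/q$, which overwhelms the gain $c_1\gamma_{T_k}$; the supermartingale inequality does not follow. (There is also a secondary inaccuracy: landing in $\Xset_{I_n}$ does not necessarily mean $T_{k+1}=n+1$, because after the update that stratum may no longer be the minimal one.)

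What is actually needed — and what the paper does — is a control of the \emph{entry} into heavy strata rather than of the exit from them: by~\eqref{majotrans}, $P_\theta(x,\Xset_j)\leq C\bigl(\theta(i)/\theta(j)\wedge1\bigr)$ for $x\in\Xset_i$, so from a light stratum the chain reaches a heavy one only with probability comparable to the ratio of their weights. Since the excursion starts in the minimal stratum, the probability of \emph{ever} visiting a heavy stratum during the excursion is then $\mathrm{O}\bigl(Y_k^{1/(2(d-1))}\bigr)\to0$, which is what makes the heavy contribution vanish as $\bar y\to0$. Note also that a fixed threshold $\eta$ is incompatible with this mechanism: two strata with weights just below and just above $\eta$ have ratio close to $1$, so~\eqref{majotrans} gives no smallness. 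One must instead locate a genuine multiplicative gap in the ordered weights — since $\prod_{i=1}^{d-1}\theta_{T_k}((i+1)_0)/\theta_{T_k}((i)_0)\geq 1/(2dY_k)$, some consecutive ratio is at least $(2dY_k)^{-1/(d-1)}$ — and define ``light'' and ``heavy'' by cutting at that gap, which is the role of $i_{T_k}$ and $\Xset(k)$ in the paper's proof. Your write-up correctly identifies this estimate as the crux, but the ingredient you invoke for it points in the wrong direction and leaves the main difficulty unresolved.
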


We then define by induction stopping times $\sigma_m$ and $\tau_m$ as follows:
$\sigma_0=0$, and for $m\geq 1$ (with the convention $\inf\emptyset=\infty$),
\[
\tau_m=\inf\{k>\sigma_{m-1} \, : \, Y_k\leq \bar{y}\}, \qquad
\sigma_m=\inf\{k>\tau_m \, : \, Y_k>\bar{y}\} \eqsp.
\]
All possible events can then be classified using the following partition of the underlying probability space:
\[
\left\{\exists m\geq
  0:\sigma_m<\infty=\tau_{m+1}\right\}\cup\left\{\forall m\geq
  1,\sigma_m<\infty\right\}\cup\left\{\exists m\geq
  1:\tau_m<\infty=\sigma_m\right\} \eqsp.
\]
On the first two sets, $Y_k>\bar{y}$ infinitely often so that
$\limsup_{k\to\infty} Y_k\geq\bar{y}$.  To deal with the last set, one remarks that for each $m\geq 1$, the process $(v(Y_{k\wedge
  \sigma_m})-v(Y_{k\wedge \tau_m}))_{k\geq
  \underline{k}}$ is a ${\mathcal G}_k$-supermartingale by Lemma \ref{lem:surm} and is not smaller than $-v(Y_{\tau_m})\geq -v(\bar{y}(1-\gamma_1))$ by positivity and monotonicity of $v$ and definition of $\tau_m$. So this process converges almost surely to a finite limit $V_m$ as $k\to\infty$. As a consequence, on $\{\exists m\geq
  1:\tau_m<\infty=\sigma_m\}$, $(Y_k)_k$ converges a.s. to
  $\sum_{m\geq 1} \un_{\{\tau_m<\infty=\sigma_m\}}Y_{\tau_m}e^{-V_m}$. In conclusion, $\PP(\limsup_{k\to\infty}
Y_k>0)=1$.

\subsubsection{Proofs of some technical results}
\label{sec:stability_lemma_proofs}
We now provide the proofs of the previously quoted lemmas.

\begin{proof}[Proof of Lemma \ref{lem:minopam}]
By A\ref{hyp:targetpi} and A\ref{hyp:kernel}, the constant $c\stackrel{\rm def}{=}\frac{\inf_{\Xset^2}q}{\sup_{\Xset}\pi}$ is positive. The main ingredient of the proof is the following lower-bound: for all $i\in\{1,\hdots,d\}$ and $x\in\Xset$,
\begin{align}
  P_\theta(x,\Xset_{i})\geq \int_{\Xset_{i}}q(x,y)\left(1\wedge \frac{\theta(I(x))\pi(y)}{\theta(i)\pi(x)}\right)\lambda(dy)\geq c \, \theta_\star(i)\left(\frac{\theta(I(x))}{\theta(i)}\wedge 1\right)\label{minotrans}.
\end{align}
For $j\in\{1,\hdots,i_m-1\}$, it holds  on $\{X_{T_k+md+1}\in\Xset_{(i_m)_m},\hdots,X_{T_k+md+j}\in\Xset_{(i_m+1-j)_m}\}$,
\[
\begin{aligned}
  & \frac{\theta_{T_k+md+j}((i_m+1-j)_m)}{\theta_{T_k+md+j}((i_m-j)_m)} \\
  &
  =\frac{\theta_{T_k+md}((i_m+1-j)_m)}{\theta_{T_k+md}((i_m-j)_m)}\times\frac{1+\gamma_{T_k+md+j}(1-\theta_{T_k+md+j-1}((i_m+1-j)_m)}{1-\gamma_{T_k+md+j}\theta_{T_k+md+j-1}((i_m+1-j)_m)}
  \eqsp.
\end{aligned}
\]
Both factors on the right-hand side are larger than~1 (the first one by definition of the ordered indices $(i)_m$), so that, by \eqref{minotrans},
\[
P_{\theta_{T_k+md+j}}\left(X_{T_k+md+j},\Xset_{(i_m-j)_m}\right) \geq
c \, \theta_\star((i_m-j)_m) \geq c \, \ut_\star \eqsp,\] where $\ut$ is defined by
(\ref{eq:def:underlinetheta}).  Using successively the strong Markov property
of the chain $(X_n,\theta_n)_n$, a backward induction on $n$, the definition of
$i_m$, together with~\eqref{minotrans},
\begin{align*}
  &\PP(A_m|{\mathcal F}_{T_k+md})\\
  &=\PE\left(\un_{\{X_{T_k+md+1}\in\Xset_{(i_m)_m},\hdots,X_{T_k+md+i_m-1}\in\Xset_{(2)_m}\}}P_{\theta_{T_k+md+i_m-1}}(X_{T_k+md+i_m-1},\Xset_{(1)_m})|{\mathcal F}_{T_k+md}\right)\\
  &\geq c \, \ut_\star\PE\left(\left. \un_{\{X_{T_k+md+1}\in\Xset_{(i_m)_m},\hdots,X_{T_k+md+i_m-2}\in\Xset_{(3)_m}\}}P_{\theta_{T_k+md+i_m-2}}(X_{T_k+md+i_m-2},\Xset_{(2)_m})\right|{\mathcal F}_{T_k+md}\right)\\
  &\geq \left(c \, \ut_\star\right)^{i_m-1}P_{\theta_{T_k +md}}(X_{T_k+md},\Xset_{(i_m)_m})\\
  &\geq \left(c \, \ut_\star\right)^{i_m-1}c\frac{1-\gamma_1}{1+\gamma_1} \,
  \ut_\star\geq\frac{1-\gamma_1}{1+\gamma_1}\left(c \, \ut_\star\right)^{d}
  \eqsp.
\end{align*}
It then suffices to define 
\begin{equation}
   \label{defp}\dps p = \frac{1-\gamma_1}{1+\gamma_1}\left(c \, \ut_\star\right)^{d}
\end{equation}
in order to conclude the proof.
\end{proof}

\begin{proof}[Proof of Lemma~\ref{lem:surm}]
Note first that, apart from the case when $X_n \in \Xset_{I_n}$, the 
other situation ensuring that $\underline{\theta}_{n+1}>\underline{\theta}_n$ is the case when
the chain visits the stratum of smallest weight, but the weight of this stratum is then increased while the weights of the other ones are decreased, in such a manner that this stratum no longer remains the one with smallest weight. In mathematical terms, $X_{n+1}\in\Xset_{I_n}$ and
\[
\frac{\underline{\theta}_n}{1-\gamma_{n+1}\underline{\theta}_n}<\min_{j\neq I_n}\theta_n(j)\leq \frac{\underline{\theta}_n(1+\gamma_{n+1}(1-\underline{\theta}_n))}{1-\gamma_{n+1}\underline{\theta}_n},
\]
where the first inequality actually implies that $\underline{\theta}_{n+1}>\underline{\theta}_n$ and the second one that $X_{n+1}\notin\Xset_{I_{n+1}}$.

Define $\mu(\omega)=\inf\{m\geq 1:\omega\in A_{m-1}\}$. Then, recalling that $Y_k\stackrel{\rm def}{=}\underline{\theta}_{T_{k}-1}$,
\begin{align}
  \label{minoy}
   Y_{k+1}&\geq Y_k(1+\g(1-Y_k))\prod_{n=T_k}^{T_{k+1}-2}(1-\gamma_{n+1}\theta_{n}(I(X_{n+1})))\notag\\
&\geq Y_k(1+\g(1-Y_k))\prod_{n=T_k}^{T_{k}+\mu d-2}(1-\g\theta_{n}(I(X_{n+1}))),
\end{align}
where the first factor comes from the definition of $T_k$ (see \eqref{incthetastar}); the first inequality from the possibility that for some $n\in\{T_k,\hdots,T_{k+1}-2\}$,
$X_{n+1}\in\Xset_{I_n}$ and $I_{n+1}\neq I_n$; and the second inequality from
A\ref{hyp:stepsize}\ref{hyp:pasdecroiss} and the fact that $T_{k+1}\leq T_k+\mu d$
by~\eqref{majodeltatk}. By Lemma \ref{lem:minopam} $\mu$ is smaller than some geometric random variable with
parameter bounded from below by the positive constant $p$. Therefore, the number of
terms smaller than~1 in the product on the right-hand side of~\eqref{minoy} is
small. If $Y_k$ is chosen small enough, then $\g(1-Y_k)$ is much larger than
$\g Y_k$ and if $\theta_{n}(I(X_{n+1}))$ remains of the same order as
$\theta_{T_k-1}(I(X_{T_k}))=Y_k$ for $n\in\{T_k,\hdots,T_{k}+\mu d-2\}$, then,
in average, $Y_{k+1}$ will be larger than $Y_k$. Unfortunately, since for
$\t\in\Theta$, $\max_{1\leq i\leq d}\theta(i)\geq\frac{1}{d}$,
$\theta_{T_k}(i)$ is large for $i$ in some subset of $\{1,\hdots,d\}$ and we
need to control the probability for $(X_n)_{T_k+1\leq n\leq T_{k}+\mu d-1}$ to
visit the corresponding strata. Under A\ref{hyp:targetpi}, $C\stackrel{\rm
  def}{=}\frac{\sup_{\Xset}\pi}{\inf_{\Xset}\pi}<\infty$ and, for all $i\neq
j\in\{1,\hdots,d\}$ and $x\in\Xset_i$,
\begin{align}
  P_\theta(x,\Xset_{j})=\int_{\Xset_{j}}q(x,y)\left(1\wedge
    \frac{\theta(i)\pi(y)}{\theta(j)\pi(x)}\right)\lambda(dy)\leq
  C\left(\frac{\theta(i)}{\theta(j)}\wedge 1\right) \eqsp.\label{majotrans}\end{align} This ensures
that the conditional probability to choose $X_{n+1}\in\Xset_j$ with large
weight $\theta_n(j)$, given $X_n\in\Xset_i$ with low weight $\theta_n(i)$, is
small.

To quantify this intuition, define $\dps i_{T_k}\in{\rm argmax}_{1\leq i\leq d-1}\frac{\theta_{T_k}((i+1)_0)}{\theta_{T_k}((i)_0)}$. Since 
\begin{equation}\label{eqminpro}
   \prod_{i=1}^{d-1}\frac{\theta_{T_k}((i+1)_0)}{\theta_{T_k}((i)_0)}=\frac{\max_{i}\theta_{T_k}(i)}{Y_k(1+\gamma_{T_k}(1-Y_k))}\geq
\frac{1}{2dY_k} \eqsp,
\end{equation}
it holds
\[
\frac{\theta_{T_k}((i_{T_k}+1)_0)}{\theta_{T_k}((i_{T_k})_0)} \geq (2dY_k)^{-1/(d-1)}\eqsp,
\]
so that, for all $i\in\{1,\hdots, i_{T_k}\}$,
\[\theta_{T_k}((i)_0) \leq \theta_{T_k}((i_{T_k})_0) = \frac{\theta_{T_k}((i_{T_k})_0)}{\theta_{T_k}((i_{T_k}+1)_0)}\times\theta_{T_k}((i_{T_k}+1)_0)\leq (2dY_k)^{1/(d-1)} \eqsp.
\]
Hereafter, the set
\begin{equation}
  \label{eq:definition:X(k)}
  \Xset(k)\stackrel{\rm def}{=}\cup_{i\geq
  i_{T_k}+1}\Xset_{(i)_0} \eqsp,
\end{equation}
plays the role of the union of strata with large weight according to
$\theta_{T_k}$.  Define, for $m\in\N$, 
\begin{equation}
  \label{eq:definitions:rho:Bm}
  \rho_m \eqdef 1\wedge \left((2dY_k)^{1/(d-1)}\left(\frac{1+\g}{1-\g}\right)^{(m+1)d}\right),
\qquad B_m \eqdef \bigcup_{n\in\{1,\hdots,d\}}\Big\{X_{T_k+md+n}\in\Xset(k)\Big\} \eqsp.
\end{equation}
Then,
\begin{equation}
  \label{gapetmaj}
  \forall m\in\N, \ \forall n\leq (m+1)d, \qquad \frac{\max_{j\leq i_{T_k}}\theta_{T_k+n}((j)_0)}{\min_{i\geq i_{T_k}+1}\theta_{T_k+n}((i)_0)}\wedge 1\leq \rho_m  \eqsp,
\end{equation}
which implies
\begin{equation}
\forall j\in\{1,\hdots, i_{T_k}\}, n \leq (m+1)d, \qquad \theta_{T_k+n}((j)_0)\leq \rho_m \eqsp.
\label{gapetmaj2}
\end{equation}
Using \eqref{minoy}, the definition of $\mu$, then the inequality $-\ln(x)\leq \frac{1}{x}-1$, it follows
\begin{align}
  &\PE(v(Y_{k+1})|{\mathcal G}_k)-v(Y_k)+\ln(1+\g(1-Y_k))\notag\\
  &\leq -\sum_{m\in\N}\PE\left(\ln\left(\prod_{n=T_k}
      ^{T_k+(m+1)d-1}(1-\g \theta_n(I(X_{n+1})))\right)\un_{\{A_0^c\cap \hdots\cap A_{m-1}^c\cap A_m\}}|{\mathcal G}_k\right)\notag\\
  &\leq -\sum_{m\in\N}\PE\left(\ln\left(\prod_{n=T_k}
      ^{T_k+(m+1)d-1}(1-\g \theta_n(I(X_{n+1})))\right) \un_{\{A_0^c\cap \hdots\cap A_{m-1}^c\}}|{\mathcal G}_k\right)\notag\\
  &\leq \sum_{m\in\N}\sum_{l=0}^{m}E_{ml} \eqsp,\label{surm1}
\end{align}
where the numbers $E_{ml}$ are defined by decomposing the possible events using the partition
$B_0,B_0^c \cap B_1, \dots, B_0^c\cap \dots \cap B_{m-2}^c \cap B_{m-1}, 
B_0^c\cap \dots \cap B_{m-1}^c$:
\begin{equation*}
  E_{ml}= 
  \left\{ \begin{aligned}
    &\PE\bigg(\bigg((1-\g)^{-(m+1)d}-1\bigg) \un_{\{A_0^c\cap \hdots\cap A_{m-1}^c\cap B_0\}}|{\mathcal G}_{k}\bigg) & \mbox{ for $l=0$},\\
    &\PE\bigg(\bigg((1-\g)^{-(m+1)d}-1\bigg) \un_{\{A_0^c\cap \hdots\cap A_{m-1}^c\cap B_0^c\cap\hdots \cap B_{l-1}^c\cap B_l\}}|{\mathcal G}_{k}\bigg) & \mbox{ for $0<l<m$},\\
    &\PE\bigg(\bigg((1-\g\rho_m)^{-(m+1)d}-1\bigg) \un_{\{A_0^c\cap \hdots\cap A_{m-1}^c\cap B_0^c\cap\hdots \cap B_{m-1}^c\}}|{\mathcal G}_{k}\bigg) & \mbox{ for $l=m$}.
   \end{aligned}\right.
\end{equation*}
The inequality \eqref{gapetmaj2} was used for the case $l=m$. 

Let 
\[
\overline{m}\stackrel{\rm def}{=}\left(-\frac{\ln(4d^2Y_k)}{2d(d-1)\ln\left(\frac{1+\g}{1-\g}\right)}\right)^+-1.
\]
Note that
$\overline{m}$ is chosen so that $\rho_m\leq Y_k^{1/2(d-1)}$ for $0\leq m\leq
\overline{m}$. Besides, we may assume that $k$ is large enough so that
$1-p<(1-\g)^d$, $p$ being defined in Lemma~\ref{lem:minopam} (indeed, $T_k\geq
k$, and $\lim_{n\to\infty}\gamma_n=0$ by
A\ref{hyp:stepsize}\ref{hyp:pasdecroiss}).  Therefore, using Lemma
\ref{lem:minopam} for the second inequality,
\begin{align}
\sum_{m\in\N}E_{mm}\leq& \sum_{m\leq \overline{m}}\left((1-\g Y_k^{1/2(d-1)})^{-(m+1)d}-1\right)\PP(A_0^c\cap \hdots\cap A_{m-1}^c|{\mathcal G}_{k})\notag\\&+\sum_{m>\overline{m}}(1-\g)^{-(m+1)d}\PP(A_0^c\cap \hdots\cap A_{m-1}^c|{\mathcal G}_{k})\notag\\
\leq& \sum_{m\in\N}((1-\g Y_k^{1/2(d-1)})^{-(m+1)d}-1)(1-p)^m+\sum_{m>\overline{m}}(1-\g)^{-(m+1)d}(1-p)^m\notag\\
=&\frac{1-(1-\g Y_k^{1/2(d-1)})^d}{p(p+(1-\g Y_k^{1/2(d-1)})^d-1)}+\frac{1}{p+(1-\g)^d-1}\left(\frac{1-p}{(1-\g)^d}\right)^{\lfloor \overline{m}\rfloor +1}.\label{surm2}
\end{align}

The terms $E_{ml}$ for $l<m$ can be dealt with using the following lemma, the proof of which is postponed to the end of this section.

\begin{lemma}\label{lem:majopab}
  Let $A_m, B_m$ and ${\mathcal G}_{k}$ be given by (\ref{defam}),
(\ref{eq:definitions:rho:Bm}) and (\ref{eq:definitions:Gm}).  For $0\leq l<m$,
one has
$$\PP(A_0^c\cap\hdots\cap A_{m-1}^c\cap B_0^c\cap\hdots\cap B_{l-1}^c\cap B_l|{\mathcal G}_{k})\leq Cd\rho_l(1-p)^{m-1} \eqsp.$$
\end{lemma}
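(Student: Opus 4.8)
The plan is to cut the trajectory after time $T_k$ into the consecutive blocks of length $d$, the $j$-th block being $\{T_k+jd+1,\dots,T_k+(j+1)d\}$, and to peel these blocks off one at a time by conditioning successively on the $\sigma$-fields $\mathcal{F}_{T_k+jd}$, using the strong Markov property of $(X_n,\t_n)_n$ exactly as in the proof of Lemma~\ref{lem:minopam}. The events $A_j$ and $B_j$ are $\mathcal{F}_{T_k+(j+1)d}$-measurable, so each lives entirely within block~$j$. For every block with $j\neq l$ I shall simply use $\PP(A_j^c\mid\mathcal{F}_{T_k+jd})\leq 1-p$ -- this is the complement of Lemma~\ref{lem:minopam} and, crucially, holds almost surely for \emph{every} $j$, not only for $j>l$; since there are exactly $m-1$ indices $j\in\{0,\dots,m-1\}\setminus\{l\}$, these contribute the factor $(1-p)^{m-1}$. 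Note that for $j\in\{0,\dots,l-1\}$ the event being estimated also imposes $B_j^c$; I shall discard this extra information in those blocks and use only $A_j^c$. The single remaining block $j=l$ is the one that carries the factor $Cd\rho_l$.

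For block~$l$ I would estimate $\PP(B_l\mid\mathcal{F}_{T_k+ld})$ on the event $\{X_{T_k+ld}\notin\Xset(k)\}$, which contains $B_0^c\cap\dots\cap B_{l-1}^c$ (for $l\geq1$ this is part of $B_{l-1}^c$; for $l=0$ it holds because $X_{T_k}$ lies in a stratum of minimal $\t_{T_k}$-weight whereas $\Xset(k)$ only gathers the strata ranked beyond position $i_{T_k}$). The key one-step estimate is: if $x\notin\Xset(k)$, so $I(x)=(j)_0$ with $j\leq i_{T_k}$, then for $\t=\t_{T_k+n}$ with $n\leq(l+1)d$ one has $P_\t(x,\Xset(k))\leq C\rho_l$; indeed, for every $y\in\Xset(k)$ the acceptance ratio is $\alpha_\t(x,y)=1\wedge\bigl(\tfrac{\pi(y)}{\pi(x)}\tfrac{\t((j)_0)}{\t((i)_0)}\bigr)\leq C\bigl(\tfrac{\t((j)_0)}{\t((i)_0)}\wedge 1\bigr)\leq C\rho_l$ by \eqref{gapetmaj}, and since $x\notin\Xset(k)$ the Dirac part of $P_\t(x,\cdot)$ does not charge $\Xset(k)$, so $P_\t(x,\Xset(k))=\int_{\Xset(k)}q(x,y)\,\alpha_\t(x,y)\,\lambda(dy)\leq C\rho_l\int_\Xset q(x,\cdot)\,d\lambda=C\rho_l$. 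It is important here to bound $P_\t(x,\Xset(k))$ in one stroke rather than summing \eqref{majotrans} over the strata composing $\Xset(k)$, which would cost a spurious factor~$d$. Then, with $\nu\eqdef\inf\{n\geq1:X_{T_k+ld+n}\in\Xset(k)\}$ so that $B_l=\{\nu\leq d\}$, and since on $\{\nu\geq n\}$ the chain is outside $\Xset(k)$ at time $T_k+ld+n-1$ (with $ld+n-1\leq(l+1)d$), a first-entrance decomposition gives $\PP(B_l\mid\mathcal{F}_{T_k+ld})=\sum_{n=1}^{d}\PE\bigl[\un_{\{\nu\geq n\}}\,P_{\t_{T_k+ld+n-1}}(X_{T_k+ld+n-1},\Xset(k))\mid\mathcal{F}_{T_k+ld}\bigr]\leq Cd\rho_l$.

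Assembling the pieces: conditioning on $\mathcal{F}_{T_k+(m-1)d},\dots,\mathcal{F}_{T_k+(l+1)d}$ and using $\PP(A_j^c\mid\mathcal{F}_{T_k+jd})\leq1-p$ peels the $m-1-l$ upper blocks and leaves $(1-p)^{m-1-l}\,\PE[\un_{A_0^c\cap\dots\cap A_l^c\cap B_0^c\cap\dots\cap B_{l-1}^c\cap B_l}\mid\mathcal{G}_k]$; conditioning on $\mathcal{F}_{T_k+ld}$, bounding $\un_{A_l^c}\leq1$ and applying the block-$l$ estimate on the $\mathcal{F}_{T_k+ld}$-measurable set $A_0^c\cap\dots\cap A_{l-1}^c\cap B_0^c\cap\dots\cap B_{l-1}^c\subseteq\{X_{T_k+ld}\notin\Xset(k)\}$ produces the factor $Cd\rho_l$ and leaves $\PE[\un_{A_0^c\cap\dots\cap A_{l-1}^c}\mid\mathcal{G}_k]$; conditioning on $\mathcal{F}_{T_k+(l-1)d},\dots,\mathcal{F}_{T_k}$ and using once more $\PP(A_j^c\mid\mathcal{F}_{T_k+jd})\leq1-p$ bounds this last factor by $(1-p)^l$. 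Multiplying, $\PE[\un_{A_0^c\cap\dots\cap A_{m-1}^c\cap B_0^c\cap\dots\cap B_{l-1}^c\cap B_l}\mid\mathcal{G}_k]\leq (1-p)^{m-1-l}\,Cd\rho_l\,(1-p)^l=Cd\rho_l(1-p)^{m-1}$, which is the claim. The only genuinely delicate point -- the main obstacle -- is this bookkeeping that yields the exponent $m-1$ rather than $m-1-l$: one must notice that the lower blocks $0,\dots,l-1$ each still supply a factor $1-p$ through their $A_j^c$ component, their $B_j^c$ constraints being needed only once, jointly, to guarantee $X_{T_k+ld}\notin\Xset(k)$ when treating block~$l$.
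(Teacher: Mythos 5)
Your proof is correct and follows essentially the same route as the paper's: peel the blocks $j\neq l$ with the uniform bound $\PP(A_j^c\mid\mathcal F_{T_k+jd})\leq 1-p$ from Lemma~\ref{lem:minopam} (giving $(1-p)^{m-1}$ since only block $l$'s $A_l^c$ is discarded), and control block $l$ by a first-entrance decomposition of $B_l$ combined with the one-step bound $\un_{\{x\notin\Xset(k)\}}P_\t(x,\Xset(k))\leq C\rho_l$ coming from \eqref{majotrans}--\eqref{gapetmaj}, using $B_{l-1}^c\subseteq\{X_{T_k+ld}\notin\Xset(k)\}$ (respectively $X_{T_k}\notin\Xset(k)$ when $l=0$). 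Your explicit bookkeeping of the lower blocks' $(1-p)^l$ contribution, and your remark that the transition into $\Xset(k)$ should be bounded in one integral rather than stratum by stratum, merely make explicit what the paper leaves implicit.
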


This lemma ensures that for $l<m$, $E_{ml}\leq
Cd((1-\g)^{-(m+1)d}-1)(1-p)^{m-1}\rho_l$. By Fubini's theorem (the terms of the
sums below are non-negative) and a reasoning similar to the one used above to
estimate the sum $\sum_{m\in\N}E_{mm}$, we obtain
\begin{align}
  \frac{1}{Cd}&\sum_{m\in\N}\sum_{l=0}^{m-1}E_{ml}\leq \sum_{l\in\N}\rho_l\sum_{m\geq l+1}((1-\g)^{-(m+1)d}-1)(1-p)^{m-1}\notag\\
  =&\sum_{l\in\N}\rho_l\left(\frac{(1-\g)^{-d}((1-\g)^{-d}(1-p))^l}{p+(1-\g)^{d}-1}-\frac{(1-p)^l}{p}\right)\notag\\
  \leq&
  Y_k^{1/2(d-1)}\left(\frac{1}{(p+(1-\g)^{d}-1)^2}-\frac{1}{p^2}\right)+\frac{1}{(p+(1-\g)^{d}-1)^2}\left(\frac{1-p}{(1-\g)^{d}}\right)^{\lfloor
    \overline{m}\rfloor +1}.\label{surm3}\end{align} Since $T_k\geq k$, and
$\lim_{n\to\infty}\gamma_n=0$ by A\ref{hyp:stepsize}\ref{hyp:pasdecroiss},
there exists a deterministic constant $\underline{k}$ such that for $k\geq
\underline{k}$, $(1-\g)^{d}\geq 1-\frac{p}{2}$ and $\ln(1+\g(1-Y_k))\geq
\frac{\g}{2}(1-Y_k)$.  In view of~\eqref{surm1}-\eqref{surm2}-\eqref{surm3},
the definition of $\overline{m}$ and
$\ln\left(\frac{1+\g}{1-\g}\right)\leq\frac{2\g}{1-\g}$, there exists a finite
constant $K$ such that, for $k\geq\underline{k}$ and $Y_k\leq 1/4d^2$,
\[
\begin{aligned}
\PE\big(v(Y_{k+1})\big|{\mathcal G}_k\big)-v(Y_k)& \leq -\frac{\g}{2}(1-Y_k) \\
& +K\left(\g Y_k^{1/2(d-1)}+\exp\left(\frac{(1-\g)\ln\left(\frac{1-p}{1-p/2}\right)\ln(4d^2Y_k)}{4d(d-1)\g}\right)\right).
\end{aligned}
\]
This implies that there exists $\bar{y}\in (0,1/4d^2]$ such that,
for all $k\geq\underline{k}$, 
\[
Y_k\leq\bar{y} \Rightarrow \PE(v(Y_{k+1})|{\mathcal G}_k)\leq v(Y_k),
\]
which concludes the proof of Lemma~\ref{lem:surm}.
\end{proof}

\begin{proof}[Proof of Lemma \ref{lem:majopab}]
  Let us first consider the case when $l\geq 1$. By Lemma \ref{lem:minopam}, 
\begin{align*}
  \PP(A_0^c\cap\hdots&\cap A_{m-1}^c\cap B_0^c\cap\hdots\cap B_{l-1}^c\cap
  B_l|{\mathcal G}_{k})\\&\leq (1-p)^{m-1-l}
  \PE\left(\left. \un_{\{A_0^c\cap\hdots\cap
        A_{l-1}^c\}} \un_{\{B_{l-1}^c\}}\PP(B_l|{\mathcal
        F}_{T_k+ld})\right|{\mathcal G}_{k}\right).
\end{align*}
 To conclude, it is therefore enough to check that $ \un_{\{B_{l-1}^c\}}\PP(B_l|{\mathcal F}_{T_k+ld})\leq Cd\rho_l$. Now,
\begin{align*}
  \un_{\{B_{l-1}^c\}}\PP(B_l|{\mathcal F}_{T_k+ld})&\leq \un_{\{X_{T_k+ld}\notin \Xset(k)\}}P_{\theta_{T_k+ld}}(X_{T_k+ld},\Xset(k))\\&+\sum_{n=1}^{d-1}\PP(X_{T_k+ld+1}\notin \Xset(k),\hdots,X_{T_k+ld+n}\notin \Xset(k),X_{T_k+ld+n+1}\in \Xset(k)|{\mathcal F}_{T_k+ld})\\
  &\leq \sum_{n=0}^{d-1}\PE(\un_{\{X_{T_k+ld+n}\notin
    \Xset(k)\}}P_{\theta_{T_k+ld+n}}(X_{T_k+ld+n},\Xset(k))|{\mathcal
    F}_{T_k+ld}),
\end{align*}
where $\un_{\{X_{T_k+ld+n}\notin \Xset(k)\}}P_{\theta_{T_k+ld+n}}(X_{T_k+ld+n},\Xset(k))\leq C\rho_l$
by~\eqref{majotrans} and~\eqref{gapetmaj}.

For the case $l=0$, we use again Lemma~\ref{lem:minopam} to obtain 
\[
\PP(A_0^c\cap\hdots\cap A_{m-1}^c\cap B_0|{\mathcal G}_{k})\leq (1-p)^{m-1}\PP(B_0|{\mathcal G}_{k}).
\] 
The second factor is still bounded from above by $Cd\rho_l$ since $X_{T_k}\notin \Xset(k)$, which gives the claimed result.
\end{proof}

\subsubsection{Adaptation of the stability result to the standard Wang-Landau update \eqref{eq:NL_update}}
\label{sec:stabnl}

Proposition~\ref{auxrecomp} and therefore Theorem \ref{recomp} still hold when the update \eqref{eq:update_weights_linearized} of the weights is replaced by the standard Wang-Landau update \eqref{eq:NL_update}. Indeed, to adapt the proof of \eqref{eq:recomp_1}, it is enough to 
\begin{itemize}
   \item modify the definition \eqref{eqdefim} of $i_m$ into $i_m=\max\{i\leq d: \theta_{T_k+md}((i)_m)\leq\underline{\theta}_{T_k+md}(1+\gamma_1)\}$ to guarantee, using the simplified evolution of ratios of weights 
\[
\frac{\t_{n+1}(i)}{\t_{n+1}(j)}=\frac{\t_{n}(i)}{\t_{n}(j)}\times \frac{1+\gamma_{n+1}\un_{\Xset_i}(X_{n+1})}{1+\gamma_{n+1}\un_{\Xset_j}(X_{n+1})}
\]
under the standard Wang-Landau update, that \eqref{majodeltatk} still holds.
\item replace accordingly the factor $\frac{1-\gamma_1}{1+\gamma_1}$ by $\frac{1}{1+\gamma_1}$ in the definition \eqref{defp} of $p$ in the proof of Lemma \ref{lem:minopam}.
\end{itemize}
To adapt the proof of \eqref{eq:recomp_2}, it is enough to
\begin{itemize}
   \item replace the factor $(1+\gamma_{T_k}(1-Y_k))$ by $\frac{1+\gamma_{T_k}}{1+\gamma_{T_k}Y_k}$ in \eqref{minoy} which causes no complication in the remaining of the proof of Lemma \ref{lem:surm} since one still has $\ln\left(\frac{1+\gamma_{T_k}}{1+\gamma_{T_k}Y_k}\right)=\ln\left(1+\frac{\gamma_{T_k}(1-Y_k)}{1+\gamma_{T_k}Y_k}\right)\geq \frac{\gamma_{T_k}}{2}(1-Y_k)$ for $k$ large enough. Notice that since for $x\geq 0$, $\frac{1}{1+x}\geq 1-x$, one can keep the product in \eqref{minoy} unchanged.
\item replace \eqref{eqminpro} by $\dps \prod_{i=1}^{d-1}\frac{\theta_{T_k}((i+1)_0)}{\theta_{T_k}((i)_0)}=\frac{\max_i\theta_{T_k}(i)(1+\gamma_{T_k}Y_k)}{Y_k(1+\gamma_{T_k})}\geq\frac{1}{2dY_k}$.
\end{itemize}

\subsection{Proof of Theorem~\ref{theo:cvg}}
\label{proof:theo:cvg}

We start by proving that the function~$V$ defined in~\eqref{eq:def:Lyapunov:V} is a Lyapunov function for the mean-field~$h$ given by~\eqref{eq:mean-field}.

\begin{prop}
  \label{prop:Lyapunov}
  Under A\ref{hyp:targetpi}, 
  \begin{enumerate}[a)]
  \item \label{prop:Lyapunov:item0} $V$ is non-negative and continuously
    differentiable on $\Theta$.
  \item \label{prop:Lyapunov:item1} $h$ is continuous on $\Theta$ and given by
    \begin{equation}
      \label{eq:def:mean-field:2}
      h(\t) = \left( \sum_{i=1}^d \frac{\t_\star(i)}{\t(i)} \right)^{-1} \ \left(
  \t_\star - \t \right) \eqsp.
    \end{equation}
  \item \label{prop:Lyapunov:item2} for any $M>0$, $\{\t \in \Theta, \ V(\t) \leq
    M\}$ is a compact subset of $\Theta$.
  \item \label{prop:Lyapunov:item3} for any $\t \in \Theta$, $\pscal{\nabla
      V(\t)}{h(\t)} \leq 0$. In addition, $\{\t \in \Theta, \ \pscal{\nabla V(\t)}{h(\t)} =0
    \} = \{\t_\star \}$.
  \end{enumerate}
\end{prop}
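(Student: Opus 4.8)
The plan is to handle the four items in order, with items \ref{prop:Lyapunov:item2} and \ref{prop:Lyapunov:item3} carrying the real content. For \ref{prop:Lyapunov:item0}, I would note that $V$ is the restriction to $\Theta$ of the map $x\mapsto \sum_{i=1}^d\t_\star(i)\bigl(\log\t_\star(i)-\log x_i\bigr)$, which is $C^\infty$ on $(0,\infty)^d\supset\Theta$; hence $V$ is continuously differentiable on $\Theta$ with $\partial V/\partial\t(i)=-\t_\star(i)/\t(i)$. Non-negativity is Gibbs' inequality: by concavity of $\log$ and Jensen's inequality applied with the probability weights $\t_\star$, one has $\sum_i\t_\star(i)\log(\t(i)/\t_\star(i))\leq\log\sum_i\t(i)=0$, i.e. $V(\t)\geq0$. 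For \ref{prop:Lyapunov:item1}, I would substitute the definition~\eqref{eq:def:ChampsH} of $H$ and~\eqref{eq:def:pitheta} of $\pi_\t$ into~\eqref{eq:mean-field}: writing $Z(\t)=\sum_j\t_\star(j)/\t(j)$, one gets $\int_\Xset\un_{\Xset_i}\pi_\t\,d\lambda=Z(\t)^{-1}\t_\star(i)/\t(i)$ and $\int_\Xset\t(I(x))\pi_\t(x)\,\lambda(dx)=\sum_j\t(j)Z(\t)^{-1}\t_\star(j)/\t(j)=Z(\t)^{-1}$, whence $h_i(\t)=\t(i)Z(\t)^{-1}\bigl(\t_\star(i)/\t(i)-1\bigr)=Z(\t)^{-1}(\t_\star(i)-\t(i))$, which is continuous on $\Theta$ since $Z(\t)>0$ and $\t(i)>0$ there.

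For \ref{prop:Lyapunov:item2}, the point is that a bound on $V$ keeps $\t$ away from the boundary of the simplex, quantitatively in terms of $\t_\star$. Since $\t(i)\leq1$ on $\Theta$, each term $-\log\t(i)$ is non-negative, so from $V(\t)=\sum_i\t_\star(i)\log\t_\star(i)+\sum_i\t_\star(i)\bigl(-\log\t(i)\bigr)$ and $V(\t)\leq M$ one deduces, for every $k$, $\t_\star(k)\bigl(-\log\t(k)\bigr)\leq M-\sum_i\t_\star(i)\log\t_\star(i)=:M'$ (the remaining terms of the sum being $\geq0$). Hence $\t(k)\geq\exp(-M'/\t_\star(k))\geq\exp(-M'/\ut_\star)$ with $\ut_\star:=\min_i\t_\star(i)>0$ by A\ref{hyp:targetpi}. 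Therefore $\{\t\in\Theta,\ V(\t)\leq M\}$ is contained in $\{\t\in\Theta,\ \min_i\t(i)\geq e^{-M'/\ut_\star}\}$, which (using $d\geq2$) is a closed bounded subset of $\Rset^d$ contained in $\Theta$, hence a compact subset of $\Theta$; being relatively closed in $\Theta$ by continuity of $V$ and contained in this compact set, $\{V\leq M\}$ is itself compact.

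For \ref{prop:Lyapunov:item3}, combining the formulas from \ref{prop:Lyapunov:item0} and \ref{prop:Lyapunov:item1},
\[
\pscal{\nabla V(\t)}{h(\t)}=Z(\t)^{-1}\sum_{i=1}^d\Bigl(-\frac{\t_\star(i)}{\t(i)}\Bigr)\bigl(\t_\star(i)-\t(i)\bigr)
=Z(\t)^{-1}\Bigl(1-\sum_{i=1}^d\frac{\t_\star(i)^2}{\t(i)}\Bigr)\eqsp.
\]
By the Cauchy--Schwarz inequality, $1=\bigl(\sum_i\t_\star(i)\bigr)^2=\bigl(\sum_i\tfrac{\t_\star(i)}{\sqrt{\t(i)}}\sqrt{\t(i)}\bigr)^2\leq\bigl(\sum_i\tfrac{\t_\star(i)^2}{\t(i)}\bigr)\bigl(\sum_i\t(i)\bigr)=\sum_i\tfrac{\t_\star(i)^2}{\t(i)}$, so the bracket is $\leq0$ and, since $Z(\t)^{-1}>0$, $\pscal{\nabla V(\t)}{h(\t)}\leq0$. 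Equality in Cauchy--Schwarz forces $(\t_\star(i)/\t(i))_i$ to be constant, i.e. $\t_\star=c\,\t$; summing gives $c=1$, so equality holds exactly at $\t=\t_\star$ (where moreover $h(\t_\star)=0$), and the zero set is $\{\t_\star\}$. I would add the remark that, since $\sum_ih_i(\t)=0$, the vector $h(\t)$ is tangent to the hyperplane $\{\sum_i\t(i)=1\}$, so $\pscal{\nabla V(\t)}{h(\t)}$ is insensitive to how the ambient gradient is split relative to that hyperplane.

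The only step that requires an idea rather than a direct computation is \ref{prop:Lyapunov:item2}: the observation that $V(\t)\leq M$, together with $V\geq0$ and $-\log\t(i)\geq0$, yields a lower bound on every coordinate $\t(i)$ that is uniform once expressed through $\ut_\star$. The remaining items are identities (for \ref{prop:Lyapunov:item1} and \ref{prop:Lyapunov:item3}) or standard convexity inequalities (Jensen for \ref{prop:Lyapunov:item0}, Cauchy--Schwarz for \ref{prop:Lyapunov:item3}).
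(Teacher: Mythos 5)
Your proposal is correct and follows essentially the same route as the paper's proof: the same Jensen argument for non-negativity, the same computation of $h$ via $\int_{\Xset_k}\pi_\t\,d\lambda$, and the same coordinatewise lower bound $\t(j)\geq\exp(-M'/\ut_\star)$ for compactness of the level sets. The only (cosmetic) difference is in item d), where you write $\pscal{\nabla V(\t)}{h(\t)}=Z(\t)^{-1}\bigl(1-\sum_i\t_\star(i)^2/\t(i)\bigr)$ and invoke Cauchy--Schwarz, while the paper uses $\sum_i(\t_\star(i)-\t(i))=0$ to rewrite the same quantity as $-Z(\t)^{-1}\sum_i(\t_\star(i)-\t(i))^2/\t(i)$; the two are identical since $\sum_i(\t_\star(i)-\t(i))^2/\t(i)=\sum_i\t_\star(i)^2/\t(i)-1$.
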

\begin{proof}
  \textit{\eqref{prop:Lyapunov:item0}} It is trivial to check that $V$ is $C^1$
  on $\Theta$. By Jensen's inequality,
\[
V(\t) = - \sum_{i=1}^d \t_\star(i) \log\left( \frac{\t(i)}{\t_\star(i)}\right)
\geq - \log \left( \sum_{i=1}^d \t(i) \right) =0 \eqsp.
\]
\textit{\eqref{prop:Lyapunov:item1}} For any $i \in \{1, \ldots, d \}$, we have
by (\ref{eq:def:ChampsH}) and (\ref{eq:mean-field}),
\[
h_i(\t) = \int_\Xset H_i(x,\t) \, \pi_\t(x) \, \lambda(dx) = \t(i) \int_{\Xset_i}
\pi_\t(x) \, \lambda(dx) - \t(i) \sum_{k=1}^d \t(k) \ \int_{\Xset_k} \pi_\t(x) \ 
\lambda(dx) \eqsp.
\]
The property (\ref{eq:def:mean-field:2}) now follows upon noting that, by definition of
$\pi_\t$ (see (\ref{eq:def:pitheta})), 
\[
\int_{\Xset_k} \pi_\t(x) \, \lambda(dx) = \left( \sum_{i=1}^d
  \frac{\t_\star(i)}{\t(i)} \right)^{-1} \ \frac{\t_\star(k)}{\t(k)} \eqsp.
\]
\textit{\eqref{prop:Lyapunov:item2}} Set $M' \eqdef M -\sum_{i=1}^d \t_\star(i)
\log \t_\star(i)$. Observe that, by A\ref{hyp:targetpi}, $M' > M >0$. 
By definition of $V$ (see~(\ref{eq:def:Lyapunov:V})),
  \[
  \left\{V \leq M\right\} = \left\{\t \in \Theta, \ -\sum_{i=1}^d \t_\star(i)
    \log\t(i) \leq M' \right\} \subseteq \bigcap_{j=1}^d \left\{\t \in
    \Theta, \ \t(j) \geq m \right\} \eqsp.
  \]
  with $m \eqdef \exp(-M'/ \inf_k \t_\star(k))$. Therefore, for any $M >0$,
  there exists $m>0$ such that
\[
\{V \leq M \} \subset  \Big\{ \t \in \Theta, \ m \leq \inf_i \t(i) \leq \sup_i \t(i) \leq
1 \Big\} \eqsp.
\]
Since $V$ is continuous, $\{V \leq M \}$ is a compact subset of $\Theta$.
  
\textit{\eqref{prop:Lyapunov:item3}} By definition of $V$ and $h$ (see
(\ref{eq:def:Lyapunov:V}) and (\ref{eq:def:mean-field:2})), a simple computation shows that
\[
\begin{aligned}
\pscal{\nabla V(\t)}{h(\t)} & = -\left( \sum_{i=1}^d \frac{\t_\star(i)}{\t(i)}\right)^{-1}
\sum_{i=1}^d \frac{\t_\star(i)}{\t(i)} (\t_\star(i)-\t(i))\\
& = - \left( \sum_{i=1}^d \frac{\t_\star(i)}{\t(i)}\right)^{-1} \sum_{i=1}^d \frac{(\t_\star(i)
  -\t(i))^2}{\t(i)} \leq 0\eqsp,
\end{aligned}
\]
where we have used $\sum_{i=1}^d (\t_\star(i)-\t(i)) = 0$ to obtain the second equality.
It is also clear from the above expression that the scalar product is null if and only if 
$\t = \t_\star$.
\end{proof}

We now wish to prove that the increment $\gamma_{n+1} \left(H(X_{n+1},\t_n) -h(\t_n) \right)$ 
in~\eqref{eq:reformulation_using_mean_field} vanishes in an appropriate sense. To this end, we need some preliminary results and we rewrite the update of the weights as
\begin{equation}
   \frac{\t_{n+1}(i)}{\t_n(i)} = 1 + \gamma_{n+1} Y_{n+1}(i) \eqsp,\label{majpoids}
\end{equation}
where $Y_{n+1}(i) \eqdef \un_{\Xset_i}(X_{n+1}) - \t_n(I(X_{n+1}))$
satisfies $|Y_{n+1}(i)| \leq 1$. This key formula says that the
difference $\t_{n+1}(i)-\t_n(i)$ is not simply of order of the
step-size $\gamma_{n+1}$ but of order $\t_n(i)\gamma_{n+1}$ which
permits to circumvent the explosive behavior of the various estimates obtained in the next lemmas as $\min_{1\leq i\leq
  d}\t(i)$ tends to $0$.

\begin{lemma}
   \label{lem:RegularitePi} For any  $\t, \t' \in \Theta$, 
\[
\| \pi_\t d\lambda - \pi_{\t'}d\lambda \|_\tv \leq 2(d-1) \sum_{i=1}^d \left|1 -
  \frac{\t'(i)}{\t(i)}\right| \eqsp.  \]
\end{lemma}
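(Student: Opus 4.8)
The plan is to reduce the statement to an elementary inequality between two probability vectors on $\{1,\dots,d\}$, exploiting that on each stratum the density $\pi_\t$ of~\eqref{eq:def:pitheta} is proportional to $\pi$. First I would record that, with the convention $\|\mu\|_\tv=\sup_{\sup_\Xset|f|\le1}|\mu(f)|$ used in the paper, one has $\|f\,d\lambda-g\,d\lambda\|_\tv=\int_\Xset|f-g|\,d\lambda$ for any integrable densities $f,g$ (take $f$ minus the sign of $f-g$). So the task is to estimate $\int_\Xset|\pi_\t-\pi_{\t'}|\,d\lambda$.

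Set $Z_\t\eqdef\sum_{j=1}^d\t_\star(j)/\t(j)$, so that $\pi_\t=Z_\t^{-1}\pi(x)/\t(i)$ on $\Xset_i$, and put $p_i\eqdef\int_{\Xset_i}\pi_\t\,d\lambda=\t_\star(i)/(Z_\t\t(i))$, defining $Z_{\t'}$ and $q_i$ analogously; both $(p_i)_i$ and $(q_i)_i$ are probability vectors. Since on $\Xset_i$ we have $\pi_\t=(p_i/\t_\star(i))\,\pi$ and $\pi_{\t'}=(q_i/\t_\star(i))\,\pi$, integrating over each $\Xset_i$ and using $\int_{\Xset_i}\pi\,d\lambda=\t_\star(i)$ gives
\[
\|\pi_\t\,d\lambda-\pi_{\t'}\,d\lambda\|_\tv=\sum_{i=1}^d|p_i-q_i|\eqsp.
\]
It then remains to bound $\sum_i|p_i-q_i|$ in terms of the ratios $r_i\eqdef\t'(i)/\t(i)$. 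I would observe that $q_i=p_i/(S r_i)$ with $S\eqdef\sum_{j=1}^dp_j/r_j=Z_{\t'}/Z_\t$, the identity $\sum_iq_i=1$ forcing exactly this normalization, so that $|p_i-q_i|=p_i\,\bigl|1-(Sr_i)^{-1}\bigr|$. Writing $Sr_i-1=r_i(S-1)+(r_i-1)$ and using $r_i>0$ yields $\bigl|1-(Sr_i)^{-1}\bigr|\le|S-1|/S+|1-r_i|/(Sr_i)$; summing against $p_i$, using $\sum_ip_i=1$ and $p_i/(Sr_i)=q_i$, gives
\[
\sum_{i=1}^d|p_i-q_i|\le\frac{|S-1|}{S}+\sum_{i=1}^d q_i\,|1-r_i|\eqsp.
\]
Finally, from $S-1=\sum_jp_j(r_j^{-1}-1)=S\sum_jq_j(1-r_j)$ one gets $|S-1|/S\le\sum_jq_j|1-r_j|$, and since $0<q_j\le1$ this bounds $\sum_i|p_i-q_i|$ by $2\sum_i|1-r_i|$, which is at most $2(d-1)\sum_i|1-r_i|$ because $d\ge2$; this is the claim.

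There is no genuine obstacle here; the only thing to watch is that the factors $1/\t(i)$ and $1/Z_\t$ are not bounded uniformly over $\Theta$, so one must never estimate them separately. Working throughout with the induced probability vectors $p,q$ (which are bounded by $1$) and with the ratios $r_i$ — that is, with the multiplicative form of the weight update emphasised in~\eqref{majpoids} — is precisely what makes the bound uniform. (Incidentally, the argument above yields the constant $2$ rather than $2(d-1)$, which is slightly sharper than needed.)
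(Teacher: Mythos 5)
Your proof is correct. It starts from the same reduction as the paper -- on each stratum $\Xset_i$ the densities $\pi_\t$ and $\pi_{\t'}$ are both proportional to $\pi$, so the total variation distance collapses exactly to the $\ell^1$ distance $\sum_i|p_i-q_i|$ between the induced probability vectors on $\{1,\dots,d\}$ -- but the two arguments then diverge in how they bound this discrete quantity. The paper puts $p_i-q_i$ over the common denominator $Z_\t Z_{\t'}$, expands the numerator as a double sum over pairs $(i,j)$ with $i\neq j$, applies the triangle inequality termwise and minorizes the denominator by a single product $\t_\star(i)\t_\star(j)/(\t(i)\t'(j))$; the factor $(d-1)$ is simply the number of off-diagonal terms surviving this crude count. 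You instead exploit the multiplicative structure $q_i=p_i/(Sr_i)$ with $S=Z_{\t'}/Z_\t$, split $Sr_i-1=r_i(S-1)+(r_i-1)$, and control the normalization error $|S-1|/S$ by the same weighted sum $\sum_j q_j|1-r_j|$ that appears in the other term. This keeps every intermediate quantity bounded by a probability weight and yields the constant $2$ in place of $2(d-1)$, a genuine (if inessential) sharpening; the paper's bound is only used through Corollary~\ref{coro:RegulariteKernel}, where any fixed constant suffices. All the individual steps check out: the identity $S=\sum_j p_j/r_j$, the relation $p_i/(Sr_i)=q_i$, and the bound $|S-1|/S\leq\sum_j q_j|1-r_j|$ are each correct, and your closing remark about never estimating $1/\t(i)$ or $1/Z_\t$ in isolation is exactly the point that makes the lemma usable without a lower bound on $\min_i\t(i)$.
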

\begin{proof}
  By definition of $\pi_\t$ (see~(\ref{eq:def:pitheta})),
\[
\pi_\t(x) = \sum_{i=1}^d \frac{[\t_\star(i)/ \t(i)]}{\sum_{j=1}^d[\t_\star(j)/
  \t(j)]} \ \frac{\pi(x)}{\t_\star(i)} \ \un_{\Xset_i}(x) \eqsp.
\]
Hence,
\[
  \|\pi_\t d\lambda - \pi_{\t'}d\lambda \|_\tv 
  \leq   \frac{\sum_{j=1}^d \sum_{i=1}^d \t_\star(i)\t_\star(j)\left|1/[\t(i)
      \t'(j)] - 1/[\t'(i) \t(j)]\right|}{\sum_{k=1}^d[\t_\star(k)/ \t(k)] \ \ 
    \sum_{l=1}^d[\t_\star(l)/ \t'(l)]} \eqsp.
\]
We denote by $N(\t, \t')$ the numerator of the expression of the right-hand side of the previous inequality. Then,
\begin{align*}
  N(\t, \t') &= \sum_{j=1}^d \sum_{i\neq j} \t_\star(i)\t_\star(j) \ 
  \frac{\left| \t'(i) \t(j) - \t(i) \t'(j) \right|}{\t(i) \t'(i) \t(j) \t'(j) } \\
  &\leq \sum_{j=1}^d \sum_{i\neq j} \t_\star(i)\t_\star(j) \ \left|\frac{ \t(j)
      -\t'(j)}{\t(i) \t(j) \t'(j) } \right|+ \sum_{j=1}^d \sum_{i\neq j}
  \t_\star(i)\t_\star(j) \ \left| \frac{ \t(i) -\t'(i) }{\t(i) \t'(i) \t(j)
    } \right|  \eqsp.
\end{align*}
For the denominator, we use the lower bound
\[
\forall i,j \in \{ 1,\dots,d \}, \qquad
\sum_{k=1}^d[\t_\star(k)/ \t(k)] \ \ \sum_{l=1}^d[\t_\star(l)/ \t'(l)] \geq
\frac{\t_\star(i) \t_\star(j)}{\t(i) \t'(j)}.
\]
Therefore,
\[
\|\pi_\t d\lambda - \pi_{\t'}d\lambda \|_\tv \leq 2\sum_{j=1}^d \sum_{i\neq j} \left|\frac{ \t(j)
    -\t'(j)}{ \t(j) } \right| \leq 2 (d-1) \sum_{j=1}^d \frac{\left|\t(j)
    -\t'(j)\right|}{\t(j)} \eqsp,
\]
which gives the claimed result.
\end{proof}

\begin{lemma}
    \label{lem:RegulariteKernel} For any  $\t, \t' \in \Theta$ and any $x\in \Xset$ such that $\pi_\t(x) \leq \pi_{\t'}(x)$,
    \[
    \| P_\t(x,\cdot) - P_{\t'}(x,\cdot) \|_\tv \leq 2 \ \left( 2 \sup_{i \in
      \{1, \ldots, d\}} \left| 1 - \frac{\t(i)}{\t'(i)}\right| + \sup_{i
      \in \{1, \ldots, d\} } \left| 1 - \frac{\t'(i)}{\t(i)}\right|\ \right)\ \eqsp.
          \]
\end{lemma}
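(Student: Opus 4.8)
The plan is to reduce the total variation distance to a pointwise control of the Metropolis acceptance rates $\alpha_\t,\alpha_{\t'}$, and then to estimate their difference by an elementary computation that exploits the explicit form of $\pi_\t$ given in~\eqref{eq:def:pitheta}.

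\emph{Reduction to the acceptance rates.} Let $f$ be measurable with $\sup_\Xset|f|\le 1$. Using the expression of the symmetric Metropolis kernel recalled just after A\ref{hyp:kernel}, the holding mass at $x$ cancels in the difference and
\[
P_\t f(x)-P_{\t'}f(x)=\int_\Xset q(x,y)\,\bigl(\alpha_\t(x,y)-\alpha_{\t'}(x,y)\bigr)\bigl(f(y)-f(x)\bigr)\,\lambda(dy)\eqsp.
\]
Since $|f(y)-f(x)|\le 2$ and $\int_\Xset q(x,y)\,\lambda(dy)=1$, this yields $\|P_\t(x,\cdot)-P_{\t'}(x,\cdot)\|_\tv\le 2\sup_{y\in\Xset}|\alpha_\t(x,y)-\alpha_{\t'}(x,y)|$, so it is enough to bound $|\alpha_\t(x,y)-\alpha_{\t'}(x,y)|$, for every $y$, by $2\sup_i|1-\t(i)/\t'(i)|+\sup_i|1-\t'(i)/\t(i)|$.

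\emph{Estimating the acceptance rates.} By~\eqref{eq:def:pitheta}, $\pi_\t(z)=\bigl(\sum_j\t_\star(j)/\t(j)\bigr)^{-1}\pi(z)/\t(I(z))$, so the (unknown) normalising constant cancels in the acceptance ratio and, with $r\eqdef\pi(y)/\pi(x)$ which does not depend on the weight vector, $\alpha_\t(x,y)=1\wedge\bigl(r\,\t(I(x))/\t(I(y))\bigr)$ and $\alpha_{\t'}(x,y)=1\wedge\bigl(r\,\t'(I(x))/\t'(I(y))\bigr)$. Now apply the elementary inequality $|(1\wedge s)-(1\wedge s')|\le 1-(s\wedge s')/(s\vee s')$, valid for $s,s'>0$: this is exactly where the common factor $r$ disappears, giving $|\alpha_\t(x,y)-\alpha_{\t'}(x,y)|\le 1-\rho$ where $\rho\le1$ is, according to the ordering of $g\eqdef\t(I(x))/\t(I(y))$ and $g'\eqdef\t'(I(x))/\t'(I(y))$, either $g/g'$ or $g'/g$. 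In either case $\rho=uv$ with $u$ a factor of the form $\t(k)/\t'(k)\in[1-A,1+A]$, where $A\eqdef\sup_i|1-\t(i)/\t'(i)|$, and $v$ a factor of the form $\t'(\ell)/\t(\ell)\in[1-B,1+B]$, where $B\eqdef\sup_i|1-\t'(i)/\t(i)|$ — indeed $g/g'=\tfrac{\t(I(x))}{\t'(I(x))}\cdot\tfrac{\t'(I(y))}{\t(I(y))}$ and $g'/g=\tfrac{\t(I(y))}{\t'(I(y))}\cdot\tfrac{\t'(I(x))}{\t(I(x))}$. Writing $1-uv=(1-v)+v(1-u)$ and using $v\le1+B$, we get $1-\rho\le B+(1+B)A=A+B+AB$, which is $\le 2A+B$ when $B\le1$, while the inequality $2A+B\ge1\ge|\alpha_\t(x,y)-\alpha_{\t'}(x,y)|$ is trivial when $B>1$. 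Combining this with the previous step gives $\|P_\t(x,\cdot)-P_{\t'}(x,\cdot)\|_\tv\le 2(2A+B)$, which is the claim.

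\emph{Main difficulty.} The only delicate point is to run this estimate without ever producing a factor $1/\min_i\t(i)$, which would destroy uniformity over $\Theta$; this forces one to keep $\alpha_\t$ in the multiplicative form $1\wedge(rg)$ and to cancel the common factor $r=\pi(y)/\pi(x)$ through the rescaling inequality above, rather than use a crude Lipschitz bound $|\alpha_\t(x,y)-\alpha_{\t'}(x,y)|\le|\pi_\t(y)/\pi_\t(x)-\pi_{\t'}(y)/\pi_{\t'}(x)|$, which would not be uniform in $\Theta$ and would lose the structural gain emphasised after~\eqref{majpoids} (namely that $\t_{n+1}-\t_n$ is of order $\t_n\gamma_{n+1}$, not $\gamma_{n+1}$). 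The assumption $\pi_\t(x)\le\pi_{\t'}(x)$ is only used to lighten the case bookkeeping (it fixes the sign of some of the auxiliary quantities above), the reversed situation being treated in the same way.
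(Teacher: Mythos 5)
Your proof is correct, and while the first reduction (cancelling the holding mass and bounding the total variation distance by $2\sup_{y}|\alpha_\t(x,y)-\alpha_{\t'}(x,y)|$) is exactly the paper's, the core estimate is handled by a genuinely different and arguably cleaner device. The paper distinguishes four cases according to the signs of $\pi_\t(y)-\pi_\t(x)$ and $\pi_{\t'}(y)-\pi_{\t'}(x)$, and in each case manipulates the ratios directly using the identities~\eqref{eq:RatioPi}; the hypothesis $\pi_\t(x)\leq\pi_{\t'}(x)$ is genuinely invoked there (in the mixed case $\pi_\t(y)\leq\pi_\t(x)$, $\pi_{\t'}(x)\leq\pi_{\t'}(y)$, to write $\pi_\t(x)\leq\pi_{\t'}(x)\leq\pi_{\t'}(y)$). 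You instead factor the acceptance ratio as $1\wedge(rg)$ with $r=\pi(y)/\pi(x)$ independent of the weights, and apply the single inequality $|(1\wedge s)-(1\wedge s')|\leq 1-(s\wedge s')/(s\vee s')$, which cancels $r$ and reduces everything to the product $uv$ of two weight ratios; the bound $1-uv\leq A+B+AB\leq 2A+B$ (the case $B>1$ being trivial) then gives the same constant $2(2A+B)$ in one stroke. I checked the elementary inequality and the factorisations $g/g'$ and $g'/g$; they are correct, and the positivity of $u,v$ needed for the step $v(1-u)\leq(1+B)A$ holds since $\t,\t'\in\Theta$. What your route buys is uniformity over the four sign configurations and, notably, the fact that the hypothesis $\pi_\t(x)\leq\pi_{\t'}(x)$ is never used, so you actually prove the slightly stronger unconditional statement (the paper recovers the general case only by swapping the roles of $\t$ and $\t'$, which permutes $A$ and $B$ in the bound). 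Your closing remark attributing the difficulty to the formula~\eqref{majpoids} is a little off target — that formula concerns the weight updates and only enters in Corollary~\ref{coro:RegulariteKernel} — but the substantive point, that one must exploit the ratio structure inside the $1\wedge$ rather than a crude Lipschitz bound on $\pi_\t(y)/\pi_\t(x)$, is the right one and is exactly what both proofs do, by different means.
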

\begin{proof}
  For any $x \in \Xset_j$ and $y \in \Xset_k$, we have by definition of
  $\pi_\t$ (see (\ref{eq:def:pitheta}))
\begin{equation}
  \label{eq:RatioPi}
  \frac{\pi_\t(x) \pi_{\t'}(y)  }{\pi_{\t}(y) \pi_{\t'}(x)} = \frac{\t(k)}{\t(j)}
\frac{\t'(j)}{\t'(k)}  \eqsp, \qquad \frac{\pi_\t(x)}{\pi_{\t'}(x)} = \frac{\t'(j)}{\t(j)} \eqsp.
\end{equation}
Since $P_\t$ is a Metropolis kernel, for any bounded measurable function $f$,
\begin{align*}
  \left| P_\t f(x) - P_{\t'} f(x) \right| & = \left| \int_\Xset q(x,y) \left(
      \alpha_\t(x,y) -
      \alpha_{\t'}(x,y) \right) \left(f(y) - f(x) \right) \lambda(dy)  \right|\eqsp, \\
  & \leq 2 \sup_\Xset |f| \ \sup_{\Xset^2} \left|\alpha_\theta - \alpha_{\t'}
  \right| \eqsp,
\end{align*}
with $\alpha_\t(x,y) = 1 \wedge (\pi_\t(y) / \pi_\t(x))$.  Let us distinguish
all the cases:

$\bullet$ $\pi_\t(y) \leq \pi_\t(x)$ and $\pi_{\t'}(y) \leq \pi_{\t'}(x)$.
Then,
\begin{align*}
  \left| \alpha_\t(x,y) - \alpha_{\t'}(x,y) \right| &= \left|
    \frac{\pi_\t(y)}{\pi_\t(x)} - \frac{\pi_{\t'}(y)}{\pi_{\t'}(x)} \right|
  \leq \frac{ \left| \pi_\t(y) - \pi_{\t'}(y) \right|}{\pi_\t(x)} +\frac{
    \left| \pi_\t(x) - \pi_{\t'}(x) \right|}{\pi_\t(x)}  \\
  & \leq \frac{ \left| \pi_\t(y) - \pi_{\t'}(y) \right|}{\pi_\t(y)} +\frac{
    \left| \pi_\t(x) -
      \pi_{\t'}(x) \right|}{\pi_\t(x)} \\
  &\leq 2 \sup_\Xset \left| 1 - \frac{\pi_{\t'}}{\pi_\t}\right| = 2 \sup_{i \in
    \{1, \ldots, d \}} \left| 1 - \frac{\t(i)}{\t'(i)}\right| \eqsp,
\end{align*}
where we used (\ref{eq:RatioPi}) in the last equality.

$\bullet$ $\pi_\t(y) \leq \pi_\t(x)$ and $\pi_{\t'}(x) \leq \pi_{\t'}(y)$.
Since $\pi_{\t}(x) \leq \pi_{\t'}(x) \leq \pi_{\t'}(y)$, it holds
\[
\left| \alpha_\t(x,y) - \alpha_{\t'}(x,y) \right| = 1-
\frac{\pi_\t(y)}{\pi_\t(x)} \leq 1 - \frac{\pi_\t(y)}{\pi_{\t'}(y)} \leq
\sup_\Xset \left| 1 - \frac{\pi_{\t}}{\pi_{\t'}}\right| = \sup_{i \in \{1,
  \ldots, d \}} \left| 1 - \frac{\t'(i)}{\t(i)}\right| \eqsp.
\]

$\bullet$ $\pi_\t(x) \leq \pi_\t(y)$ and $\pi_{\t'}(x) \leq \pi_{\t'}(y)$.
Then, $ \left| \alpha_\t(x,y) - \alpha_{\t'}(x,y) \right| =0$.

$\bullet$ $\pi_\t(x) \leq \pi_\t(y)$ and $\pi_{\t'}(y) \leq \pi_{\t'}(x)$.
Then, using again (\ref{eq:RatioPi}),
\begin{align*}
   \left| \alpha_\t(x,y) - \alpha_{\t'}(x,y) \right| &= 1 -
\frac{\pi_{\t'}(y)}{\pi_{\t'}(x)} \leq 1 -
\frac{\pi_{\t'}(y)}{\pi_{\t'}(x)}\frac{\pi_{\t}(x)}{\pi_{\t}(y)} 
=\frac{\pi_{\t'}(x)-\pi_{\t}(x)}{\pi_{\t'}(x)}+\frac{\pi_{\t}(x)}{\pi_{\t'}(x)}\frac{\pi_{\t}(y)-\pi_{\t'}(y)}{\pi_{\t}(y)}\\
&\leq \sup_{i
  \in \{1, \ldots, d\} } \left| 1 - \frac{\t'(i)}{\t(i)}
  \right|+\sup_{i
  \in \{1, \ldots, d\} } \left| 1 - \frac{\t(i)}{\t'(i)}
  \right| \eqsp.
\end{align*}
This concludes the proof.
\end{proof}

As a corollary of Lemmas~\ref{lem:RegularitePi} and ~\ref{lem:RegulariteKernel}, we obtain the following result.

\begin{coro}
  \label{coro:RegulariteKernel}
  Under A\ref{hyp:stepsize}\ref{hyp:pasdecroiss} and
  A\ref{hyp:stepsize}\ref{hyp:stepsize:item3}, 
  \begin{equation}
    \label{eq:regularite:pi}
    \| \pi_{\t_n}d\lambda - \pi_{\t_{n+1}} d\lambda\|_\tv \leq 2 d(d-1)\, \gamma_{n+1},  \forall n \geq 0 \,
  \end{equation}
  and for any $N \geq 0$ 
  \[
  \sup_{x \in \Xset} \| P_{\t_n}(x,\cdot) - P_{\t_{n+1}}(x,\cdot) \|_\tv \leq 4 \left(1 + \frac{1}{1-\sup_{n \geq N} \gamma_{n+1}} \right) \ \gamma_{n+1}, \quad  \forall n \geq N \ .
  \]
\end{coro}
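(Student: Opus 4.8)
The plan is to read both inequalities off the two regularity lemmas, using the multiplicative form of the weight update $\t_{n+1}(i)/\t_n(i) = 1 + \gamma_{n+1} Y_{n+1}(i)$ (with $|Y_{n+1}(i)| \leq 1$, see~\eqref{majpoids}) to control the ratios $\t_{n+1}(i)/\t_n(i)$ and $\t_n(i)/\t_{n+1}(i)$ that occur. For~\eqref{eq:regularite:pi} I would simply apply Lemma~\ref{lem:RegularitePi} with $(\t,\t') = (\t_n,\t_{n+1})$: by~\eqref{majpoids}, $|1 - \t_{n+1}(i)/\t_n(i)| = \gamma_{n+1}|Y_{n+1}(i)| \leq \gamma_{n+1}$ for each $i$, so the sum over $i$ is at most $d\gamma_{n+1}$ and the claimed bound follows with constant $2d(d-1)$. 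This step uses only~\eqref{majpoids}.

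For the second bound, fix $x \in \Xset$ and $n \geq N$. Since $\pi_{\t_n}(x)$ and $\pi_{\t_{n+1}}(x)$ are ordered one way or the other, I would split into two cases and apply Lemma~\ref{lem:RegulariteKernel} with the pair arranged so that its hypothesis $\pi_\t(x) \leq \pi_{\t'}(x)$ holds: take $(\t,\t') = (\t_n,\t_{n+1})$ when $\pi_{\t_n}(x) \leq \pi_{\t_{n+1}}(x)$, and $(\t,\t') = (\t_{n+1},\t_n)$ otherwise. In each case Lemma~\ref{lem:RegulariteKernel} produces two supremum terms, one involving $|1 - \t_{n+1}(i)/\t_n(i)|$, which~\eqref{majpoids} bounds directly by $\gamma_{n+1}$, and one involving the reciprocal $|1 - \t_n(i)/\t_{n+1}(i)|$. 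For the latter I would write $1 - \t_n(i)/\t_{n+1}(i) = \gamma_{n+1}Y_{n+1}(i)/(1 + \gamma_{n+1}Y_{n+1}(i))$ and bound the denominator from below: since $|Y_{n+1}(i)| \leq 1$, it is at least $1 - \gamma_{n+1} > 0$, whence $|1 - \t_n(i)/\t_{n+1}(i)| \leq \gamma_{n+1}/(1-\gamma_{n+1})$.

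It is here that Assumption~A\ref{hyp:stepsize}\ref{hyp:pasdecroiss} is used: since $\{\gamma_m\}$ is non-increasing and tends to $0$, one has $\sup_{m\geq N}\gamma_{m+1} = \gamma_{N+1} < 1$, so the denominator stays strictly positive and, for $n \geq N$, $\gamma_{n+1}/(1-\gamma_{n+1}) \leq \gamma_{n+1}/(1-\sup_{m\geq N}\gamma_{m+1})$. Feeding these two ratio estimates into Lemma~\ref{lem:RegulariteKernel} yields, in both cases, a bound of the form $2(2a+b)$ or $2(2b+a)$ with $a \leq \gamma_{n+1}$ and $b \leq \gamma_{n+1}/(1-\sup_{m\geq N}\gamma_{m+1})$, and since both $2(2a+b)$ and $2(2b+a)$ are at most $4\gamma_{n+1} + 4\gamma_{n+1}/(1-\sup_{m\geq N}\gamma_{m+1}) = 4(1 + (1-\sup_{m\geq N}\gamma_{m+1})^{-1})\gamma_{n+1}$, the estimate holds uniformly in $x$; taking the supremum over $x$ concludes. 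The only step requiring a little care rather than mechanical manipulation is the asymmetry of Lemma~\ref{lem:RegulariteKernel}: one must orient the pair $(\t_n,\t_{n+1})$ according to the sign of $\pi_{\t_n}(x)-\pi_{\t_{n+1}}(x)$ and track which of the two ratios acquires the $(1-\gamma_{n+1})^{-1}$ factor; everything else is routine. Note that only A\ref{hyp:stepsize}\ref{hyp:pasdecroiss} is genuinely needed; A\ref{hyp:stepsize}\ref{hyp:stepsize:item3} does not enter these two estimates.
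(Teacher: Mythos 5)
Your proposal is correct and follows essentially the same route as the paper: \eqref{eq:regularite:pi} is read off Lemma~\ref{lem:RegularitePi} via \eqref{majpoids}, and the kernel bound comes from Lemma~\ref{lem:RegulariteKernel} after symmetrizing its conclusion to $4\bigl(\sup_i|1-\t_{n+1}(i)/\t_n(i)|+\sup_i|1-\t_n(i)/\t_{n+1}(i)|\bigr)$ and bounding the reciprocal ratio by $\gamma_{n+1}/(1-\gamma_{n+1})$. Your remarks on the orientation of the pair $(\t_n,\t_{n+1})$ and on A\ref{hyp:stepsize}\ref{hyp:stepsize:item3} being superfluous here are both accurate.
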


\begin{proof}
The inequality (\ref{eq:regularite:pi}) immediately 
follows from Lemma~\ref{lem:RegularitePi}, \eqref{majpoids} and the
upper bound $|Y_{n+1}(i)| \leq 1$. 
In addition, by Lemma~~\ref{lem:RegulariteKernel},
  \begin{align*}
    \|P_{\t_{n+1}}(x,\cdot)-P_{\t_n}(x,\cdot) \|_\tv & \leq 4\left(
      \sup_{i}\left|1 - \frac{\t_{n+1}(i)}{\t_{n}(i)} \right| +\sup_{i}\left|1
        - \frac{\t_{n}(i)}{\t_{n+1}(i)} \right|
    \right)  \\
    & \leq 4\gamma_{n+1} \left( \sup_{i} |Y_{n+1}(i)| +
      \sup_{i}\frac{|Y_{n+1}(i)|}{|1+\gamma_{n+1}Y_{n+1}(i)|}\right)\eqsp.
\end{align*}
The proof is concluded upon noting that $|Y_{n+1}(i)| \leq 1$ and $1 -
\gamma_{n+1} \geq 1 - \sup_{n \geq N} \gamma_{n+1}$. 
\end{proof}

\begin{lemma}
  \label{lem:RegularitePoisson}
  Assume  A\ref{hyp:targetpi} and A\ref{hyp:kernel}. Then, for any $\t \in \Theta$, 
  there exists a function $\hatH_\t$
  solving the Poisson equation $\hatH_\t - P_\t \hatH_\t = H(\cdot, \t) -
  \pi_\t(H(\cdot,\t)) = H(\cdot, \t) - h(\t)$. In addition, 
  \[
  \sup_{\t \in \Theta, x \in \Xset} \left|\hatH_\t(x)\right| < \infty \eqsp,
  \]
  and there exists a constant $C>0$ such that for any $\t,\t' \in \Theta$,
  \[
  \sup_{\Xset} \left\{ 
  \left| \hatH_\t - \hatH_{\t'} \right| + \left| P_\t \hatH_\t - P_{\t'} \hatH_{\t'} \right| 
  \right \} \leq C \frac{ |\t - \t'|
  }{ \dps \inf_{i \in \{1,\ldots, d\}} \left\{ \t(i) \wedge \t'(i) \right\} }
  \eqsp.
  \]
\end{lemma}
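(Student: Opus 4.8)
\emph{Plan.} The plan is to construct $\hatH_\t$ explicitly as a Neumann series and read off the uniform bound from the geometric ergodicity of Proposition~\ref{prop:unifergo}, then to obtain the regularity estimate by subtracting the two Poisson equations and controlling the resulting fixed-point equation.

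For the first two claims I would set $g_\t \eqdef H(\cdot,\t)-h(\t)$ and define
\[
\hatH_\t \eqdef \sum_{n\geq 0} P_\t^n g_\t \eqsp.
\]
Since $H$ takes values in $[-1,1]^d$ by~\eqref{eq:def:ChampsH}, $g_\t$ is bounded uniformly in $\t$, and $\pi_\t(g_\t)=0$ by the definition~\eqref{eq:mean-field} of $h$. Writing $P_\t^n g_\t(x)=\int g_\t(y)\,\big(P_\t^n(x,\cdot)-\pi_\t\,d\lambda\big)(dy)$ and invoking~\eqref{eq:uniformeergodicite}, one gets $\sup_{\t\in\Theta}\sup_{x\in\Xset}|P_\t^n g_\t(x)|\leq C_0(1-\rho)^n$ with $C_0$ not depending on $\t$. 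Hence the series converges absolutely and uniformly, $\sup_{\t,x}|\hatH_\t(x)|\leq C_0/\rho<\infty$, and a telescoping argument (legitimate by absolute convergence) gives $\hatH_\t-P_\t\hatH_\t=g_\t$. I record that this solution is normalized by $\pi_\t(\hatH_\t)=\sum_{n\geq 0}\pi_\t(P_\t^n g_\t)=0$; this normalization is used crucially below.

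For the regularity estimate, fix $\t,\t'\in\Theta$, write $\lesssim$ for inequality up to a multiplicative constant depending only on $d$, $\rho$ and $\pi$, and set $\Delta\eqdef\hatH_\t-\hatH_{\t'}$. Subtracting the two Poisson equations,
\[
\Delta-P_\t\Delta = R\eqsp,\qquad R\eqdef (g_\t-g_{\t'})+(P_\t-P_{\t'})\hatH_{\t'}\eqsp.
\]
The crucial point is that $\pi_\t(R)=0$: indeed $\pi_\t(g_\t)=0$, and $\pi_\t\big((P_\t-P_{\t'})\hatH_{\t'}\big)=\pi_\t(\hatH_{\t'})-\pi_\t(P_{\t'}\hatH_{\t'})=\pi_\t(\hatH_{\t'}-P_{\t'}\hatH_{\t'})=\pi_\t(g_{\t'})$, using the $P_\t$-invariance $\pi_\t(P_\t\hatH_{\t'})=\pi_\t(\hatH_{\t'})$. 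Iterating the identity for $\Delta$ and using $\sup_{\t,x}|\hatH_\t|<\infty$ together with Proposition~\ref{prop:unifergo} to pass to the limit, one obtains $\Delta=\sum_{n\geq 0}P_\t^n R+\pi_\t(\Delta)$, hence $\sup_\Xset|\Delta|\leq (2/\rho)\,\|R\|_\infty+|\pi_\t(\Delta)|$. Moreover $\pi_\t(\Delta)=\pi_\t(\hatH_\t)-\pi_\t(\hatH_{\t'})=-(\pi_\t-\pi_{\t'})(\hatH_{\t'})$ since $\pi_\t(\hatH_\t)=\pi_{\t'}(\hatH_{\t'})=0$, so $|\pi_\t(\Delta)|\leq\sup_\Xset|\hatH_{\t'}|\cdot\|\pi_\t\,d\lambda-\pi_{\t'}\,d\lambda\|_\tv\lesssim |\t-\t'|/\inf_i\{\t(i)\wedge\t'(i)\}$ by Lemma~\ref{lem:RegularitePi}.

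It remains to bound $\|R\|_\infty$. From~\eqref{eq:def:ChampsH} a direct computation gives $|H_i(x,\t)-H_i(x,\t')|\leq |\t(i)-\t'(i)|+\sup_j|\t(j)-\t'(j)|$, so $\|H(\cdot,\t)-H(\cdot,\t')\|_\infty\lesssim |\t-\t'|$; writing $h(\t)-h(\t')=\pi_\t\big(H(\cdot,\t)-H(\cdot,\t')\big)+(\pi_\t-\pi_{\t'})\big(H(\cdot,\t')\big)$ and using Lemma~\ref{lem:RegularitePi} for the second term, $|h(\t)-h(\t')|\lesssim |\t-\t'|/\inf_i\{\t(i)\wedge\t'(i)\}$, and therefore $\|g_\t-g_{\t'}\|_\infty\lesssim |\t-\t'|/\inf_i\{\t(i)\wedge\t'(i)\}$. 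For the other piece, $\|(P_\t-P_{\t'})\hatH_{\t'}\|_\infty\leq\sup_\Xset|\hatH_{\t'}|\cdot\sup_x\|P_\t(x,\cdot)-P_{\t'}(x,\cdot)\|_\tv$, and applying Lemma~\ref{lem:RegulariteKernel} with the roles of $\t,\t'$ exchanged according to the sign of $\pi_\t(x)-\pi_{\t'}(x)$, together with $|1-\t(i)/\t'(i)|\vee|1-\t'(i)/\t(i)|\leq |\t-\t'|/\inf_i\{\t(i)\wedge\t'(i)\}$, this is again $\lesssim |\t-\t'|/\inf_i\{\t(i)\wedge\t'(i)\}$. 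Combining these estimates bounds $\sup_\Xset|\hatH_\t-\hatH_{\t'}|$ as claimed; finally, since $P_\t\hatH_\t=\hatH_\t-g_\t$, we have $P_\t\hatH_\t-P_{\t'}\hatH_{\t'}=(\hatH_\t-\hatH_{\t'})-(g_\t-g_{\t'})$, so the same bound controls $\sup_\Xset|P_\t\hatH_\t-P_{\t'}\hatH_{\t'}|$. The main obstacle, and the reason one does not lose a higher power of $\inf_i\t(i)$, is the centering identity $\pi_\t(R)=0$: it forces the geometric decay of the iterates $P_\t^n R$ and confines the only uncontrolled contribution to the additive constant $\pi_\t(\Delta)$, which is itself harmless by Lemma~\ref{lem:RegularitePi}; without it one could only sum $\|R\|_\infty$ over infinitely many $n$.
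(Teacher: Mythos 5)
Your proposal is correct, and it arrives at the same estimate through the same two structural inputs (Lemmas~\ref{lem:RegularitePi} and~\ref{lem:RegulariteKernel} plus the uniform minorization of Proposition~\ref{prop:unifergo}), but it differs from the paper in one substantive way: where the paper invokes an external perturbation result for Poisson solutions (Lemma~4.2 of the reference \cite{fort:moulines:priouret:2011}) to reduce $\sup_\Xset\{|\hatH_\t-\hatH_{\t'}|+|P_\t\hatH_\t-P_{\t'}\hatH_{\t'}|\}$ to the three quantities $\sup_\Xset|H(\cdot,\t)-H(\cdot,\t')|$, $\sup_x\|P_\t(x,\cdot)-P_{\t'}(x,\cdot)\|_\tv$ and $\|\pi_\t d\lambda-\pi_{\t'}d\lambda\|_\tv$, you prove that reduction from scratch. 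Your argument is the standard one underlying that cited lemma, and you identify its two load-bearing points correctly: the normalization $\pi_\t(\hatH_\t)=0$ built into the series $\sum_{n\ge 0}P_\t^n g_\t$ (which the paper also uses implicitly, cf.\ the explicit formula for $\hatH_{\t_\star}$ in Theorem~\ref{theo:CLT:theta}), and the centering $\pi_\t(R)=0$ of the residual $R=(g_\t-g_{\t'})+(P_\t-P_{\t'})\hatH_{\t'}$, which is what makes $\sum_n P_\t^n R$ summable with a uniform geometric rate and confines the remaining contribution to the constant $\pi_\t(\Delta)=-(\pi_\t-\pi_{\t'})(\hatH_{\t'})$, itself controlled by Lemma~\ref{lem:RegularitePi}. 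The downstream estimates of $\|R\|_\infty$ (Lipschitz bound on $H$ in $\t$, the decomposition of $h(\t)-h(\t')$, and the symmetric application of Lemma~\ref{lem:RegulariteKernel} according to the sign of $\pi_\t(x)-\pi_{\t'}(x)$) all check out and match what the paper does after citing the external lemma. What your route buys is self-containedness and transparency about where the factor $1/\inf_i\{\t(i)\wedge\t'(i)\}$ enters (only once, through the TV bounds); what the paper's route buys is brevity. No gaps.
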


\begin{proof}
  Since $\sup_{\t \in \Theta} \sup_{x \in \Xset} |H(x, \t)| \leq 1$, the
  results of Proposition~\ref{prop:unifergo} show that $\hatH_\t$ exists for
  any $\t \in \Theta$ and (see
  \textit{e.g.}~\cite[Section~17.4.1]{meyn:tweedie:2009})
\begin{equation}
  \label{eq:upperbound:HatH}
\sup_{\t \in \Theta} \sup_{x \in \Xset} \left|\hatH_\t(x)\right| 
\leq \sup_{\t \in \Theta} \sup_{x \in \Xset} \sum_{n \geq 0} \left| P_\t^n
H(\cdot,\t)(x) - \pi_\t(H(\cdot, \t)) \right| \leq \frac{2}{\rho} \eqsp. 
\end{equation}
In addition, in view of Proposition~\ref{prop:unifergo} and 
\cite[Lemma~4.2.]{fort:moulines:priouret:2011}, there exists a constant $C$ such
that, for any $\t, \t' \in \Theta$,
\begin{multline*}
  \sup_\Xset \left| P_{\t}\hatH_{\t} - P_{\t'}\hatH_{\t'} \right| +
  \sup_{\Xset} \left| \hatH_\t - \hatH_{\t'} \right|
\\ \leq C \left(
    \sup_\Xset \left| H(\cdot,\t) - H(\cdot,{\t'}) \right| + \sup_{x \in \Xset}
    \| P_\t(x,\cdot) - P_{\t'}(x,\cdot) \|_\tv + \|\pi_\t d\lambda - \pi_{\t'} d\lambda\|_\tv
  \right) \eqsp.
\end{multline*}
By definition of $H$ (see~(\ref{eq:def:ChampsH})), there exists a constant $C'$
such that for any $\t, \t' \in \Theta$,
\[
\sup_\Xset \left| H(\cdot,\t) - H(\cdot,{\t'}) \right| \leq C' | \t - \t'|
\eqsp.
\]
The proof is then concluded by Lemmas~\ref{lem:RegularitePi} and
\ref{lem:RegulariteKernel}.
\end{proof}

\begin{prop}
\label{prop:remainder:cvg}
Assume A\ref{hyp:targetpi}, A\ref{hyp:kernel} and
A\ref{hyp:stepsize}. 
Then, almost-surely,
\[
\limsup_{k \to \infty} \ \sup_{\ell \geq k}\left|\sum_{n=k}^\ell \gamma_{n+1}
  \left(H(X_{n+1},\t_n) -h(\t_n) \right) \right| = 0 \eqsp.
\]
\end{prop}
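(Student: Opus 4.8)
The plan is to resolve the centered increment $H(X_{n+1},\t_n)-h(\t_n)$ through the Poisson equation supplied by Lemma~\ref{lem:RegularitePoisson}, which splits it into a martingale part and a summation-by-parts remainder; note that the recurrence Theorem~\ref{recomp} is not needed here. Write $g_n\eqdef\hatH_{\t_n}$ and $w_n\eqdef P_{\t_n}g_n$, so that $g_n-w_n=H(\cdot,\t_n)-h(\t_n)$ and, with $e_{n+1}\eqdef g_n(X_{n+1})-w_n(X_n)$, for every $k\le\ell$
\[
\sum_{n=k}^\ell\gamma_{n+1}\bigl(H(X_{n+1},\t_n)-h(\t_n)\bigr)=\sum_{n=k}^\ell\gamma_{n+1}e_{n+1}+\sum_{n=k}^\ell\gamma_{n+1}\bigl(w_n(X_n)-w_n(X_{n+1})\bigr)\eqsp.
\]
By Lemma~\ref{lem:RegularitePoisson} the constant $C_\star\eqdef\sup_{\t\in\Theta}\sup_\Xset|\hatH_\t|$ is finite and deterministic, so $\sup_n\sup_\Xset\bigl(|g_n|\vee|w_n|\bigr)\le C_\star$.

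For the martingale term: since $\t_n$ is $\F_n$-measurable, \eqref{eq:MarkovDynamic} gives $\PE[g_n(X_{n+1})\vert\F_n]=P_{\t_n}g_n(X_n)=w_n(X_n)$, so $(e_{n+1})_{n\ge0}$ is a martingale-difference sequence with $|e_{n+1}|\le2C_\star$. Hence $M_k\eqdef\sum_{n=0}^{k-1}\gamma_{n+1}e_{n+1}$ obeys $\PE[M_k^2]=\sum_{n=0}^{k-1}\gamma_{n+1}^2\PE[e_{n+1}^2]\le4C_\star^2\sum_{n\ge1}\gamma_n^2<\infty$ by A\ref{hyp:stepsize}\ref{hyp:stepsize:item3}, so $M_k$ is an $L^2$-bounded martingale and converges almost surely; its partial sums being a.s.\ Cauchy, $\limsup_{k\to\infty}\sup_{\ell\ge k}\bigl|\sum_{n=k}^\ell\gamma_{n+1}e_{n+1}\bigr|=0$ almost surely.

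For the remainder term I would use summation by parts: for $k\le\ell$,
\begin{multline*}
\sum_{n=k}^\ell\gamma_{n+1}\bigl(w_n(X_n)-w_n(X_{n+1})\bigr)=\gamma_{k+1}w_k(X_k)-\gamma_{\ell+1}w_\ell(X_{\ell+1})\\
+\sum_{n=k+1}^\ell\gamma_{n+1}\bigl(w_n(X_n)-w_{n-1}(X_n)\bigr)+\sum_{n=k+1}^\ell(\gamma_{n+1}-\gamma_n)\,w_{n-1}(X_n)\eqsp.
\end{multline*}
The first two terms are bounded in absolute value by $\gamma_{k+1}C_\star$ (using that $\{\gamma_n\}$ is non-increasing for the second), uniformly in $\ell\ge k$. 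For the third, the crucial input is that consecutive iterates are close in the relevant multiplicative sense: by \eqref{majpoids} with $|Y_{n+1}(i)|\le1$ one has $|\t_n-\t_{n+1}|\le\gamma_{n+1}$, and feeding the bounds of Corollary~\ref{coro:RegulariteKernel}, the Lipschitz estimate $\sup_\Xset|H(\cdot,\t)-H(\cdot,\t')|\le C|\t-\t'|$ (from the proof of Lemma~\ref{lem:RegularitePoisson}), and $|\t_n-\t_{n+1}|\le\gamma_{n+1}$ into \cite[Lemma~4.2]{fort:moulines:priouret:2011} exactly as in the proof of Lemma~\ref{lem:RegularitePoisson} yields a finite deterministic constant $C$ with $\sup_\Xset|w_n-w_{n-1}|\le C\gamma_n$ for all $n\ge1$; hence the third term is at most $C\sum_{n\ge k+1}\gamma_{n+1}\gamma_n\le C\sum_{n\ge k+1}\gamma_n^2$. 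The fourth term is at most $C_\star\sum_{n\ge k+1}(\gamma_n-\gamma_{n+1})=C_\star\gamma_{k+1}$. All these bounds are uniform in $\ell$ and vanish as $k\to\infty$ by A\ref{hyp:stepsize}\ref{hyp:pasdecroiss} and A\ref{hyp:stepsize}\ref{hyp:stepsize:item3}, so $\limsup_{k\to\infty}\sup_{\ell\ge k}\bigl|\sum_{n=k}^\ell\gamma_{n+1}(w_n(X_n)-w_n(X_{n+1}))\bigr|=0$ almost surely; adding the two estimates finishes the proof.

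The step I expect to be the main obstacle is the uniform-in-$n$ regularity bound $\sup_\Xset|w_{n+1}-w_n|=\mathrm{O}(\gamma_{n+1})$ on the Poisson solutions: the generic bound of Lemma~\ref{lem:RegularitePoisson} blows up like $1/\min_i\t(i)$, so one must genuinely exploit the multiplicative structure \eqref{majpoids} of the Wang--Landau update---which is precisely what makes the estimates of Corollary~\ref{coro:RegulariteKernel} independent of $\underline{\theta}_n$---in order to obtain a constant not requiring any lower bound on $\underline{\theta}_n$. Everything else, namely the martingale convergence argument and the Abel summation, is routine bookkeeping.
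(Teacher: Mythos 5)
Your proposal is correct and follows essentially the same route as the paper: a Poisson-equation decomposition into the martingale increment $\hatH_{\t_n}(X_{n+1})-P_{\t_n}\hatH_{\t_n}(X_n)$ (handled by $L^2$-boundedness and A\ref{hyp:stepsize}\ref{hyp:stepsize:item3}) plus the remainder $P_{\t_n}\hatH_{\t_n}(X_n)-P_{\t_n}\hatH_{\t_n}(X_{n+1})$, which the paper splits into a telescoping term treated by Abel's transform and a kernel-shift term $P_{\t_{n+1}}\hatH_{\t_{n+1}}-P_{\t_n}\hatH_{\t_n}$ bounded by $C\gamma_{n+1}$ uniformly on $\Xset$ — exactly the two pieces your single summation by parts produces. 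You also correctly identify the one genuinely delicate point, namely that the uniform $\mathrm{O}(\gamma_{n+1})$ regularity of the Poisson solutions must come from the multiplicative form \eqref{majpoids} of the update via Corollary~\ref{coro:RegulariteKernel}, not from the generic bound of Lemma~\ref{lem:RegularitePoisson} which degenerates as $\underline{\theta}_n\to0$.
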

\begin{proof}
We decompose the increment into a martingale term and two remainders,
using the function $\hatH_\t$ defined in Lemma~\ref{lem:RegularitePoisson}:
\[
H(X_{n+1},\t_n) -h(\t_n) = \hatH_{\t_n}(X_{n+1}) -
P_{\t_n}\hatH_{\t_n}(X_{n+1}) =  M_{n+1} + R_{n+1}^{(1)} + R_{n+1}^{(2)} \eqsp,
\]
with
\begin{align*}
  M_{n+1} &= \hatH_{\t_n}(X_{n+1}) - P_{\t_n}\hatH_{\t_n}(X_{n})  \eqsp, \\
  R_{n+1}^{(1)} &= P_{\t_n}\hatH_{\t_n}(X_{n}) - P_{\t_{n+1}}\hatH_{\t_{n+1}}(X_{n+1}) \eqsp, \\
  R_{n+1}^{(2)} &= P_{\t_{n+1}}\hatH_{\t_{n+1}}(X_{n+1}) -
  P_{\t_{n}}\hatH_{\t_{n}}(X_{n+1}) \eqsp.
\end{align*}
Observe that $(M_n)_{n \geq 1}$ is a martingale-increment such that $\sum_n
\gamma_n^2 \PE\left[|M_n|^2 \right] < \infty$ by
A\ref{hyp:stepsize}\ref{hyp:stepsize:item3} and
Lemma~\ref{lem:RegularitePoisson}. Hence (see \textit{e.g.}~\cite[Corollary
2.2]{hall:heyde:1980}) $\limsup_{k} \ \sup_{\ell \geq k}\left|\sum_{n=k}^\ell
  \gamma_{n+1} M_{n+1}\right| = 0$ almost surely.

Consider now $\sum_{n=k}^\ell \gamma_{n+1} R_{n+1}^{(1)}$. 
Note that $R_{n+1}^{(1)}$ is a telescopic sum. We therefore resort to 
Abel's transform, and obtain
\[
\sum_{n=k}^\ell \gamma_{n+1} R_{n+1}^{(1)} = \gamma_{k+1} P_{\t_{k}}
\hatH_{\t_{k}}(X_{k}) - \gamma_{\ell+1} P_{\t_{\ell+1}}
\hatH_{\t_{\ell+1}}(X_{\ell+1}) + \sum_{n=k+1}^{\ell} \left( \gamma_{n+1}
  -\gamma_{n} \right) P_{\t_n} \hatH_{\t_n}(X_n) \eqsp.
\]
In view of Lemma~\ref{lem:RegularitePoisson} and
A\ref{hyp:stepsize}\ref{hyp:pasdecroiss}, there exists a constant $C$ such that
for any $\ell \geq k$,
\[
\left| \sum_{j=k}^\ell \gamma_{j+1} R_{j+1}^{(1)}\right| \leq C \left( \sup_{j \geq
    k} \gamma_{j+1} + \sum_{j = k+1}^\ell \left|\gamma_{j+1} -\gamma_{j} \right|
\right)  \leq 2 C \gamma_{k+1}\eqsp.
\]
Assumption~A\ref{hyp:stepsize}\ref{hyp:pasdecroiss} then implies
that $\limsup_{k} \ \sup_{\ell \geq k}\left|\sum_{n=k}^\ell \gamma_{n+1}
  R_{n+1}^{(1)}\right| = 0$ almost surely.

We finally turn to $\sum_{n=k}^\ell \gamma_{n+1} R_{n+1}^{(2)}$.
Lemma~\ref{lem:RegularitePoisson} combined with assumption~A\ref{hyp:kernel} imply,
after manipulations similar to the ones used in the proof of 
Corollary~\ref{coro:RegulariteKernel}, that there
exists a constant $C''$ such that for any $j \geq 0$, 
\[
\sup_{\Xset} \left|
P_{\t_{j+1}}\hatH_{\t_{j+1}} - P_{\t_{j}}\hatH_{\t_{j}} \right| \leq
C'' \gamma_{j+1} \eqsp. 
\]
Then, by assumption A\ref{hyp:stepsize}\ref{hyp:stepsize:item3}, $\sum_n
\gamma_n \left|R_n^{(2)}\right|$  exists almost-surely, which implies that
\[
\PP\left( \limsup_{k} \ \sup_{\ell \geq k}\left|\sum_{n=k}^\ell \gamma_{n+1}
  R_{n+1}^{(2)}\right| = 0 \right) = 1 \eqsp.
\]
This gives the claimed result.
\end{proof}

The proof of Theorem~\ref{theo:cvg} is now concluded by resorting
to~\cite[Theorems~2.2 and~2.3.]{andrieu:moulines:priouret:2005}.
Theorem~\ref{recomp} and Propositions~\ref{prop:Lyapunov}
and~\ref{prop:remainder:cvg} prove that the assumptions of these theorems hold.

\subsection{Proof of Theorem~\ref{theo:ergoLLN:X}}
\label{proof:theo:ergoLLN:X}

\begin{proof}[Proof of \eqref{theo:ergoLLN:X:item1}] 
The proof is based on~\cite[Theorem 2.1]{fort:moulines:priouret:2011}. 
We successively check the assumptions required to apply this result.
First, the condition~A1 of~\cite{fort:moulines:priouret:2011} holds since 
$\pi_\t P_\t = \pi_\t$ by assumption~A\ref{hyp:kernel}.

We now turn to condition A2 in~\cite{fort:moulines:priouret:2011}.
Fix $\eps >0$. By Proposition~\ref{prop:unifergo},
\[
\PE\left[ \| P_{\t_{n-r_\eps}}^{r_\eps}(X_{n-r_\eps}) -
    \pi_{\t_{n-r_\eps}} \, d\lambda \|_\tv \right] \leq \eps
\]
by choosing $r_\eps > \ln( \eps/2 ) / \ln(1-\rho)$. The constant sequence $r_\eps(n) = r_\eps$ is non-increasing
and obviously satisfies $r_\eps(n) /n \to0$.  Furthermore, by
Corollary~\ref{coro:RegulariteKernel}, there exists a constant $C$ (independent
of $\eps$) such that
\begin{align*}
  \sum_{j=1}^{r_\eps-1} & \PE\left[ \sup_{x \in \Xset} \| P_{\t_{n
        -r_\eps +j}}(x,\cdot)  - P_{\t_{n -r_\eps }}(x,\cdot)   \|_{\tv}\right]  \\
&  \ \leq \sum_{j=1}^{r_\eps-1} \sum_{\ell=0}^{j-1} \PE\left[ \sup_{x \in
      \Xset} \| P_{\t_{n -r_\eps +\ell+1}}(x,\cdot) -
    P_{\t_{n -r_\eps -\ell }}(x,\cdot) \|_{\tv}\right]
  \leq C \ \sum_{j=1}^{r_\eps-1} \sum_{\ell=0}^{j-1} \gamma_{n -r_\eps
    +\ell+1} \mathop{\longrightarrow}_{n\to\infty} 0
\end{align*}
since the last sum is composed of a finite number of terms, each of them going to~0 in view of assumption~A\ref{hyp:stepsize}\ref{hyp:pasdecroiss}
This ensures that condition A2 in~\cite{fort:moulines:priouret:2011} holds.

Finally, Theorem~\ref{theo:cvg} and Lemma~\ref{lem:RegularitePi} imply that
$\lim_n \int_\Xset f(x) \pi_{\t_n}(x) \, \lambda(dx) = \int_\Xset f(x) \pi_{\t_\star}(x) \, \lambda(dx)$ almost-surely.
\end{proof}

\begin{proof}[Proof of~\eqref{theo:ergoLLN:X:item2}] 
We check the conditions of \cite[Theorem 2.7]{fort:moulines:priouret:2011}.
First, the condition A3 of \cite{fort:moulines:priouret:2011} holds with $V=1$
(with the notation of~\cite{fort:moulines:priouret:2011}) in view of
Proposition~\ref{prop:unifergo}. Observe indeed that since $V=1$, $P_\t V(x) = 1 = c
+ (1-c)$ for any $c \in (0,1)$ thus showing the drift inequality. In addition,
by Proposition~\ref{prop:unifergo}, $P_\t(x, A) \geq \rho \int_A \pi_\t(x) \, 
\lambda(dx)$ for any $x \in \Xset, A \in \Xsigma$: this implies (i)
the minorization condition on the kernel $P_\t$; (ii) $\pi_\t \, 
d\lambda$ is an irreducible measure and $P_\t$ is psi-irreducible;
(iii) and $P_\t$ is strongly aperiodic since $\Xset$ is small for
$P_\t$ (see \cite[Section 5.4.3]{meyn:tweedie:2009}).

In addition, by Corollary~\ref{coro:RegulariteKernel}, there exists a constant
$C$ such that
\[
\sum_{k \geq 1} \frac{1}{k} \ \sup_{x \in \Xset} \|P_{\t_k}(x,\cdot) -
P_{\t_{k-1}}(x,\cdot) \|_\tv \leq C \, \sum_{k \geq 1} \frac{\gamma_k}{k} 
\leq C \sum_{k \geq 1} \left(\gamma_k^2 + \frac{1}{k^2}\right) < \infty
\]
by A\ref{hyp:stepsize}\ref{hyp:stepsize:item3}. This shows that the condition A4 of
\cite{fort:moulines:priouret:2011} holds.  Finally, the condition A5 of
\cite{fort:moulines:priouret:2011} is trivially satisfied in the case under consideration 
(since $V=1$ with the notation of~\cite{fort:moulines:priouret:2011}).
\end{proof}

\subsection{Proof of Theorem~\ref{theo:ergoLLN:Stratified}}
\label{proof:theo:ergoLLN:Stratified}

\begin{proof}[Proof of \eqref{theo:ergoLLN:Stratified:item1}]
We write
\begin{align*}
  \PE\left[ \sum_{i=1}^d \t_n(i) \ f(X_n) \ \un_{\Xset_i}(X_n) \right] &=
  \PE\left[ \sum_{i=1}^d \{ \t_n(i) - \t_\star(i) \} \ f(X_n) \ 
    \un_{\Xset_i}(X_n) \right]  \\
  &\quad + \sum_{i=1}^d \t_\star(i) \, \PE\left[ f(X_n) \ \un_{\Xset_i}(X_n) \right]
  \eqsp.
\end{align*}
Theorem~\ref{theo:cvg} and the dominated convergence theorem imply that the
first term in the right-hand side converges to zero. By Theorem~\ref{theo:ergoLLN:X}, the
second term converges to
\[
\sum_{i=1}^d \t_\star(i) \int_{\Xset_i} f \, \pi_{\t_\star} \, d \lambda 
= \frac{1}{d}\sum_{i=1}^d \t_\star(i) \int_{\Xset_i} f \frac{\pi}{\t_\star(i)}
\, d \lambda =\frac{1}{d} \int_\Xset f \, \pi \, d \lambda \eqsp,
\]
which gives the claimed result.
\end{proof}

\begin{proof}[Proof of \eqref{theo:ergoLLN:Stratified:item2}]
We write
\[
\frac{1}{d} \, \mathcal{I}_n(f) = \frac1n \sum_{i=1}^d \left( \t_n(i) - \t_\star(i)
\right) \sum_{k=1}^nf(X_k) \un_{\Xset_i}(X_k) + \sum_{i=1}^d \t_\star(i) 
\left[ \frac1n \sum_{k=1}^nf(X_k) \un_{\Xset_i}(X_k)\right]
\eqsp.
\]
We have
\[
\frac{1}{n}\left| \sum_{i=1}^d \ \left( \t_n(i) - \t_\star(i) \right) \ \sum_{k=1}^nf(X_k)
  \un_{\Xset_i}(X_k) \right| \leq \sup_\Xset |f| \ \sum_{i=1}^d \ \left|
  \t_n(i) - \t_\star(i) \right|
\]
and the right-hand side converges to zero almost-surely by Theorem~\ref{theo:cvg}. In
addition, by Theorem~\ref{theo:ergoLLN:X},
\[
\frac{1}{n}\sum_{k=1}^nf(X_k) \un_{\Xset_i}(X_k) \aslim\int_{\Xset_i} f \, \pi_{\t_\star} \, d\lambda= \frac{1}{d \ \t_\star(i)} \int_{\Xset_i} f \, \pi \, d\lambda \eqsp.
\]
This concludes the proof.
\end{proof}

\subsection{Proof of Theorem~\ref{theo:CLT:theta}}
\label{proof:theo:CLT:theta}
We write $ H(X_{n+1},\t_n) = h(\t_n) + e_{n+1} + r_{n+1}$ with
\[
e_{n+1} \eqdef  \hatH_{\t_n}(X_{n+1}) - P_{\t_n} \hatH_{\t_n}(X_n) \eqsp, \qquad 
r_{n+1} \eqdef  P_{\t_n}\hatH_{\t_n}(X_{n})  -
P_{\t_{n}}\hatH_{\t_{n}}(X_{n+1})\eqsp.
\]
The result follows from \cite[Theorem 1.1]{fort:2012}. We check below the
various conditions necessary to apply this theorem,
and finally establish the expression of the limiting variance. Notice
that, in our context, the event $\{ \lim_q \t_q = \t_\star \}$
has probability $1$ (by Theorem~\ref{theo:cvg}), and thus, the multiplicative factor $\un_{\{
  \lim_q \t_q = \t_\star \}}$ which appears in all the conditions
in~\cite[Theorem 1.1]{fort:2012} can be omitted below.

\begin{proof}[Condition C1] 
  The vector $\t_\star$ is a zero of the mean field $h$ in view
  of~(\ref{eq:def:mean-field:2}), and $h$ is twice continuously differentiable
  in a neighborhood of $\t_\star$ under A\ref{hyp:targetpi}. From
  (\ref{eq:def:mean-field:2}), it is easily checked that $\nabla h(\t_\star) =
  - d^{-1} \mathrm{I}$, so that $\nabla h(\t_\star)$ is a Hurwitz matrix. This
  gives condition C1 of \cite{fort:2012}.
\end{proof}

\begin{proof}[Condition C2] 
  By definition, $\{e_n, n\geq 0 \}$ is a martingale increment and by
  Lemma~\ref{lem:RegularitePoisson}, it is bounded, so that conditions C2a and
  C2b of \cite{fort:2012} follow with $\un_{\mathcal{A}_{m,k}} =
  \un_{\mathcal{A}_m}$ equal to the constant function $\un$.
We now consider C2c. A simple computation shows that 
$\PE\left[e_{k+1} e_{k+1}^T \vert \F_{k} \right] = \Xi(X_{k},\t_{k})$ with
\[
\Xi(x,\t) \eqdef \int_\Xset P_\t(x,dy) \ \hatH_\t(y) \hatH_\t(y)^T - \left( \int_\Xset
  P_\t(x,dy) \hatH_\t(y)\right)\left( \int_\Xset P_\t(x,dy) \hatH_\t(y)\right)^T
\eqsp.
\]
We introduce the function $\widehat \Xi_\t$ solution of the Poisson equation
\[
\widehat \Xi_\t(x) - P_\t \widehat \Xi_\t(x) = \Xi(x, \t_\star) - \int_\Xset \Xi(x, \t_\star)
\, \pi_\t(x) \, \lambda(dx) \eqsp.
\]
Since $\sup_{x}|\Xi(x,\t_\star)| \leq \sup_{\t, x} \left|\hatH_\t(x)\right|^2 <
\infty$ (see~Lemma~\ref{lem:RegularitePoisson}), by
Proposition~\ref{prop:unifergo} and \cite[Section~17.4.1]{meyn:tweedie:2009},
such a function exists and $\sup_{\t \in \Theta, x \in \Xset} \left|\widehat
  \Xi_\t(x)\right| < \infty$.  Using the previous equality with $x$ replaced by
$X_{k}$ and $\t$ replaced by $\t_{k-1}$, we obtain
\[
\begin{aligned}
  \Xi(X_{k}, \t_{k}) - & \int_\Xset \Xi(x,\t_\star) \, \pi_{\t_\star}(x) \lambda(dx) =
  \left\{ \Xi(X_{k}, \t_{k}) - \Xi(X_{k}, \t_\star)
  \right\} \\
  & \quad + \left( \int_\Xset \Xi(x,\t_\star) \, \pi_{\t_{k-1}}(x) \lambda(dx) - \int_\Xset
    \Xi(x,\t_\star) \, \pi_{\t_\star}(x) \lambda(dx) \right)  \\
  & \quad + \left( \widehat \Xi_{\t_{k-1}}(X_k) - P_{\t_{k}}\widehat \Xi_{\t_{k}}(X_{k})
  \right) + \left( P_{\t_{k}}\widehat \Xi_{\t_{k}}(X_{k}) - P_{\t_{k-1}}\widehat
    \Xi_{\t_{k-1}}(X_{k})\right).
\end{aligned}
\]
The terms on the right-hand side should be small.
This motivates therefore the following decomposition:
$\PE\left[e_{k+1} e_{k+1}^T \vert \F_{k} \right] = U_\star + D_{k}^{(1)} +
D_{k}^{(2)}$ with
\[
U_\star  \eqdef \int_\Xset  \Xi(x,\t_\star) \, \pi_{\t_\star}(x) \lambda(dx)  \eqsp, \qquad
D_{k}^{(2)} \eqdef \widehat \Xi_{\t_{k-1}}(X_k) - P_{\t_{k}}\widehat
\Xi_{\t_{k}}(X_{k}),
\]
and 
\begin{equation}
\label{eq:def_D1}
\begin{aligned}
D_k^{(1)} = \Big( \Xi(X_{k}, \t_{k}) - \Xi(X_{k}, \t_\star) \Big) & + \int_\Xset
  \Xi(x,\t_\star) \left( \pi_{\t_{k-1}}(x) - \pi_{\t_\star}(x) \right)
  \lambda(dx) \\
  & + \left( P_{\t_{k}}\widehat \Xi_{\t_{k}}(X_{k}) - P_{\t_{k-1}}\widehat
    \Xi_{\t_{k-1}}(X_{k})\right).
\end{aligned}
\end{equation}
We first prove that 
\begin{equation}
  \label{eq:proof:theo:CLT:tool2}
 \lim_{n\to\infty}  \gamma_n  \, \PE\left[ \left| \sum_{k=1}^n D_k^{(2)}\right|
 \right] = 0 \eqsp.
\end{equation}
To this end, we decompose this sum as
\begin{align*}
  \sum_{k=1}^n D_k^{(2)} & = \sum_{k=1}^n \left\{ \widehat\Xi_{\t_{k-1}}(X_k) -
    P_{\t_{k-1}}\widehat \Xi_{\t_{k-1}}(X_{k-1}) \right\} + \sum_{k=1}^n
  \left\{P_{\t_{k-1}}\widehat \Xi_{\t_{k-1}}(X_{k-1}) - P_{\t_{k}}\widehat
    \Xi_{\t_{k}}(X_{k})  \right\} \\
  &= \sum_{k=1}^n \left\{\widehat \Xi_{\t_{k-1}}(X_k) - P_{\t_{k-1}}\widehat
    \Xi_{\t_{k-1}}(X_{k-1}) \right\} + P_{\t_{0}}\widehat \Xi_{\t_{0}}(X_{0}) -
  P_{\t_{n}}\widehat \Xi_{\t_{n}}(X_{n}) \eqsp.
\end{align*}
Since $\sup_{\t \in \Theta,x \in \Xset} \left| \widehat \Xi_\t(x) \right| <
\infty$, the last two terms on the right-hand side of the above equality are
such that $\gamma_n \PE\left[\left| P_{\t_{0}}\widehat \Xi_{\t_{0}}(X_{0}) -
    P_{\t_{n}}\widehat \Xi_{\t_{n}}(X_{n}) \right|\right] \to 0$. The first
term is the sum of bounded martingale increments: by \cite[Theorem
2.10]{hall:heyde:1980}, there exists a constant $C$ such that
\begin{multline*}
  \sup_n \PE\left[\left| \sum_{k=1}^n \left\{\widehat \Xi_{\t_{k-1}}(X_k) -
        P_{\t_{k-1}}\widehat \Xi_{\t_{k-1}}(X_{k-1})  \right\}\right| \right]  \\
  \leq \sup_n \left( \PE\left[\left| \sum_{k=1}^n \left\{ \widehat\Xi_{\t_{k-1}}(X_k) -
          P_{\t_{k-1}}\widehat \Xi_{\t_{k-1}}(X_{k-1}) \right\}\right|^2 \right]
  \right)^{1/2} \leq C \sqrt{n} \eqsp.
\end{multline*}
Since $\lim_n \gamma_n \sqrt{n} = 0$, this concludes the proof of
(\ref{eq:proof:theo:CLT:tool2}). We now prove that 
\begin{equation}
  \label{eq:proof:CLT:theta:tool3}
D_k^{(1)} \aslim 0 \eqsp.
\end{equation}
We start with the first term in the definition~\eqref{eq:def_D1} of $D_k^{(1)}$.
Under A\ref{hyp:targetpi}, there exist $\eta >0$ and a random variable $N$, almost surely 
finite, such that
\begin{equation}
  \label{eq:proof:theo:CLT:tool1}
  \inf_{n \geq N} \inf_i \{ \t_n(i)
\wedge \t_\star(i) \} \geq \eta \qquad \mathrm{a.s.}
\end{equation}
By Lemma~\ref{lem:RegulariteKernel} and the property $\sup_{\t \in \Theta, x
  \in \Xset} \left| \hatH_\t(x) \right| < \infty$ (see
Lemma~\ref{lem:RegularitePoisson}), there exists a constant~$C$ such that for
any $x \in \Xset$, $\t, \t' \in \Theta$,
\begin{equation}
  \label{eq:proof:theo:CLT:tool4}
  \left| \Xi(x,\t) - \Xi(x,\t') \right| \leq C \left( \sup_{y\in\Xset}\ \left| \hatH_\t(y) -
  \hatH_{\t'}(y) \right| + \frac{\left| \t - \t'\right|}{\inf_i \t(i) \wedge \t'(i)}\right)\eqsp.
\end{equation}
By (\ref{eq:proof:theo:CLT:tool1}) and Lemma~\ref{lem:RegularitePoisson}, there
exists a random variable $Z$ almost-surely finite such that 
\[
\sup_{x \in \Xset} \left| \Xi(x,\t_k) - \Xi(x,\t_\star) \right|
\leq Z \, | \t_k - \t_\star |,  \qquad \mathrm{a.s.}
\]
and the right-hand side converges to zero almost surely.  For the second term
in the definition~\eqref{eq:def_D1} of $D_k^{(1)}$, we use
Lemma~\ref{lem:RegularitePi}, (\ref{eq:proof:theo:CLT:tool1}) and the bound
$\sup_{\t \in \Theta, x \in \Xset} | \Xi(x,\t)| < \infty$ to obtain the
existence of a random variable $Z$ almost-surely finite such that for any $k
\geq 1$,
\begin{align*}
  \left| \int_\Xset \Xi(x,\t_\star) \ \left\{ \pi_{\t_{k}}(x) - \pi_{\t_\star}(x)
    \right\} \lambda(dx)\right| & 
    \leq \sup_{\t \in \Theta, x \in \Xset} | \Xi(x,\t)| \ \| \pi_{\t_{k}}d\lambda -
  \pi_{\t_\star}d\lambda \|_\tv \\ & \leq Z \, | \t_{k} -
  \t_\star | \qquad \mathrm{a.s.}
\end{align*}
The right-hand side converges to zero almost surely.  Finally, for the third
term in the definition~\eqref{eq:def_D1} of $D_k^{(1)}$, by Lemma
\ref{lem:RegularitePoisson}, it can be proved that there exists a random
variable $Z$ almost-surely finite such that for any $k \geq 1$,
\[
\sup_{x \in \Xset} \left| P_{\t_{k}}\widehat \Xi_{\t_{k}}(x) -
  P_{\t_{k-1}}\widehat \Xi_{\t_{k-1}}(x) \right| \leq Z \, |\t_k -
\t_{k-1} | \qquad \mathrm{a.s.}
\]
and the right-hand side converges to zero almost surely. 
This concludes the proof of (\ref{eq:proof:CLT:theta:tool3}) and the proof of
the condition C2c of \cite{fort:2012}.
\end{proof}

\begin{proof}[Condition C3]
We write $r_{n+1} = r_{n+1}^{(1)} + r_{n+1}^{(2)}$ with
\begin{align*}
  r_{n+1}^{(1)} \eqdef P_{\t_{n+1}} \hatH_{\t_{n+1}}(X_{n+1}) - P_{\t_{n}}
  \hatH_{\t_{n}}(X_{n+1}) \eqsp, \quad r_{n+1}^{(2)}  \eqdef P_{\t_{n}}
  \hatH_{\t_{n}}(X_{n}) -P_{\t_{n+1}} \hatH_{\t_{n+1}}(X_{n+1}) \eqsp.
\end{align*}
By Lemma~\ref{lem:RegularitePoisson} and (\ref{eq:proof:theo:CLT:tool1}), there
exists a random variable $Z$ almost-surely finite such that
$|r_{n+1}^{(1)}| \leq Z \, |\t_{n+1} - \t_n| \leq Z
\gamma_{n+1} $ almost-surely. Moreover,
\[
\sqrt{\gamma_n} \ \PE\left[ \left|\sum_{k=1}^n r_k^{(2)} \right| \right] 
\leq \sqrt{\gamma_n} \, \PE\left[ \left|P_{\t_0}
  \hatH_{\t_0}(X_0) - P_{\t_n} \hatH_{\t_n}(X_n) \right| \right] \leq 2
\sqrt{\gamma_n} \sup_{x, \t} \left| \hatH_\t(x) \right| \eqsp,
\]
where the supremum in the right-hand side is finite by Lemma~\ref{lem:RegularitePoisson}.
  This concludes the proof of condition~C3 of \cite{fort:2012}.
\end{proof}

\begin{proof}[Condition C4] 
This condition is precisely assumptions~A\ref{hyp:stepsize}\ref{hyp:stepsize:item2}-\ref{hyp:stepsize:item3} and~A\ref{hyp:CLT}.
\end{proof}

\begin{proof}[Limiting variance]
  In case (i) of assumption~A\ref{hyp:CLT}, 
the limiting variance $\Sigma$ solves the equation
$\Sigma \nabla h(\t_\star)^{T} + \nabla h(\t_\star) \Sigma = - U_\star$. Since $\nabla
h(\t_\star) = - d^{-1} {\rm Id}$, it holds $\Sigma = (d /2)  U_\star$. In case~(ii), the limiting
variance solves the equation $\Sigma ({\rm Id} + 2 \gamma_\star\nabla
h(\t_\star)^{T}) + ({\rm Id}
+ 2 \gamma_\star \nabla h(\t_\star)) \Sigma = - 2 \gamma_\star U_\star$, so that 
$(d-2\gamma_\star)\Sigma  =- \gamma_\star d\, U_\star$.
\end{proof}


\subsection*{Acknowledgements}
We thank Prof. Eric Moulines for stimulating discussions.  This work is
supported by the French National Research Agency under the grants
ANR-09-BLAN-0216-01 (MEGAS) and ANR-08-BLAN-0218 (BigMC).

\bibliographystyle{amsplain}

\end{document}